\title[A polynomial bound on solutions of quadratic equations]{
A polynomial bound on solutions of quadratic equations in free groups}
\author[]{Igor Lysenok}
\address{Steklov Mathematical Institute, Moscow, Russia}
\email{igor.lysenok@gmail.com}
\thanks{
The first author has been supported by the Russian Foundation for Fundamental Research
}
\author[]{Alexei Myasnikov}
\address{Department of Mathematics, Stevens Institute of Technology,
Hoboken, USA} 
\email{amiasnikov@gmail.com}
\newtheorem{theorem}{Theorem}[section]
\newtheorem{lemma}[theorem]{Lemma}
\newtheorem{proposition}[theorem]{Proposition}
\newtheorem{corollary}[theorem]{Corollary}
\newtheorem*{maincorollary}{Corollary \ref{cor:standard-reduction-general}}
\theoremstyle{definition}
\newtheorem{definition}[theorem]{Definition}
\newtheorem{example}[theorem]{Example}
\theoremstyle{remark}
\DeclareMathOperator\Aut{Aut}
\DeclareMathOperator\End{End}
\DeclareMathOperator\Var{Var}
\DeclareMathOperator\Stab{Stab}
\DeclareMathOperator\Hom{Hom}
\DeclareMathOperator\Sol{Sol}
\DeclareMathOperator\im{Im}
\newcommand\set[1]{\{ #1 \}}
\newcommand\setof[2]{\{ #1 \;|\; #2 \}}
\let\al\alpha
\let\be\beta
\let\ga\gamma
\let\de\delta
\let\ep\varepsilon
\let\la\lambda
\let\th\theta
\let\si\sigma
\let\om\omega
\let\Ga\Gamma
\let\De\Delta
\let\Th\Theta
\newcommand\Reals{{\mathbb R}}
\let\eset\varnothing
\let\bd\partial
\let\ti\tilde
\let\sm\setminus
\def\theenumi{(\roman{enumi})}  % making items numbering as (i), (ii), ...
\def\p@enumi{\thelemma}
\let\@savedlabel\label
\def\label#1{\@savedlabel{#1}\ifnum\@listdepth=1%
\protected@edef\@currentlabel{\theenumi}\@savedlabel{#1-it@m}\fi}
\begin{document}

\begin{abstract}
We provide polynomial upper bounds on the size of a shortest solution for quadratic equations in a free group.
A similar bound is given for parametric solutions in the description of solutions sets of quadratic equations 
in a free group.
\end{abstract}

\maketitle

\section{Introduction} \label{sec:intro}

Let $G$ be a group and $X$ a countable set of {\em variables}. An {\em equation in $G$}
is an element $E$ of the free product $G \ast F_X$ where $F_X$ is the free group freely generated by ~$X$. 
Usually we write equations in the classical form  $E = 1$
where $E$ is a word over $G \cup X^{\pm1}$ representing a reduced product in $G \ast F_X$.
Elements of $G$ occurring in $E$ are called {\em coefficients} of an equation $E=1$.
A {\em solution} of $E = 1$ in $G$  is a homomorphism 
$\alpha: G \ast F_{X} \to G$  such that $g^\alpha  = g$ for every $g \in G$ 
(so-called {\em $G$-homomorphism}) and $E^\alpha = 1$.  
Sometimes it is convenient to consider the quotient $G_E = G \ast F_X / (E = 1)$
of $G \ast F_X$ 
modulo the normal subgroup generated by~$E$, called the {\em equation group of} $E= 1$. 
In this case solutions of $E = 1$ in $G$ are precisely 
$G$-homomorphisms $G \ast F_X \to G$. (Observe that if $E = 1$ has a solution
then $G$ embeds into ~$G_E$.)   
In general, there are two natural
problems concerning equations from  a given class ~${\mathcal E}$. The first one
is  the famous Diophantine problem: does   there exist an algorithm to check
whether or not  a given equation from ${\mathcal E}$ has a solution in a group
$G$. The second problem is to get an 
effective
description of the set of
solutions of an equation in ~${\mathcal E}$.

A word $E$ and an equation $E = 1$ are termed  {\em quadratic} if every variable $x \in X$
occurring in $E$ (as $x$ or $x^{-1}$) occurs precisely twice.  
Quadratic equations form a very special class among equations of a general form in groups.
However, they play 
an important role in several areas of mathematics.  
This is not very surprising  since
 quadratic equations are naturally related to the topology of 
compact
surfaces (see for example  \cite{Culler-1981,Olshanskii-1989} and Section \ref{sec:surfaces} below).
%\marginpar{Need a good section on this ?}
Quadratic equations 
groups
naturally 
appear
in  JSJ-decompositions of groups (via
QH subgroups) and  
as group actions of dynamical systems  (via interval
exchange). 
Moreover, quadratic equations play a fundamental part in
general theory of equations in groups and algebraic (Diophantine) geometry over
groups \cite{BMR-1998,KM-IrredI,KM-IrredII,KM-ift,Sela-2001-2006}. 
It has been shown by Kharlampovich and Myasnikov \cite{KM-IrredI,KM-IrredII}
that every system of equations in a free group is rationally equivalent to finitely many
systems in non-degenerate triangular quasi-quadratic form (NTQ), which gives  a
precise analog of the elimination theory from the classical algebraic geometry,
with proper extension theorems and nice algorithmic properties (for details see
surveys \cite{KM-survey1,KM-survey2}).  NTQ systems are crucial in Implicit
Function Theorems in free groups \cite{KM-ift} and Tarski problems \cite{KM-Tarski,Sela-2001-2006}.

The theory  of quadratic equations in a free group $F$ goes back to the works
of Lyndon~\cite{Lyndon-1959}  and  Malcev \cite{Malcev1962}, who described,
correspondingly,   all solutions of the equation $x^2y^2z^2 = 1$  and $[x,y] =
[a,b]$ (here $x,y,z \in X$ are variables and $a, b \in F$ are the generators of $F$). 
Malcev anticipated several modern techniques,  
introducing automorphic equivalence of solutions and focusing on minimal solutions in each automorphic orbit. 
In 
\cite{Wicks-1972}
 Wicks gave a decision
algorithm for the Diophantine problem for   equations of the type 
$W = g$ 
$(g \in F)$. He showed 
that
the problem of
solving 
such
an equation can be effectively reduced to solving finitely many particular
equations in a free monoid with involution (via ``Wicks forms'').  
% Note that  the number of Wick's forms is exponential in the number of
% variables of a given quadratic equation (more precise estimates are due to
% Vdovina  \cite{Vdovina-1995}), so this method does not give direct  efficient
% algorithms.  

 In \cite{ComerfordEdmunds-1981}, Comerford and Edmunds proved that the
Diophantine problem for arbitrary quadratic equations in a free group $F$ is
decidable. 
  Later  Comerford and Edmunds ~\cite{ComerfordEdmunds-1989} and
Grigorchuk and Kurchanov \cite{Grigorchuk-Kurchanov-1989}  completely described
solution sets of quadratic equations in free groups (the result will
be formulated below in this section).
  
  In 1982,  Makanin \cite{Makanin-1982}
proved that
 the Diophantine problem in free groups
is solvable,
and a few years later Razborov \cite{Razborov-1987} gave a description of
solutions of systems of equations in free groups. These two
very influential papers shaped the modern theory of equations in groups. 
Note that the description of solutions sets of quadratic equations given in 
\cite{ComerfordEdmunds-1989,Grigorchuk-Kurchanov-1989} is a very special case of 
Razborov's description for general equations.
 
 Techniques for solving equations in free and related groups were instrumental
in solution of some other decision problems in group theory: for example, the
isomorphism problem in hyperbolic groups \cite{Sela-1995, Dahmani-Guirardel-2010}, limit groups
\cite{KM-iso}, and toral relatively hyperbolic groups \cite{Dahmani-Groves-2008}. 
 
 In view of applications, the principal question concerning equations in groups
is the time complexity of decision algorithms. It has been shown by
Bormotov, Gilman, and Myasnikov ~\cite{BGM-2009} that one-variable equations in a free group
admit polynomial time decision algorithms.
Ol'shaskni\u\i\  \cite{Olshanskii-1989} 
and  Grigorchuk and Kurchanov \cite{Grigorchuk-Kurchanov-1989}
%and Grigorchuk and Lysenok \cite{Grigorchuk-Lysenok-1992} 
proved that if  the number of variables 
is fixed then the Diophantine problem for quadratic equations in free groups 
has a decision algorithm  polynomial in the sum of the lengths of the
coefficients. However, this is as far as one can go in polynomial time. 
First, Diekert and Robson showed in \cite{Diekert-Robson-1999} that the Diophantine
problem for quadratic equations in free monoids (semigroups) is NP-hard. Then
Kharlampovich, Lysenok, Myasnikov and Touikan proved that the Diophantine
problem for quadratic equations in free groups is precisely NP-complete \cite{KLMT}. 
Nevertheless, a few important questions remained to be open,  above all, 
the question whether decidable quadratic equations in free groups have solutions
polynomially bounded in the size of the equation.

%Here we answer this and some other open complexity questions on quadratic
%equations in free groups. 
The affirmative answer to this question is given by the following theorem. Here a quadratic
word $Q$ is {\em orientable} if every variable $x$ occurring in $Q$ occurs in $Q$ with two 
opposite exponents as $x$ and $x^{-1}$ and {\em non-orientable} otherwise, that is, if a variable
occurs in $Q$ twice with the same exponent $+1$ or $-1$.

\begin{theorem} \label{thm:sol-bound}
Let $Q$ be a quadratic word. If the equation $Q=1$ is solvable in a free group ~$F_A$ 
then there exists a solution $\al$ such that for any variable $x$, 
$$
  |x^\al| \le 
  \begin{cases}
    N n(Q) c(Q) & \text{if $Q$ is orientable}, \\
    N n(Q)^2 c(Q) & \text{if $Q$ is non-orientable}, \\
  \end{cases}
$$
for some constant $N$. Here $n(Q)$ denotes the total number of variables in $Q$ and $c(Q)$ the 
total length of coefficients occurring in $Q$. 
One can take $N = 
40
$ for orientable $Q$ and $N=
150
$ for non-orientable ~$Q$.

If $Q$ is standard orientable or semi-standard non-orientable 
(see Definitions \ref{def:standard} and \ref{def:semi-standard})
then there exists $\al$ with a better bound
$$
  |x^\al| \le 
  \begin{cases}
    N c(Q) & \text{if $Q$ is orientable}, \\
    N n(Q) c(Q) & \text{if $Q$ is non-orientable}, \\
  \end{cases}
$$
with $N=
8
$ and $N=
36
$ respectively.
\end{theorem}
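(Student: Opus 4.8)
The overall plan is to first reduce an arbitrary solvable quadratic equation to standard orientable, resp.\ semi-standard non-orientable, form, and then to prove the sharp bound in that form by a geometric analysis of a minimal solution. The reduction is exactly the content of Corollary~\ref{cor:standard-reduction-general} (bringing $Q$ to standard/semi-standard form by an automorphism of $F_X$ and transporting solutions back), which is what produces the extra factor $n(Q)$, resp.\ $n(Q)^2$, in the first displayed estimate; so the heart of the matter is the second estimate. Fix a standard orientable $Q=\prod_{i=1}^{g}[x_i,y_i]\prod_{j=1}^{m}z_j^{-1}c_jz_j$ (cyclically reducing the $c_j$ first, which only decreases $c(Q)$). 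By the surface dictionary of Section~\ref{sec:surfaces}, a solution $\al$ of $Q=1$ in $F_A$ is the same datum as a map $f\colon \Si_{g,m}\to R_A$ from the orientable surface of genus $g$ with $m$ boundary components to the rose on $|A|$ petals, with the $j$-th boundary loop sent to the cyclic word $c_j$; the images $x_i^\al,y_i^\al,z_j^\al$ are read along fixed loops and arcs in $\Si_{g,m}$ carrying the corresponding generators of $\pi_1$.

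I would choose $\al$ so that $f$ is in \emph{minimal position}: transverse to the edge-midpoints of $R_A$, with $\Ga:=f^{-1}(\text{midpoints})$ --- a disjoint union of simple closed curves and simple arcs with endpoints on $\bd\Si_{g,m}$ --- having the fewest components, together with a secondary minimization to break ties. Then $|x_i^\al|,|y_i^\al|,|z_j^\al|$ are the geometric intersection numbers of the fixed loops with $\Ga$, and $\bd\Si_{g,m}$ meets $\Ga$ in exactly $c(Q)=\sum_j|c_j|$ points. The next step is to record the configurations ruled out by minimality: no component of $\Ga$ bounds a disk or is boundary-parallel; no arc cuts off a disk together with a subarc of $\bd\Si_{g,m}$; no pair of arcs with a common generator label can be cancelled across a complementary region; and, since $\al$ has minimal total length among solutions, no complementary region can be ``collapsed'' so as to shorten some $x_i^\al$. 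Each region of $\Si_{g,m}\setminus\Ga$ maps to a single vertex of $R_A$ and, read around its frontier, spells a word trivial in $F_A$; the forbidden patterns force that boundary word to be reduced and free of redundant pairs of incident arcs.

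Combining these local constraints with the Euler characteristic identity for $\Si_{g,m}$ cut along $\Ga$ --- where the handles in excess of those actually used by $f$ can be taken to carry no arcs of $\Ga$ in a minimal position, so that the count is governed by the $c(Q)$ points on $\bd\Si_{g,m}$ rather than by $g$ --- I would bound the number of arcs of $\Ga$, and hence every intersection number, by a constant multiple of $c(Q)$. Pushing the constant down to $N=8$ is then a charging argument: each arc of $\Ga$ is charged against a bounded amount of coefficient length along $\bd\Si_{g,m}$, and one must check that nothing is overcharged; this is precisely where the choice of secondary minimization and the exact bookkeeping enter.

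For semi-standard non-orientable $Q$ the same strategy applies, but the clean estimates degrade: one-sided simple closed curves change the topological type of $\Si\setminus\Ga$ non-uniformly, and cutting along the squares $x_i^2$ can force an arc of $\Ga$ to wind past all the cross-caps, so the count no longer decouples from their number. Absorbing this costs a factor equal to the number of quadratic generators, i.e.\ $O(n(Q))$, yielding $|x^\al|\le N\,n(Q)c(Q)$ with $N=36$ after bookkeeping. I expect the main obstacle to be exactly this non-orientable case together with the extraction of the explicit constants $8$ and $36$: the qualitative polynomial bound follows fairly directly from minimality plus $\chi$, but keeping the constants small requires a careful choice of the complexity being minimized and a tight, case-by-case accounting of how the arcs of $\Ga$ distribute among the complementary regions and the boundary segments.
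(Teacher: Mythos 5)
Your first paragraph matches the paper's reduction step: the general bounds are obtained from the standard/semi-standard ones by transporting a solution through the economical automorphisms of Propositions \ref{prop:standard-red-bound-orientable} and \ref{prop:semistandard-red-bound-nonorientable}, at the cost of a factor $4n(Q)$ plus $2c(Q)$. From that point on, however, you take the surface-map route that the paper explicitly sets aside in the introduction in favour of a combinatorial elimination process, and it is exactly at the quantitative heart of that route that your sketch has a genuine gap. You minimize the number of components of $\Ga=f^{-1}(\text{midpoints})$ (plus an unspecified secondary minimization) and then assert that a bound on the number of arcs of $\Ga$ ``hence'' bounds every intersection number $|x_i^\al|=\#(\xi_i\cap\Ga)$. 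It does not: the loops and arcs $\xi_i$ carrying the generators are \emph{fixed} in advance, and a single arc of $\Ga$ can cross a fixed $\xi_i$ arbitrarily many times if the marking is badly adapted to $\Ga$. The solutions of small length live in a well-chosen $\Stab(Q)$-translate, i.e.\ one must \emph{choose the marking} (equivalently, the reducing automorphism) after seeing $\Ga$, and then re-derive the standard form of that marking with controlled word length. That is precisely the content you would have to supply, and it is where essentially all of the paper's work sits: the elimination process producing a short product of conjugates (Proposition \ref{prop:main-reduction}), the economical unfolding of the resulting diagram (Proposition \ref{prop:diagram-unfolding} and Corollary \ref{cor:cell-ordering}), the re-ordering of coefficient factors (Lemma \ref{lm:permut-coefficients}), the bounded rewriting of the coefficient-free part as commutators/squares (Corollary \ref{cor:quadratic-standard-cffree}), and in the non-orientable case the sign-fixing Lemma \ref{lm:semistandard-transition}, which is the actual source of the extra factor $n(Q)$ there. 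Your Euler-characteristic count also does not by itself dispose of essential closed components of $\Ga$ or of nested parallel families of arcs, both of which can inflate intersection numbers without changing $\#(\Ga\cap\bd\Si)=c(Q)$.

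In short: the strategy is a legitimate alternative in principle (it is the Lyndon--van Kampen-on-surfaces picture the authors mention), and your qualitative intuition that everything should be governed by the $c(Q)$ boundary intersections is right, but the passage from ``few arcs in $\Ga$'' to ``short values of the fixed generators'', together with the derivation of the constants $8$ and $36$, is missing and is not a routine charging argument --- it is the theorem. As written, the proposal proves the reduction step and gestures at the rest.
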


In a similar manner, we give a bound on the size of 
parametric
solutions of quadratic equations which participate in 
the description of their solution sets.
To state the result we need to define a concept of a parametric solution. Let $T$ be 
an alphabet of {\em parameters} which is assumed to be disjoint 
from the alphabet of constants ~$A$ and alphabet of variables $X$. 
A {\em parametric solution} of an 
equation $E=1$ in $F_A$ is an $F_A$-homomorphism $\eta: F_{A\cup \Var(E)} \to F_{A \cup T}$ such that $E^\eta=1$
where $\Var(E)$ denotes the set of variables occurring in $E$. If $\eta$ is a parametric solution of $E=1$ then
for any $F_A$-homomorphism $\psi: F_{A\cup T} \to F_A$ we get a solution $\eta\psi$ of $E=1$ in the usual
sense, a {\em specialization} of $\eta$.
Note that we use here a notion of a solution of an equation $E=1$ 
in a free group ~$F_A$ which is slightly different from one introduced above.
Instead of taking $F_A$-homomorphisms of $F_{A \cup X}$ we restrict them
to the free group $F_{A \cup \Var(E)}$ 
involving only 
variables which occur in $E$. 
This provides a more convenient way for describing 
{\em all} possible solutions of a given equation $E=1$.

Let $\Stab(E)$ denote the group of all $F_A$-automorphisms $\phi$ of $F_{A\cup \Var(E)}$ 
such that $E^\phi$ is conjugate to $E$.
This group acts on the solution set of $E=1$ by left multiplications.
Hence any parametric solution $\eta$ produces a whole bunch of solutions 
in the usual sense, 
the union of orbits of specializations of $\eta$:
$$
  \Sol(\eta) = \setof{\phi\eta\om}{\phi \in \Stab(E), \ \om \in \Hom_{F_A}(F_{A\cup T}, F_A)}.
$$

The above mentioned result of Comerford--Edmunds \cite{ComerfordEdmunds-1989} and 
Grigorchuk--Kurchanov \cite{Grigorchuk-Kurchanov-1989} asserts
that for any quadratic equation $E=1$ in $F_A$ there is (and can be effectively produced) a finite set
$\set{\eta_i}$ of {\em basic} parametric solutions such that the set of all solution of $E=1$ is the union
$\cup_i \Sol(\eta_i)$.

Let $\eta$ and $\th$ be two parametric solutions of the same equation $E=1$ in $F_A$. Let us say
that $\eta$ is a {\em generalization} of $\th$ 
if there are an automorphism $\phi\in \Stab (E)$ and an endomorphism 
$\om\in \End_{F_A} (F_{A\cup T})$ such that $\th = \phi\eta\om$. 
Clearly, in this case we have $\Sol(\eta) \supseteq \Sol(\th)$.

\begin{theorem} \label{thm:param-sol-bound}
Let $Q=1$ be a quadratic equation in a free group $F_A$. 
Then any parametric solution of $Q = 1$ has a generalization $\eta$ such that for any variable $x$, 
the length of 
$x^\eta$ is bounded by the same function as in Theorem \ref{thm:sol-bound} with $c(Q)$ replaced by
$c(Q)+ 
2
n(Q)$.

In particular, there is a finite set of basic parametric solutions of $Q=1$ satisfying this bound.
\end{theorem}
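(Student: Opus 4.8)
The plan is to reduce the parametric statement to the non-parametric bound of Theorem \ref{thm:sol-bound} by a substitution trick that turns parameters into coefficients. Given a parametric solution $\eta\colon F_{A\cup\Var(Q)}\to F_{A\cup T}$ of $Q=1$, only finitely many parameters $t_1,\dots,t_k\in T$ actually occur in the words $x^\eta$. I would introduce $2k$ new distinct letters (or, more economically, pick generic short words) and specialize each $t_j$ to a fixed word $w_j$ in the extended free group; but the key point is to choose the $w_j$ so that the resulting ordinary solution of $Q=1$ (now over an enlarged coefficient alphabet) still \emph{records} the parametric structure, so that a short ordinary solution can be pulled back to a short parametric one. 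The natural device is to add $2n(Q)$ fresh constants — this is exactly the source of the ``$+2n(Q)$'' correction to $c(Q)$ — one pair of new constants per variable, used to mark the two occurrences of each variable. Applying Theorem \ref{thm:sol-bound} to this modified equation $\widehat Q=1$ over the alphabet $A\cup\{\text{new constants}\}$ gives an ordinary solution $\widehat\alpha$ with $|x^{\widehat\alpha}|$ bounded by the stated function of $n(\widehat Q)=n(Q)$ and $c(\widehat Q)=c(Q)+2n(Q)$.

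The second, and more delicate, step is to promote $\widehat\alpha$ back to a \emph{parametric} solution $\eta'$ and to check that $\eta'$ is a generalization of the original $\eta$. Here I would replace each occurrence of a new marker constant in $x^{\widehat\alpha}$ by a distinct fresh parameter, obtaining $\eta'\colon F_{A\cup\Var(Q)}\to F_{A\cup T}$; since $\widehat Q^{\widehat\alpha}=1$ and the marker constants were chosen so that no cancellation can occur between the segments they delimit, the relation $Q^{\eta'}=1$ survives the de-specialization. To see that $\eta'$ generalizes $\eta$ in the sense of the definition preceding the theorem, one observes that $\eta$ itself, after the same marker-substitution, is an ordinary solution of $\widehat Q=1$; the proof of Theorem \ref{thm:sol-bound} proceeds by applying a sequence of elements of $\Stab(Q)=\Stab(\widehat Q)$ (Dehn twists and the surface-automorphism reductions underlying the bound) followed by shortening of the coefficient-free part, so the short solution $\widehat\alpha$ is obtained from (a specialization of) $\eta$'s marker-substitution by some $\phi\in\Stab(\widehat Q)$ and some endomorphism $\omega$ of the coefficient/parameter group. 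Reading this chain of maps in the parametric category yields $\eta=\phi\,\eta'\,\omega$ with $\phi\in\Stab(Q)$ and $\omega\in\End_{F_A}(F_{A\cup T})$, which is exactly the required generalization property, and in particular $\Sol(\eta')\supseteq\Sol(\eta)$.

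The main obstacle I expect is the bookkeeping in this last step: Theorem \ref{thm:sol-bound} is stated as an existence result for a short solution, not as a statement that the short solution is obtained from a \emph{prescribed} one by a $\Stab$-automorphism plus a specialization, so I will need to inspect its proof and extract the stronger ``relative'' form — namely that every solution of $Q=1$ lies in the $\Stab(Q)$-orbit of a specialization of a short parametric solution. Granting that (it is the natural content of the surface-theoretic minimization argument, since minimal-complexity representatives are reached by mapping-class-group moves), the parameter-elimination and re-insertion are routine, the length bound transfers verbatim with $c(Q)$ replaced by $c(Q)+2n(Q)$, and finiteness of a generalizing family of basic parametric solutions follows from the Comerford--Edmunds / Grigorchuk--Kurchanov finiteness together with the fact that each basic $\eta_i$ now has a bounded-length generalization, of which there are only finitely many up to the equivalence in play.
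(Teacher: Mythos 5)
Your plan hinges on a step that you yourself flag and then grant: the ``stronger relative form'' of Theorem \ref{thm:sol-bound}, namely that every solution lies in the $\Stab(Q)$-orbit of a specialization of the short (parametric) solution you construct. That relative form is essentially the theorem to be proved, and it does \emph{not} follow from the proof of Theorem \ref{thm:sol-bound}. That proof rests on Proposition \ref{prop:main-reduction}, whose elimination process explicitly does not track transformation homomorphisms: it uses degenerate substitutions $x\mapsto 1$ (which are not injective and can even change the topological type of the underlying quadratic word) and, in its Case 2, simply redefines the value of a variable to a new word built from the other values. Neither move is of the form ``automorphism in $\Stab(Q)$ followed by an endomorphism of the parameter group,'' so the short solution it outputs bears no controlled relation to the solution you started from. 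Closing this gap is the actual content of the paper's Section \ref{sec:parametric}: one must rebuild the elimination process so that every step is either a \emph{substitution} through a primitive $F_A$-monomorphism or a \emph{generalization} (replacing designated subwords of the values by fresh parameter letters), with degenerate transformations forbidden and degenerate/cancelled variables carried along rather than deleted. This is Proposition \ref{prop:main-reduction-strong}, and it is where almost all the work lies; it cannot be extracted from the non-parametric argument by inspection.

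The marker-constant device also does not do what you want. If you specialize the parameters of $\eta$ to new letters and apply Theorem \ref{thm:sol-bound} over the enlarged alphabet, the short solution $\widehat\alpha$ you obtain is just \emph{some} solution of the (marker-decorated) equation; nothing forces it to contain the markers in the delimiting pattern you need, or to be reachable from the marker-substitution of $\eta$ by a $\Stab$-automorphism plus a specialization, so de-specializing it yields a short parametric solution but not a \emph{generalization} of $\eta$. Relatedly, your accounting ``one pair of new constants per variable $=2n(Q)$'' is a guess that happens to match the constant but not its origin: in the paper the $+2n(Q)$ comes from bounding the number of fresh parameter letters introduced during generalization steps, via the count of tree and tubular edges of a tight van Kampen diagram (Proposition \ref{prop:diagram-unfolding} and part (iii) of Proposition \ref{prop:main-reduction-strong}, which give fewer than $m\le n(Q)$ such edges, each labelled by a single parameter), plus one parameter per cancelled variable. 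Your concluding remarks on the ``in particular'' clause are fine, but the core of the proof is missing.
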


Note that a 
simple
description of solution sets (in terms of basic 
parametric
solutions) is known for coefficient-free
quadratic equations. In this case, only one basic solution is enough.
Moreover, if an equation is in the standard form 
then the value of each variable in the basic parametric solution
is either a parameter letter or trivial, see
\cite[Section 5]{Grigorchuk-Kurchanov-1993}. This basic parametric solution is a generalization of every
other
parametric solution, so the theorem does not give much in this case.

The proof of Theorems \ref{thm:sol-bound} and \ref{thm:param-sol-bound}
has three main constituents. In Section \ref{sec:quadratic-transformations} we
produce 
various 
automorphisms 
that allow
to transform quadratic words to a desired form.
In particular, we re-prove a well known fact (%
closely related to
the classification theorem for
compact simplicial surfaces, see for example \cite[Chapter VI]{Seifert-Threllfal}) 
that a quadratic word can be equivalently transformed 
to a word belonging to one of the four standard series,
see
Proposition \ref{prop:quadratic-standard}.
However, in our proof, 
we provide an ``economical'' transformation
(Propositions \ref{prop:standard-red-bound-orientable}, \ref{prop:semistandard-red-bound-nonorientable},
\ref{prop:standard-red-bound-nonorientable}, \ref{prop:semistandard-red-bound-inverse} and
\ref{prop:standard-red-bound-nonorientable-inverse}.)
 As a corollary we formulate a general 
result on the 
bound on the complexity of automorphisms which reduce a given quadratic word to the standard form 
over {\em any\/} group ~$G$.

\begin{maincorollary} 
Let $Q \in G * F_X$ be quadratic word over 
an arbitrary 
group $G$. Then
there are automorphisms $\phi,\psi \in \Aut_G(G * F_X)$
such that $Q^\phi$ and $Q^\psi$ are conjugate to the standard quadratic words equivalent to $Q$ and for any variable $x$,
$$
  |x^\phi| , |x^{\psi^{-1}}| \le
    \begin{cases}
    4 n(Q) + 2 c(Q) & \quad \text{if $Q$ is orientable}, \\
    8 n^2 (Q) + 4 n(Q) c(Q)  & \quad \text{if $Q$ is non-orientable},
    \end{cases}
$$
where $c(Q)$ is the total length of coefficients of $Q$ expressed in any left-invariant (e.g.\ word) 
metric on $G$.
\end{maincorollary}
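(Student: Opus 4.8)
The plan is to derive the corollary from the ``economical'' reductions established in Section~\ref{sec:quadratic-transformations}; the only genuinely new ingredient is the passage from a free group of coefficients to an arbitrary group~$G$, which I would handle by replacing the coefficients of $Q$ by generic letters.

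First I would fix a left-invariant metric on $G$ and write $Q$ as an alternating product of its coefficient blocks $g_0, g_1, \dots, g_m \in G$ (the maximal subwords of the reduced word $Q$ that lie in $G$, some possibly trivial) and the $2n(Q)$ variable occurrences. For each nontrivial $g_i$ I pick a reduced word $a_{i,1} \cdots a_{i,\ell_i}$ of length $\ell_i = |g_i|$ over a fresh alphabet $A$, all the letters $a_{i,j}$ chosen distinct, and let $\hat Q \in F_{A \cup X}$ be obtained from $Q$ by substituting these words for the $g_i$. Then $n(\hat Q) = n(Q)$, $c(\hat Q) = \sum_i \ell_i = c(Q)$, and $\hat Q$ is orientable exactly when $Q$ is. Next I would apply Proposition~\ref{prop:quadratic-standard} together with the quantitative Propositions~\ref{prop:standard-red-bound-orientable}, \ref{prop:semistandard-red-bound-nonorientable} and~\ref{prop:standard-red-bound-nonorientable} to obtain $\hat\phi \in \Aut_{F_A}(F_{A \cup X})$ with $\hat Q^{\hat\phi}$ conjugate to the standard word equivalent to $\hat Q$, and similarly Propositions~\ref{prop:semistandard-red-bound-inverse} and~\ref{prop:standard-red-bound-nonorientable-inverse} to obtain $\hat\psi$ with $\hat\psi^{-1}$ controlled. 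Taking the worst case over the finitely many standard series --- and over whether one first routes through a semi-standard form in the non-orientable case --- the bounds on $|x^{\hat\phi}|$ and $|x^{\hat\psi^{-1}}|$ should come out to be exactly $4n(Q) + 2c(Q)$ in the orientable case and $8n(Q)^2 + 4n(Q)c(Q)$ in the non-orientable case, the constants being read off from the cited statements.

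The last step is to transport $\hat\phi$ and $\hat\psi$ back to $G$. The point --- which I would verify from the explicit form of the reductions in Section~\ref{sec:quadratic-transformations} --- is that these automorphisms never split a coefficient: they are composed of elementary quadratic automorphisms that conjugate variables and multiply them by other variables and by whole coefficient blocks, so each block $a_{i,1}\cdots a_{i,\ell_i}$ occurs in every word $x^{\hat\phi}$, $x^{\hat\psi^{\pm1}}$ only as a whole subword or the inverse of one. Hence the $F_A$-homomorphism sending $a_{i,j}$ to the $j$-th letter of $g_i$, applied to the words $x^{\hat\phi}$, $x^{\hat\psi}$ and $x^{\hat\psi^{-1}}$, yields well-defined automorphisms $\phi, \psi \in \Aut_G(G * F_X)$; substituting it into $\hat Q^{\hat\phi}$ and $\hat Q^{\hat\psi}$ turns the standard word equivalent to $\hat Q$ into the standard word equivalent to $Q$, so $Q^\phi$ and $Q^\psi$ are conjugate to the latter. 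Since each block $a_{i,1}\cdots a_{i,\ell_i}$ carries weight $\ell_i = |g_i|$ and is replaced by $g_i$, which has the same weight in the metric on $G * F_X$, while reduction in $G * F_X$ only shortens, I get $|x^\phi| \le |x^{\hat\phi}|$ and $|x^{\psi^{-1}}| \le |x^{\hat\psi^{-1}}|$, which are the asserted bounds.

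I expect the main obstacle to be the bookkeeping in these last two paragraphs rather than any single estimate: checking that the concrete reductions of Section~\ref{sec:quadratic-transformations} really do treat coefficients atomically, so that the passage back from generic letters to $G$ is legitimate, and reconciling the several case-by-case estimates of the cited propositions with the single clean pair of functions stated in the corollary.
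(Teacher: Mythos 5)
Your proposal is correct and follows essentially the paper's route: the corollary is obtained by combining Propositions \ref{prop:standard-red-bound-orientable} and \ref{prop:semistandard-red-bound-inverse} in the orientable case, and Propositions \ref{prop:standard-red-bound-nonorientable} and \ref{prop:standard-red-bound-nonorientable-inverse} together with the inequality $\mathrm{genus}(Q)\le n(Q)$ in the non-orientable case. The only place where you do extra work is the transport to an arbitrary coefficient group $G$: you re-encode each coefficient as a block of fresh letters in a free group and then verify that the reduction automorphisms treat these blocks atomically. The paper has already built exactly this into Section \ref{sec:quadratic-transformations}: the propositions there are stated for quadratic words with \emph{formal} coefficients, i.e.\ for $Q\in F_{C\cup X}$ where each coefficient is a single letter $c\in C$ occurring at most once, together with a coefficient map $F_C\to G$; the bounds are per-occurrence counts $|x^\phi|_y\le 4$ and $|x^\phi|_c\le 2$ (resp.\ the genus-weighted versions), so substituting $c\mapsto c^\gamma\in G$ immediately yields $|x^\phi|\le 4n(Q)+2c(Q)$ for any left-invariant metric. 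Your letter-by-letter expansion is equivalent in substance, but as literally written it presumes that $|g_i|$ is an integer and that $g_i$ factors into pieces of weight one --- true for word metrics but not for a general left-invariant metric; using one formal letter of weight $|g_i|$ per coefficient, as the paper does, removes both the integrality issue and the need for the atomicity check you flag as the main obstacle.
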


Note that we provide here bounds for two different automorphisms: 
for the direct transformation $Q \xrightarrow\phi R$ and for the inverse transformation 
$R \xrightarrow{\psi^{-1}} Q$ where $R$ denotes the standard form of $Q$. The reason is that 
a bound on a automorphism $\phi$ of a free group $F$ does not 
imply a reasonable bound on its inverse $\phi^{-1}$;
in particular, see Example \ref{exmpl:inverse-streching} below.

As the next step of our argument we  develop a version of the elimination process 
for quadratic equations in a free group. To do this, we define a certain set
of transformations on pairs of the form $(Q,\al)$ where $Q$ is a coefficient-free quadratic word
and $\al$ is an evaluation of variables in $Q$, i.e.\ an $F_A$-homomorphism $F_{A \cup \Var(Q)} \to F_A$. 
Then we define a certain transformation 
sequence starting from a given pair $(Q,\al)$
whose primary goal is to eliminate cancellations in the formal word $Q[\al]$
obtained by substituting in $Q$ of the value $x^\al$ of each variable ~$x$.
Our approach is essentially an improved version of similar approaches
used in \cite{ComerfordEdmunds-1981,ComerfordEdmunds-1989,Grigorchuk-Kurchanov-1989} for
description of solution sets of quadratic equations in a free group. 
We start in Section \ref{sec:simple-elimination} with a lighter version of it,
with no care about transformation homomorphisms, and prove Proposition \ref{prop:main-reduction} 
sufficient
for the proof of Theorem \ref{thm:sol-bound}. Our full version of the eliminated process
is elaborated in Section \ref{sec:parametric} for the proof of Theorem \ref{thm:param-sol-bound}.

Note that there is an alternating (and essentially equivalent) approach to solutions of quadratic
equations in free groups using Lyndon--van Kampen diagrams on surfaces, see for example 
\cite{Olshanskii-1989}. 
Though this approach provides a clear geometric vision,
an advantage of using elimination process is an easier control of transformation homomorphisms.

The final ingredient to the proof is a statement about Lyndon--van Kampen diagrams
(Proposition \ref{prop:diagram-unfolding}) which says that a diagram can be unfolded in an economical way.

\section{Quadratic words and surfaces} \label{sec:surfaces}

In this section, we recall some elementary concepts and formulate some known facts about quadratic words.
The  free group $F_A$ and a countable set $X$ of variables will be fixed throughout the
whole paper. 
The following fact is well known (see for example \cite[Section 4]{Grigorchuk-Kurchanov-1993}).

\begin{proposition} \label{prop:quadratic-standard}
By an $F_A$-automorphism of $F_{A\cup X}$, every quadratic word can be reduced to one of the following
forms: 
\begin{gather*}
  [x_1,y_1] [x_2,y_2] \dots [x_g,y_g] \quad (g \ge0), \\
  x_1^2 x_2^2 \dots x_g^2 \quad (g >0), \\
  [x_1,y_1] [x_2,y_2] \dots [x_g,y_g] \, c_1 z_2^{-1} c_2 z_2 \dots z_{m}^{-1} c_{m} z_{m} 
    \quad (g \ge0, \ m\ge 1), \\
  x_1^2 x_2^2 \dots x_g^2 \, c_1 z_2^{-1} c_2 z_2 \dots z_{m}^{-1} c_{m} z_{m} \quad (g >0, \ m\ge 1).
\end{gather*}
Here $x_i$, $y_i$ and $z_i$ are variables and $c_i\in F_A$, $c_i\ne 1$, are coefficients.
\end{proposition}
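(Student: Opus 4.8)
The plan is to give the classical algorithmic argument (it is essentially the proof of the classification of compact surfaces, cf.\ \cite[Chapter VI]{Seifert-Threllfal}), working with $Q$ as a \emph{cyclic} word and using only elementary $F_A$-automorphisms: the Nielsen moves $x\mapsto x^{-1}$, $x\mapsto xy^{\pm1}$, $x\mapsto y^{\pm1}x$ on variables, and conjugations $v\mapsto w^{-1}vw$ of a single variable $v$ by a word $w$ not involving $v$. Each such conjugation is a composition of two Nielsen moves, hence an $F_A$-automorphism, and crucially it fixes every coefficient. Cyclic permutation of the cyclic word is used freely (a final genuine automorphism converts the resulting cyclic normal form into the displayed linear word). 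The proof is an induction on $n(Q)$ carried out in two phases: first collect the coefficients into a ``tail'', then bring the coefficient-free remainder to one of the two coefficient-free normal forms.

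\emph{Phase 1 (coefficients).} After merging maximal runs of consecutive coefficients, $Q$ is a cyclic alternating product of coefficient blocks and variable occurrences. Cyclically rotate so that $Q$ begins with a coefficient block, which will be $c_1$. For each of the remaining coefficient blocks I would use a conjugation automorphism to drag it leftwards to the slot immediately after the already-processed blocks, the variable occurrences it passes over supplying (after a Nielsen move when necessary) a single fresh ``conjugator'' variable $z_i$ under which the block becomes $z_i^{-1}c_iz_i$; the block $c_1$ is left bare precisely because the cyclic basepoint was chosen at $c_1$. This yields $Q=Q_0\cdot c_1 z_2^{-1}c_2z_2\cdots z_m^{-1}c_mz_m$ with $Q_0$ a coefficient-free quadratic word in the remaining variables (and we skip to Phase 2 with $m=0$ if $Q$ has no coefficients).

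\emph{Phase 2 (coefficient-free part).} Now I would run the surface normalization on $Q_0$, carrying the tail $T=c_1z_2^{-1}c_2z_2\cdots z_m^{-1}c_mz_m$ along untouched, noting that every move below substitutes only variables occurring in $Q_0$, hence fixes $T$. (a) If some variable $x$ occurs in $Q_0$ with two \emph{equal} exponents, write $Q_0=xAxB$ (both occurrences of $x$ displayed, so $A,B$ avoid $x$) and apply $x\mapsto xA^{-1}$ to obtain $x^2\cdot(A^{-1}B)$, where $A^{-1}B$ is a shorter coefficient-free quadratic word; recursing drives $Q_0$ toward $x_1^2\cdots x_g^2$. (b) If every variable of $Q_0$ occurs with \emph{opposite} exponents, first remove \emph{inessential handles}: if $Q_0=xAx^{-1}B$ with $A$ itself quadratic (all its variables paired inside $A$), conjugating each variable of $A$ by $x$ turns $Q_0$ into $AB$, which is strictly shorter; iterate. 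For a remaining (essential) handle there is a variable $y$ with exactly one occurrence of $y,y^{\pm1}$ between the two occurrences of $x$, and the standard finite sequence of Nielsen moves on $x$ and $y$ (absorbing the intervening blocks) replaces that portion by a commutator $[x,y]$ at the front, leaving a shorter coefficient-free quadratic word; recursing gives $[x_1,y_1]\cdots[x_g,y_g]$. (c) If the recursion ever produces both a commutator and a square, invoke Dyck's identity $[x,y]\,z^2 = z_1^2z_2^2z_3^2$, realized by an explicit $F_A$-automorphism, to push the whole word into the all-squares form. Reassembling $Q_0$ with $T$ gives exactly the four listed forms ($T=1$ yielding the first two, $T\ne1$ the last two).

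The hard part will be the interface between the two phases, that is, certifying that the coefficient tail never blocks a move and is never disturbed: one must check that the handle/square normalization of $Q_0$ can always be performed using Nielsen and conjugation moves supported on the variables of $Q_0$ \emph{alone}, so that the $z_i$ and $c_i$ in $T$ stay put, and that in Phase 1 each coefficient really can be equipped with its own conjugator variable while $c_1$ is left bare. This is where the bookkeeping of which variable plays which role, and the choice of cyclic basepoint, must be done with care. The remaining ingredients — the explicit Nielsen sequence collecting an essential handle into a commutator, and the automorphism realizing Dyck's relation — are routine but tedious; they are exactly what is reproved with quantitative bounds in Section \ref{sec:quadratic-transformations}.
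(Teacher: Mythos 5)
Your overall strategy is the classical surface-classification argument, which is exactly what the paper carries out (in an economical, quantitative form) in Section~\ref{sec:quadratic-transformations}: your Phase~1 corresponds to Steps 1--3 in the proof of Proposition~\ref{prop:standard-red-bound-orientable}, and your Phase~2 to Step~4 there together with Step~$4^{\rm n}$ and Lemma~\ref{lm:extra-autos}. Phase~2 is essentially right, up to one sign slip: to delete an inessential handle from $xAx^{-1}B$ you must conjugate the variables of $A$ by $x^{-1}$, i.e.\ $v\mapsto x^{-1}vx$, so that $xAx^{-1}B\mapsto x(x^{-1}Ax)x^{-1}B=AB$; conjugating by $x$ as written gives $x^2Ax^{-2}B$.

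The genuine gap is in Phase~1. A coefficient block cannot be ``dragged leftwards past a variable occurrence'': the only available move is the coefficient shift $(x^\ep\mapsto x^\ep d)$, which deletes $d^{-1}$ after one occurrence of $x^\ep$ and makes it reappear in front of the \emph{other} occurrence $x^{-\ep}$ --- wherever in the cyclic word that happens to be. A block does not slide; it jumps along an edge of $\Ga_Q$, and controlling where it lands, and arranging that some variable $z_i$ ends up with \emph{both} of its occurrences flanking $c_i$ and occurring nowhere else, is precisely the nontrivial bookkeeping. Nor can this be sidestepped by a ``fresh conjugator variable'': an automorphism $x^\ep\mapsto Wx^\ep$ involving a new letter $z$ plants $z$ next to \emph{both} occurrences of $x$, not as a clean isolated $z_i^{-1}c_iz_i$. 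The mechanism that actually works is the one in Steps 1--3 of the paper: first contract each boundary component of $S_Q$ to a single essential vertex and contract the interior of $\Ga_Q$ to one base vertex; then use connectivity of $\Ga_Q$ to find, for each remaining boundary component, an \emph{existing} variable $z_{i+1}$ joining it to the base vertex, and shift all other edges incident to that component along $z_{i+1}^{\pm1}$ (related Nielsen moves and coefficient shifts) so that $z_{i+1}^{-1}c_{i+1}z_{i+1}$ appears with $z_{i+1}$ isolated. Without this (or an equivalent connectivity argument) your Phase~1 restates what is to be proved rather than proving it.
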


\begin{definition} \label{def:standard}
A quadratic word $Q$ and an equation $Q=1$ are called {\em standard} if $Q$ belongs to one of the
four series given in Proposition \ref{prop:quadratic-standard}.
\end{definition}

The proof of the proposition essentially repeats the proof of the classification theorem for compact 
surfaces, see for example a classical topology textbook \cite[Sections 38--40]{Seifert-Threllfal}. 
In the next section we refine the proof 
and formulate a series of propositions
that the reduction automorphism can be chosen economically in terms of its size.

The series of quadratic words in Proposition \ref{prop:quadratic-standard} represent the four series of compact
surfaces: closed orientable surfaces of genus ~$g$, closed non-orientable surfaces of genus ~$g$, 
orientable surfaces of genus ~$g$ with ~$m$ boundary components and non-orientable surfaces of genus ~$g$
with ~$m$ boundary components, respectively. 

The relation between quadratic words and surfaces relies upon the following simple observation.
Let ~$Q$ be a quadratic word.
Take a 2-disk $D$ with boundary $\bd D$ divided into ~$|Q|$ arcs. 
Choose an orientation of the boundary $\bd D$ and label the arcs with letters 
of $Q$ in the order they appear in $Q$ 
(instead of $x^{-1}$ we put $x$ and direct the arc in the opposite to the orientation of $\bd D$). 
Then for each variable $x$ occurring in $Q$, glue together the two arcs labelled by the two 
occurrences of $x$ in $Q$, according to their orientation. We get a 2-complex ~$S_Q$ representing 
a compact surface. The surface is closed if $Q$ is coefficient-free and has a boundary otherwise. 
(See Fig. ~\ref{fig:surface} where $Q=x^{-1} a y^{-1} b x c y d$ and $S_Q$ is 
homeomorphic to a torus with a disk removed.)
%one boundary component.)
\begin{figure}[h]% 
\begin{picture}(0,0)%
\includegraphics{fig1.pstex}%
\end{picture}%
\setlength{\unitlength}{0.00087489in}%
\begingroup\makeatletter\ifx\SetFigFont\undefined%
\gdef\SetFigFont#1#2#3#4#5{%
  \reset@font\fontsize{#1}{#2pt}%
  \fontfamily{#3}\fontseries{#4}\fontshape{#5}%
  \selectfont}%
\fi\endgroup%
{\renewcommand{\dashlinestretch}{30}%
\begin{picture}(5368,1867)(0,-10)
\put(2270,931){\makebox(0,0)[lb]{\smash{{\SetFigFont{12}{14.4}{\rmdefault}{\mddefault}{\updefault}$\longrightarrow$}}}}
\put(867,859){\makebox(0,0)[lb]{\smash{{\SetFigFont{12}{14.4}{\rmdefault}{\mddefault}{\updefault}$D$}}}}
\put(3132,77){\makebox(0,0)[lb]{\smash{{\SetFigFont{12}{14.4}{\rmdefault}{\mddefault}{\updefault}$S_Q$}}}}
\put(3855,607){\makebox(0,0)[lb]{\smash{{\SetFigFont{12}{14.4}{\rmdefault}{\mddefault}{\updefault}$b$}}}}
\put(4160,612){\makebox(0,0)[lb]{\smash{{\SetFigFont{12}{14.4}{\rmdefault}{\mddefault}{\updefault}$c$}}}}
\put(3842,247){\makebox(0,0)[lb]{\smash{{\SetFigFont{12}{14.4}{\rmdefault}{\mddefault}{\updefault}$a$}}}}
\put(4160,250){\makebox(0,0)[lb]{\smash{{\SetFigFont{12}{14.4}{\rmdefault}{\mddefault}{\updefault}$d$}}}}
\put(4372,225){\makebox(0,0)[lb]{\smash{{\SetFigFont{12}{14.4}{\rmdefault}{\mddefault}{\updefault}$x$}}}}
\put(5075,890){\makebox(0,0)[lb]{\smash{{\SetFigFont{12}{14.4}{\rmdefault}{\mddefault}{\updefault}$y$}}}}
\put(1468,255){\makebox(0,0)[lb]{\smash{{\SetFigFont{12}{14.4}{\rmdefault}{\mddefault}{\updefault}$a$}}}}
\put(1730,933){\makebox(0,0)[lb]{\smash{{\SetFigFont{12}{14.4}{\rmdefault}{\mddefault}{\updefault}$y$}}}}
\put(1488,1504){\makebox(0,0)[lb]{\smash{{\SetFigFont{12}{14.4}{\rmdefault}{\mddefault}{\updefault}$b$}}}}
\put(862,1747){\makebox(0,0)[lb]{\smash{{\SetFigFont{12}{14.4}{\rmdefault}{\mddefault}{\updefault}$x$}}}}
\put(287,1503){\makebox(0,0)[lb]{\smash{{\SetFigFont{12}{14.4}{\rmdefault}{\mddefault}{\updefault}$c$}}}}
\put(15,925){\makebox(0,0)[lb]{\smash{{\SetFigFont{12}{14.4}{\rmdefault}{\mddefault}{\updefault}$y$}}}}
\put(880,15){\makebox(0,0)[lb]{\smash{{\SetFigFont{12}{14.4}{\rmdefault}{\mddefault}{\updefault}$x$}}}}
\put(207,280){\makebox(0,0)[lb]{\smash{{\SetFigFont{12}{14.4}{\rmdefault}{\mddefault}{\updefault}$d$}}}}
\end{picture}
}
\caption{}\label{fig:surface}
\end{figure}

Directed edges of 2-complex $S_Q$ carry in a natural way labels in $A^{\pm1} \cup X^{\pm1}$. The edges in the interior
of the surface are labelled by variables in $\Var(Q)$ and 
edges in the surface boundary labelled by letters in $A^{\pm1}$ which come from coefficients of $Q$. 
In case of orientable ~$S_Q$ we fix a positive direction of 
passing along each boundary component
which agrees with the chosen orientation of
the boundary of disk $D$. 
In this case, we view the label of the boundary component as an element of 
$F_A$ defined up to conjugacy.
If $S_Q$ is non-orientable then labels of boundary 
components are viewed as elements of $F_A$ 
defined up to conjugacy and taking the inverse.

The topological type of $S_Q$ may be different from the topological type of $S_R$ where $R$ is a standard
form of $Q$ from Proposition \ref{prop:quadratic-standard}. 
It is not hard to see that if 
the label $c$ of a boundary component of ~$S_Q$ represents the
trivial element of $F_A$ 
then the corresponding boundary component disappears in $S_R$. 
We introduce an extra reduction step for $S_Q$:
if $c$ is trivial, attach a 2-disk along the corresponding boundary 
component of $S_Q$. We denote $\bar{S}_Q$ the resulting surface.

\begin{definition}
Let $Q$ be a quadratic word. We call the unordered tuple of all nontrivial labels of boundary components 
of $S_Q$, viewed as elements of $F_A$,
 the {\em standard coefficients} of $Q$. As remarked above, the standard coefficients are defined
up to conjugacy if $S_Q$ is orientable and up to conjugacy and taking inverses if $S_Q$ is non-orientable.
\end{definition}

\begin{definition} \label{def:quadratic-equivalent}
Two quadratic words $Q$ and $R$ are {\em equivalent} if $\bar{S}_{Q}$ and $\bar{S}_{R}$ have the same 
topological type and their tuples 
$(c_{11},\dots,c_{m1})$ and $(c_{12},\dots,c_{m2})$ of standard coefficients 
are the same up the the natural equivalence:
if $S_{Q}$ is orientable then up to enumeration, each ~$c_{i1}$ is
conjugate to the corresponding $c_{i2}$; if $S_{Q}$ is non-orientable then up to enumeration, each
~$c_{i1}$ is conjugate to either $c_{i2}$ or $c_{i2}^{-1}$.
\end{definition}

With a slight abuse of the language, we call a quadratic word $Q$ itself {\em orientable} if $S_Q$ is an orientable
surface and {\em non-orientable} otherwise. It is immediate from the construction of ~$S_Q$ that $Q$
is orientable if and only if each variable in $Q$ has two occurrences with different 
exponents
$-1$ and $+1$.
By the {\em genus} of $Q$ we mean the genus of $S_Q$. 

Thus, any quadratic word $Q$ is defined up to equivalence by the following parameters: orientability (true/false),
the genus $g$ and the tuple of standard coefficients $(c_1,\dots,c_{m})$.
Proposition \ref{prop:quadratic-standard} asserts precisely that any quadratic word can be reduced
by an $F_A$-automorphism of $F_{A\cup X}$ to an equivalent standard quadratic word.

The following fact may be attributed to folklore. (We don't make use of it in the paper and provide only
a sketch of the proof.)

\begin{proposition}
Let $Q,R \in F_{A\cup X}$ be two quadratic words.
Then the following statements are equivalent:
\begin{enumerate}
\item 
$R$ is equivalent to $Q$;
\item
$R$ is the image of $Q$ under an $F_A$-automorphism of $F_{A\cup X}$;
\item
$R$ is conjugate to the image of $Q$ under an $F_A$-automorphism of $F_{A\cup X}$.
\end{enumerate}
\end{proposition}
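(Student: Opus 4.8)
The plan is to prove the chain of implications $(2)\Rightarrow(3)\Rightarrow(1)\Rightarrow(2)$, where the last implication is precisely (the content of) Proposition \ref{prop:quadratic-standard} together with the fact that ``equivalent'' is an equivalence relation. The implication $(2)\Rightarrow(3)$ is trivial since conjugation by the identity does nothing. The implication $(3)\Rightarrow(1)$ is the geometric core: I would show that applying an $F_A$-automorphism of $F_{A\cup X}$ to a quadratic word, or conjugating the result, does not change the topological type of $\bar S_Q$ nor its tuple of standard coefficients up to the natural equivalence of Definition \ref{def:quadratic-equivalent}. Finally, for $(1)\Rightarrow(2)$: if $R$ is equivalent to $Q$, then by Proposition \ref{prop:quadratic-standard} both $Q$ and $R$ are carried by $F_A$-automorphisms to standard words $Q_0$ and $R_0$; since standard words are determined up to a trivial relabelling of variables by (orientability, genus, tuple of standard coefficients up to conjugacy/inversion), and $Q\sim R$ means these data agree, $Q_0$ and $R_0$ differ only by a permutation/renaming of the variables $x_i,y_i,z_i$ and a conjugation-type adjustment of the coefficients $c_i$, which is itself realised by an $F_A$-automorphism; composing gives the desired automorphism taking $Q$ to $R$.

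For $(3)\Rightarrow(1)$ I would argue as follows. It suffices to treat $(2)\Rightarrow(1)$ and then observe separately that conjugation preserves equivalence. Conjugation is the easy half: replacing $Q$ by $wQw^{-1}$ does not change the cyclic word, hence builds a homeomorphic surface $S_Q$ (one is just choosing a different starting arc on $\bd D$), so $\bar S_Q$ and the standard coefficients are literally unchanged. For $(2)\Rightarrow(1)$, since $\Aut_{F_A}(F_{A\cup X})$ is generated by elementary Nielsen-type automorphisms on the variables $X$ (fixing $A$ pointwise), it is enough to check invariance of the equivalence class under a single such generator: $x\mapsto x^{-1}$, $x\mapsto xy$ (or $x\mapsto yx$, $x\mapsto xy^{-1}$, etc.), and permutations of $X$. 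Each of these induces an explicit modification of the glued-up surface $S_Q$ — the classical ``elementary transformations of surface polygons'' (cutting along a diagonal and regluing, flipping an edge) — and one checks directly that each such modification yields a surface of the same topological type with the same boundary labels up to conjugacy, and up to inversion in the non-orientable case. This is exactly the bookkeeping underlying Proposition \ref{prop:quadratic-standard} and the textbook classification of surfaces, so the cleanest write-up cites \cite[Sections 38--40]{Seifert-Threllfal} or the refined transformations of Section \ref{sec:quadratic-transformations} for the per-generator verification.

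The main obstacle is the non-orientable bookkeeping in $(1)\Rightarrow(2)$: one must be careful that the ``natural equivalence'' allows each standard coefficient $c_i$ to be replaced by a conjugate \emph{or its inverse} in the non-orientable case, and check that the relevant replacement is genuinely induced by an $F_A$-automorphism of $F_{A\cup X}$ rather than merely an abstract identification. Concretely, in the standard non-orientable form with a boundary piece $z_i^{-1}c_i z_i$, replacing $c_i$ by $c_i^{-1}$ is achieved by the automorphism fixing all other variables and sending $z_i\mapsto z_i'$ for a suitable word $z_i'$ (using that $z^{-1}c z$ and its ``reverse reading'' are interchangeable after an automorphic change of the variable $z$, which is available precisely because $S_Q$ is non-orientable — there is an orientation-reversing loop to conjugate by). One also needs that relabelling variables and conjugating a coefficient $c_i$ to $u c_i u^{-1}$ inside the $i$-th boundary block is an $F_A$-automorphism, which is clear: send $z_i\mapsto u^{-1}z_i$. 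Assembling these elementary moves shows $Q_0$ and $R_0$ are related by an $F_A$-automorphism, completing the cycle. Since the excerpt explicitly says only a sketch is provided and the result is not used elsewhere, I would keep the $(2)\Rightarrow(1)$ surface verification at the level of ``this is the standard elementary-transformation argument'' and spend what detail there is on the non-orientable inversion point above.
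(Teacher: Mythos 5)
Your implication $(1)\Rightarrow(2)$ follows the paper's route exactly: reduce both words to standard form by Proposition \ref{prop:quadratic-standard} and then match the standard forms by explicit automorphisms (the paper points to Lemmas \ref{lm:permut-coefficients} and \ref{lm:semistandard-transition} for this); your attention to the coefficient-inversion move in the non-orientable case is the right thing to worry about there.

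The gap is in $(3)\Rightarrow(1)$ (equivalently, in your $(2)\Rightarrow(1)$ step). You propose to write $\phi$ as a product of elementary Nielsen-type generators and check invariance of the equivalence class one generator at a time. But the equivalence class of a word $W$ is only defined when $W$ is quadratic --- the surface $S_W$ does not exist otherwise --- and an elementary Nielsen automorphism applied to a quadratic word generally destroys quadraticity: if $y\in\Var(Q)$ and you apply $x\mapsto xy$, then $y$ occurs four times in the image. So for an arbitrary factorization $\phi=\tau_1\cdots\tau_k$ the intermediate words $Q^{\tau_1\cdots\tau_i}$ need not be quadratic, the invariants are undefined for them, and the generator-by-generator induction does not get off the ground. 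This is precisely the difficulty the paper's sketch is built to overcome: it invokes the Higgins--Lyndon peak-reduction theorem (Proposition I.4.23 of Lyndon--Schupp, applied to the cyclic word $Q$ together with the letters of $A$) to obtain a factorization of $\phi$ into Whitehead automorphisms along which $|Q_i|$ first strictly decreases, then is constant, then strictly increases, and then argues that in the two monotone chains (the increasing one read backwards from $R$) each Whitehead automorphism can be further factored into Nielsen moves whose intermediate cyclically reduced images remain quadratic. Only after that reduction is your per-generator surface check (cut-and-paste on $S_Q$) applicable. Without some substitute for peak reduction, your argument for $(3)\Rightarrow(1)$ is incomplete.
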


\begin{proof}[Sketch of the proof:]
To prove implication (i)$\Rightarrow$(ii), 
we reduce two given equivalent quadratic words to equivalent
standard forms by Proposition \ref{prop:quadratic-standard}. It is then an easy exercise to show
that two equivalent standard quadratic words are images of each other under an $F_A$-automorphisms of
$F_{A\cup X}$ (one can use also automorphisms from Lemmas \ref{lm:permut-coefficients} and
\ref{lm:semistandard-transition} below).

Implication (iii)$\Rightarrow$(i)
follows easily from the following statement: If $Q, R \in F_{A \cup X}$
are quadratic words and 
$R$ is conjugate to $Q^\phi$ 
for some $\phi\in \Aut_{F_A} (F_{A\cup X})$ then $\phi$ may be represented as a product
$\tau_1 \tau_2 \dots \tau_k$ of elementary Nielsen $F_A$-automorphisms ~$\tau_i$ such that 
the cyclically reduced form of 
$Q^{\tau_1\dots \tau_{i}}$ is a quadratic word for each $i$.
To prove the statement, 
we use the Higgins--Lyndon approach to stabilizers in $\Aut(F)$ as exposed in Lyndon and Schupp's
book \cite[Section I.4]{Lyndon-Schupp}. 
We apply a modified version of Proposition I.4.23 of \cite{Lyndon-Schupp} to 
one cyclic word $u_1=Q$ and the tuple of non-cyclic words $u_i$, $i\ge 2$, consisting of 
all letters $a\in A$.
By this proposition, $\phi$ can be represented as a product 
$$
  \phi = \rho_1 \rho_2 \dots \rho_r
$$ 
of Whitehead automorphisms ~$\rho_i$ such that for some $p$ and $q$, $0 \le p \le q \le r-1$, 
$$
  |Q_{i+1}| < |Q_i|  \text{ for } i=0,\dots,p, \quad
  |Q_{i+1}| = |Q_i|  \text{ for } i=p+1,\dots,q
$$
and
$$ 
  |Q_{i+1}| > |Q_i|  \text{ for } i=q+1,\dots,r-1
$$ 
where, by definition, $Q_0 = Q$ and 
$Q_i$ is the cyclically reduced form of $Q^{\rho_1\rho_2\dots\rho_i}$. 
Then it is not hard to see that in each of the chains
$$
  Q_0 \xrightarrow{\rho_1} Q_1 \xrightarrow{\rho_2}  \dots  \xrightarrow{\rho_{q}} Q_{q}
    \quad\text{and}\quad
  Q_r \xrightarrow{\rho_r^{-1}} Q_{r-1} \xrightarrow{\rho_{r-1}^{-1}} \dots \xrightarrow{\rho_{q+1}^{-1}} Q_{q}
$$
every automorphism $\rho_i^{\pm1}$ can be factored into a sequence of elementary Nielsen automorphisms which
keep the property of a word being quadratic.
\end{proof}

\section{Transforming quadratic words} \label{sec:quadratic-transformations}

In this section, we describe several specific transformations of quadratic words to produce equivalent
quadratic words of a desired form. By a transformation we mean application of an $F_A$-automorphism of 
$F_{A \cup X}$ (or an $F_C$-automorphism of $F_{C\cup X}$ where $F_C$ is the group of formal
coefficients, see below).

By $|W|$ we denote the length of an element of a free group written as a frely reduced word. 
The notation $|W|_x$ will be 
used for the total number of occurrences of a letter $x$ in $W$. More generally, is $S$ is a set of 
letters then $|W|_S$ will denote the total number of occurrences in $W$ of letters from $S$.
Sometimes we consider formal (not necessarily freely reduced) words $W$. In this case, $|W|$ denotes
the length of $W$.

For automorphisms of a free group $F_Y$, we use a notation
$$
  \phi=(x_1^{\ep_1}\mapsto W_1, \ x_2^{\ep_2}\mapsto W_2, \ \dots\ x_k^{\ep_k}\mapsto W_k)
  \quad \text{where} \ x_i\in Y,\ \ep_i=\pm1,\ W_i \in F_Y
$$
which means that $\phi$ maps generator $x_i^{\ep_i}$
 to $W_i$ and does not change other generators.

We describe several types of elementary transformations 
$\phi$ 
applied to a quadratic word $Q \in F_{A\cup X}$ and producing an equivalent quadratic word $Q^\phi$,
or, in a weaker form, an equivalent quadratic word conjugate to the image $Q^\phi$ of $Q$.

{\em Permutations and 
exponent 
sign changes of variables:} Permutations on the set of variables and automorphisms 
of the form $(x \mapsto x^{-1})$. 
These automorphisms 
always carry quadratic words into equivalent ones. We will implicitly 
assume that automorphisms of this type are applied if needed. For example, to transform a word $Q_1$ 
to a given word $Q_2$ it is enough to find an automorphism $\phi \in \Aut_{F_A}(F_{A \cup X})$ such 
that $Q_1^\phi$ is equal to $Q_2$ up to renaming variables and changing their exponent signs.

{\em 
Multiplications 
by coefficients:} Automorphisms of the form $(x^\ep\mapsto x^\ep d)$, $d \in F_A$.
Any automorphism of this form carries any quadratic word to an equivalent one. 
We will use only the special case when $x^\ep d^{-1}$ or
$d x^{-\ep}$ occurs in $Q$ as a subword. We refer to these  transformations as  {\em coefficient shifts}. 
Geometrically, we shift the start of 
the edge labelled $x^\ep$ along the boundary arc of $S_Q$ labelled $d^{-1}$ from the start of this arc to its end
(Fig.\ \ref{fig:coef-shift1}). 
This automorphism does not change $S_Q$ unless the corresponding
boundary 
component is labelled by the trivial element 
and there are two occurrences of $(x^\ep d^{-1})^{\pm1}$ in $Q$
(Fig.\ \ref{fig:coef-shift2}). In this case, this boundary component disappears
(but the topological type of the reduced surface $\bar{S}_Q$ is not changed). 
\begin{figure}[h]% 
\begin{picture}(0,0)%
\includegraphics{fig2.pstex}%
\end{picture}%
\setlength{\unitlength}{0.00087489in}%
\begingroup\makeatletter\ifx\SetFigFont\undefined%
\gdef\SetFigFont#1#2#3#4#5{%
  \reset@font\fontsize{#1}{#2pt}%
  \fontfamily{#3}\fontseries{#4}\fontshape{#5}%
  \selectfont}%
\fi\endgroup%
{\renewcommand{\dashlinestretch}{30}%
\begin{picture}(4225,1609)(0,-10)%
\put(420,1352){\makebox(0,0)[lb]{\smash{{\SetFigFont{11}{13.2}{\rmdefault}{\mddefault}{\updefault}$x^\ep$}}}}
\put(857,1069){\makebox(0,0)[lb]{\smash{{\SetFigFont{11}{13.2}{\rmdefault}{\mddefault}{\updefault}$d$}}}}
\put(3650,1420){\makebox(0,0)[lb]{\smash{{\SetFigFont{11}{13.2}{\rmdefault}{\mddefault}{\updefault}$x^\ep$}}}}
\put(3494,1074){\makebox(0,0)[lb]{\smash{{\SetFigFont{11}{13.2}{\rmdefault}{\mddefault}{\updefault}$d$}}}}
\put(2159,608){\makebox(0,0)[lb]{\smash{{\SetFigFont{11}{13.2}{\rmdefault}{\mddefault}{\updefault}$\longrightarrow$}}}}
\end{picture}
}
\caption{}\label{fig:coef-shift1}
\end{figure}

\begin{figure}[h]% 
\begin{picture}(0,0)%
\includegraphics{fig3.pstex}%
\end{picture}%
\setlength{\unitlength}{0.00087489in}%
\begingroup\makeatletter\ifx\SetFigFont\undefined%
\gdef\SetFigFont#1#2#3#4#5{%
  \reset@font\fontsize{#1}{#2pt}%
  \fontfamily{#3}\fontseries{#4}\fontshape{#5}%
  \selectfont}%
\fi\endgroup%
{\renewcommand{\dashlinestretch}{30}%
\begin{picture}(4453,869)(0,-10)%
\put(929,-40){\makebox(0,0)[lb]{\smash{{\SetFigFont{12}{14.4}{\rmdefault}{\mddefault}{\updefault}$d$}}}}
\put(2042,406){\makebox(0,0)[lb]{\smash{{\SetFigFont{12}{14.4}{\rmdefault}{\mddefault}{\updefault}$\longrightarrow$}}}}
\put(929,700){\makebox(0,0)[lb]{\smash{{\SetFigFont{12}{14.4}{\rmdefault}{\mddefault}{\updefault}$d$}}}}
\put(3154,450){\makebox(0,0)[lb]{\smash{{\SetFigFont{12}{14.4}{\rmdefault}{\mddefault}{\updefault}$x$}}}}
\put(109,450){\makebox(0,0)[lb]{\smash{{\SetFigFont{12}{14.4}{\rmdefault}{\mddefault}{\updefault}$x$}}}}
\end{picture}
}
%\centerline{\includegraphics[scale=0.4]{figure2b.eps}}
\caption{}\label{fig:coef-shift2}
\end{figure}

{\em 
Introductions of
new variables:} Nielsen automorphisms of the form $(x^\ep \mapsto x^\ep y^\de)$ where
a variable $y$ does not occur in $Q$. 

{\em Related Nielsen automorphisms:} 
Nielsen automorphisms $(x^\ep \mapsto x^\ep y^\de)$ 
{\em related to $Q$}, that is, those for which
$(x^\ep y^{-\de})^{\pm1}$ occurs in $Q$. Geometrically, 
we shift the start of the edge labelled ~$x^\ep$ along the edge labelled $y^\de$, see Fig.~\ref{fig:var-shift}.
If $(x^\ep y^{-\de})^{\pm1}$ occurs in $Q^{\pm1}$ twice then 
$y$ is eliminated from $Q$. If we view $Q$ as a cyclic word then for any Nielsen automorphism $\rho$
related to $Q$, the image $Q^\rho$ is conjugate to a quadratic word equivalent to $Q$.

\begin{figure}[h]
\begin{picture}(0,0)%
\includegraphics{fig4.pstex}%
\end{picture}%
\setlength{\unitlength}{0.00087489in}%
\begingroup\makeatletter\ifx\SetFigFont\undefined%
\gdef\SetFigFont#1#2#3#4#5{%
  \reset@font\fontsize{#1}{#2pt}%
  \fontfamily{#3}\fontseries{#4}\fontshape{#5}%
  \selectfont}%
\fi\endgroup%
{\renewcommand{\dashlinestretch}{30}%
\begin{picture}(6234,984)(0,-10)
\put(552,102){\makebox(0,0)[lb]{\smash{{\SetFigFont{12}{14.4}{\rmdefault}{\mddefault}{\updefault}$y^\de$}}}}
\put(2082,102){\makebox(0,0)[lb]{\smash{{\SetFigFont{12}{14.4}{\rmdefault}{\mddefault}{\updefault}$y^\de$}}}}
\put(642,597){\makebox(0,0)[lb]{\smash{{\SetFigFont{12}{14.4}{\rmdefault}{\mddefault}{\updefault}$x^\ep$}}}}
\put(2217,597){\makebox(0,0)[lb]{\smash{{\SetFigFont{12}{14.4}{\rmdefault}{\mddefault}{\updefault}$x^\ep$}}}}
\put(1272,237){\makebox(0,0)[lb]{\smash{{\SetFigFont{12}{14.4}{\rmdefault}{\mddefault}{\updefault}$\rightarrow$}}}}
\put(2982,237){\makebox(0,0)[lb]{\smash{{\SetFigFont{12}{14.4}{\rmdefault}{\mddefault}{\updefault}or}}}}
\put(3657,102){\makebox(0,0)[lb]{\smash{{\SetFigFont{12}{14.4}{\rmdefault}{\mddefault}{\updefault}$x^\ep$}}}}
\put(4692,237){\makebox(0,0)[lb]{\smash{{\SetFigFont{12}{14.4}{\rmdefault}{\mddefault}{\updefault}$\rightarrow$}}}}
\put(5457,102){\makebox(0,0)[lb]{\smash{{\SetFigFont{12}{14.4}{\rmdefault}{\mddefault}{\updefault}$x^\ep$}}}}
\put(4152,102){\makebox(0,0)[lb]{\smash{{\SetFigFont{12}{14.4}{\rmdefault}{\mddefault}{\updefault}$y^\de$}}}}
\end{picture}%
}%
\caption{}\label{fig:var-shift}
\end{figure}

It is sometimes convenient to view  transformations of quadratic words from a slightly different 
angle---through formal coefficients. This means that
we introduce a new alphabet ~$C$ of 
{\em formal coefficients}
and consider quadratic words 
in $Q \in F_{C \cup X}$ with the property that each coefficient letter $c\in C$ occurs in $Q$ 
at most once. To get a quadratic word in the usual sense (as an element of $F_{A\cup X})$ we have to
provide it with a {\em coefficient map} $F_C \to F_A$. 

Thus we have two ways of representing quadratic words: as an element of $F_{A\cup X}$ and as a pair
$(Q,\ga)$ where $Q$ is a quadratic word with formal coefficients and $\ga$ is a coefficient map.
An advantage of the second way is that quadratic words can be viewed independently on 
the coefficient group which may be not necessarily free.
We use this representation
since it provides
a more convenient accounting of lengths of coefficients occurring in transformations.% 

Note that all elementary transformations introduced above are applicable also to quadratic words with
formal 
coefficients (just with
$F_A$ replaced by the formal coefficient group ~$F_C$). If 
a pair $(Q,\ga)$ represents a quadratic word $R \in F_{A \cup X}$ then transformations of $Q$ 
induce corresponding transformations of $R$. 

In the rest of the section we assume that quadratic 
words are ones with formal coefficients and belong to the group $F_{C \cup X}$. 

We prove a series of statements 
asserting 
that a quadratic word can be reduced to a standard form
using 
an automorphism
of bounded complexity. 
Note that the notion of a standard quadratic word 
is essentially not changed when passing to words with formal coefficients. 
It is required only that any formal coefficient occurs in the coefficients $c_i$ of a standard quadratic word
at most once in total.

\begin{definition}
We call automorphisms of the form $(x^\ep \to W x^\ep)$ where $x$ does not occur in ~$W$, {\em elementary}.

We say that an elementary automorphism $(x^\ep \mapsto W x^\ep)$ {\em changes} $x$ and 
{\em touches} variables occurring in ~$W$.

A product $\rho_1\rho_2\dots\rho_k$ of elementary automorphisms $\rho_i$ is {\em triangular} if it satisfies the
following condition: as soon as $\rho_i$ touches $x$, the variable $x$ is not changed by all subsequent 
automorphisms $\rho_{i+1}$, $\dots$, $\rho_k$. 
\end{definition}

The following observation is immediate:

\begin{lemma} \label{lm:triangular-bound}
Let 
$$
\phi = (x_1^{\ep_1} \to W_1 x_1^{\ep_1}) (x_2^{\ep_2} \to W_2 x_2^{\ep_2}) \dots (x_k^{\ep_k} \to W_k x_k^{\ep_k})
$$ 
be a triangular product of elementary automorphisms of a free group $F$.
For a generator ~$x$, let $W_{i_1}$, $W_{i_2}$, $\dots$, $W_{i_r}$ be all words $W_i$ 
which participate in automorphisms $(x_i \to W_i x_i)$ with $x_i=x$ and $\ep_i = 1$,
and $W_{j_1}$, $W_{j_2}$, $\dots$, $W_{j_t}$ be all words $W_i$ 
which participate in automorphisms $(x_i^{-1} \to W_i x_i^{-1})$ with $x_i=x$ and $\ep_i = -1$.
Then
$$
%  |x^\phi|_X \le \sum_j |W_{i_j}|_X + 1 \quad\text{and}\quad |x^\phi|_A \le \sum_j |W_{i_j}|_A.
  x^\phi = W_{i_1} W_{i_2} \dots W_{i_r} x W_{j_t}^{-1} W_{j_{t-1}}^{-1} \dots W_{j_1}^{-1}.
$$
In particular, for any generator $y\ne x$, the number of occurrences of $y$ in $x^\phi$ 
does not exceed the total number of occurrences of $y$ in all $W_{i_k}$ and $W_{j_k}$.
\end{lemma}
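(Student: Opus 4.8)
The plan is to establish the displayed formula for $x^\phi$ by induction on the number $k$ of elementary factors; the final ``in particular'' assertion is then immediate, since in the word $W_{i_1}\dots W_{i_r}\,x\,W_{j_t}^{-1}\dots W_{j_1}^{-1}$ every occurrence of a letter $y\ne x$ lies inside one of the $W_{i_\bullet}$ or $W_{j_\bullet}$, so the number of such occurrences before free reduction equals $\sum_\bullet|W_{i_\bullet}|_y+\sum_\bullet|W_{j_\bullet}|_y$, and free reduction can only decrease it.

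For the base case $k=0$ we have $\phi=\mathrm{id}$, so $x^\phi=x$ with no factors $W_i$, as claimed. For the inductive step I would peel off the last factor from the right: write $\phi=\phi''\rho_k$ with $\phi''=\rho_1\dots\rho_{k-1}$ and $\rho_k=(x_k^{\ep_k}\to W_k x_k^{\ep_k})$, so that $x^\phi=(x^{\phi''})^{\rho_k}$. A prefix of a triangular product is again triangular, so the induction hypothesis applies to $\phi''$ and expresses $x^{\phi''}$ in the stated form, using only the indices $\le k-1$ of the appropriate types. It then remains to see how $\rho_k$ acts on this word, in the three cases (a) $x_k=x$, $\ep_k=1$; (b) $x_k=x$, $\ep_k=-1$; (c) $x_k\ne x$.

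The single observation that drives all three cases is a direct consequence of triangularity: if $\rho_k$ changes a variable $v$, then no earlier $\rho_m$ with $m<k$ can touch $v$ (otherwise $v$ would not be changed by the later $\rho_k$), hence $v$ occurs in none of $W_1,\dots,W_{k-1}$. In cases (a) and (b) we apply this with $v=x$: every factor $W_{i_\bullet},W_{j_\bullet}$ appearing in $x^{\phi''}$ is then fixed by $\rho_k$ (which moves only $x$), while $x^{\rho_k}=W_k x$ in case (a) and $x^{\rho_k}=xW_k^{-1}$ in case (b); substituting and observing that $k$ is the new largest index of its type inserts $W_k$ in exactly the correct slot and orientation, yielding the formula for $\phi$. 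In case (c) we apply the observation with $v=x_k$: since $x_k\ne x$ and $x_k$ occurs in no $W_m$ with $m<k$, the automorphism $\rho_k$ fixes every letter of $x^{\phi''}$, so $x^\phi=x^{\phi''}$, which is already the asserted formula because the index $k$ contributes no new factor.

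I expect the only delicate point to be the index bookkeeping---checking that the lists of relevant indices for $\phi$ are precisely those for $\phi''$ with $k$ possibly appended at the end of the correct list, and that the orientation convention (factors $W_i$ on the left for $\ep_i=1$, inverted factors on the right read in reverse order for $\ep_i=-1$) is consistent with the convention that $x^\phi$ is obtained by applying $\rho_1$ first. There is no genuine obstacle here: triangularity is exactly the hypothesis that prevents $\rho_k$ from disturbing the factors produced at earlier stages, which is what makes the induction close.
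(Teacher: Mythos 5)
Your proof is correct. The paper states this lemma without proof (``The following observation is immediate''), and your induction on $k$ --- peeling off $\rho_k$, noting that triangularity (together with the fact that $x$ does not occur in any $W_m$ with $x_m=x$) forces $\rho_k$ to fix all factors already present in $x^{\rho_1\cdots\rho_{k-1}}$ --- is exactly the verification the authors leave to the reader, including the remark that free reduction can only decrease the count of occurrences of $y$.
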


\begin{proposition} \label{prop:standard-red-bound-orientable}
Let $Q \in F_{C\cup X}$ be an orientable quadratic word. 
Then there exists an automorphism $\phi \in \Aut_{F_C}(F_{C\cup \Var(Q)})$
such that $Q^\phi$ is conjugate to a standard quadratic word equivalent to ~$Q$ and for any
$x,y \in \Var(Q)$ and $c \in C$, we have $|x^\phi|_y \le 4$ and $|x^\phi|_c \le 2$.

In particular, $|x^\phi| \le 2|Q|$ for any $x\in \Var(Q)$, 
\end{proposition}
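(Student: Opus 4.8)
\textit{Proof plan.} The goal is to build $\phi$ as a composition of permutations and exponent‑sign changes of variables---which do not affect any of the quantities $|x^\phi|_y$ or $|x^\phi|_c$---together with a \emph{triangular} product of elementary automorphisms, each of them one of the three special kinds described above: a coefficient shift $(x^\ep\to x^\ep d)$, a Nielsen automorphism $(x^\ep\mapsto x^\ep y^\de)$ related to the current word, or (transiently) the introduction of an auxiliary variable. Granting such a factorization, Lemma~\ref{lm:triangular-bound} does the rest: for fixed variables $x,y$ it bounds $|x^\phi|_y$ by the total number of occurrences of $y$ in the touch words $W_i$ of the elementary factors that change $x$, and likewise for a coefficient letter $c$. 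For related Nielsen automorphisms and (formal) coefficient shifts each such $W_i$ is a single letter, so the whole matter reduces to counting, over the entire process, how often the edge labelled $x$ is shifted across the edge labelled $y$, and how often it is shifted across the coefficient letter $c$.

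\textit{The reduction.} I would run the classical proof of the classification of compact surfaces, but with attention to the order of moves, arguing by induction on $n(Q)$. If $Q$ lies in $F_C$ (no variables) it is already standard, and there is nothing to do. Otherwise one of the familiar situations occurs, and in each I claim it is realised by a \emph{bounded block} of elementary moves of the allowed types, after which some variables have been pushed into the standard part of the word---placed inside a freshly formed leading commutator $[x_i,y_i]$, or eliminated from $Q$---and are \emph{never changed again}; applying the induction hypothesis to the strictly shorter quadratic word on the remaining variables and concatenating the blocks keeps the product triangular. The blocks are: (a) \emph{coefficient isolation}, using coefficient shifts and related Nielsen automorphisms to move every coefficient letter into a subword $z^{-1}cz$ without disturbing commutators already formed; (b) \emph{handle formation}, where, when the cyclic word contains an interleaved pattern $x\dots y\dots x^{-1}\dots y^{-1}$, a short sequence of related Nielsen automorphisms gathers this pair into a leading commutator and ejects the interlaced letters to the right; (c) \emph{variable elimination}, where the two occurrences of some variable already bound a sub‑quadratic word and a single related Nielsen automorphism (or coefficient shift) deletes it.

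\textit{Counting and the obstacle.} Inside the block that finalises a given variable $x$, the two occurrences of $x$ (that is, $x$ and $x^{-1}$) are each shifted across each of the two occurrences of any other variable $y$ at most once, giving at most $4$ occurrences of $y$ among the touch words changing $x$; and each coefficient letter $c$, being a single boundary arc, is crossed by each of the two occurrences of $x$ at most once while its edge is moved to its final place in blocks (a)--(b), giving at most $2$. Since $x$ itself does not occur in any touch word, $|x^\phi|_x=1$, and Lemma~\ref{lm:triangular-bound} yields
$$
  |x^\phi| \;=\; |x^\phi|_x+\sum_{y\neq x}|x^\phi|_y+\sum_{c\in C}|x^\phi|_c
  \;\le\; 1+4\bigl(n(Q)-1\bigr)+2c(Q)\;\le\;2\bigl(2n(Q)+c(Q)\bigr)\;=\;2|Q|.
$$
The delicate point---and the real content of the proposition beyond the classical classification---is the scheduling: one must arrange the coefficient‑isolation and handle‑formation blocks so that the product stays triangular (nothing already placed into the standard part is ever touched again) while simultaneously keeping, inside the block that finalises each $x$, every other variable used at most $4$ times and every coefficient letter at most twice in the touch words. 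Verifying these crossing counts move by move is where the work lies, and it is precisely what the ``economical'' refinements promised in the introduction are designed to supply.
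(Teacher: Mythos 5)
Your framework is the right one---the paper also builds $\phi$ as a triangular product of elementary automorphisms and extracts the bounds from Lemma~\ref{lm:triangular-bound}, and your arithmetic $1+4(n(Q)-1)+2c(Q)\le 2|Q|$ matches the final step. But the proposal stops exactly where the proposition begins: you state the required crossing counts as a claim about a schedule of moves, and then concede in your last paragraph that verifying the schedule ``is where the work lies.'' That verification is the entire content of the statement beyond the classical classification theorem, so as written this is a plan, not a proof.

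The concrete obstruction sits in your block (b), handle formation. To gather an interleaved pair into a leading commutator $[x_i,y_i]$ one uses automorphisms of the shape $(x_i\mapsto Z x_i)$ where $Z$ runs over the interlaced subwords; these \emph{touch} every variable occurring in the $Z$'s. Under the triangularity convention, a touched variable may never be changed again---yet those interlaced variables are precisely the ones that must still be moved in later handle-formation blocks. So the naive scheduling either breaks triangularity or, if you instead let each later block re-touch surviving variables, accumulates up to $O(g)$ occurrences of a fixed $y$ in the touch words changing $x$, destroying the constant bound $|x^\phi|_y\le 4$. The paper's resolution is the star word $R^*$ together with the duality $(x^\ep\mapsto x^\ep y^{-\de})^*=(y^\de\mapsto x^\ep y^\de)$: the commutator for the pair $(x_i,y_i)$ is collected by manipulating $R^*$, and the dual sequence acting on the actual word $T$ \emph{touches only $x_i$ and $y_i$} while changing the interlaced variables. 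Hence a fixed $y$ appears in touch words only at the single stage where $y$ itself is locked, which is what makes the count $\le 4$ (and $\le 2$ per coefficient letter) go through. Your proposal contains no substitute for this device, so the gap is genuine.
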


\begin{proof}
We construct the required automorphism $\phi$ as
a triangular product of elementary automorphisms.
Starting from this point 
throughout the proof, we denote by ~$Q$ 
the current quadratic word after application of a sequence of elementary automorphisms 
constructed so far. At the start, $Q$ 
is any quadratic word from 
the hypothesis of the proposition.
We view ~$Q$ as a cyclic word and thus 
regard transformations up to conjugation.

We assume that $Q$ has at least one variable (otherwise the proposition is trivial).

At any moment, 
there are {\em locked} variables in $Q$ which have been touched by previous
elementary automorphisms.
% and the remaining ones termed {\em unlocked}. 
They should not be changed by subsequent elementary automorphisms.
%Notice that locked variables are not changed during a triangular sequence of transformations.

By $\Ga_Q$ we denote the 1-skeleton of $S_Q$, i.e.\ the graph embedded in the surface after 
the identification of arcs in the boundary of disk $D$ as described in Section \ref{sec:surfaces}.

{\em Step}\/ 1: Eliminating boundary superfluous vertices.

If $S_Q$ is a closed surface then Step 1 is void and we jump to Step 2. 
Recall that $S_Q$ is closed if and only if $Q$ is coefficient-free.

We call a vertex $\nu$ in the boundary of $S_Q$ {\em essential} 
if it is an endpoint of an interior edge of $S_Q$ 
(i.e.\ an endpoint of an edge 
labelled by a variable). 

Let $\ell$ be a boundary 
component
of $S_Q$. 
Observe that $\ell$ has at least one essential vertex (since $Q$ has at least one variable).
We describe a sequence of elementary automorphisns which result in exactly one essential vertex in $\ell$.
Let $\nu_1$, $\dots$, $\nu_k$ be all cyclically ordered essential vertices in $\ell$.
Let $c\in F_A$ be the label of the arc between $\nu_1$ and $\nu_2$ and $x_1,\dots,x_r \in X^{\pm1}$ be
labels of edges starting at $\nu_2$ so that $c x_1$, $x_1^{-1} x_2$, \dots, $x_{r-1}^{-1} x_r$ 
occur in $Q$
(see Fig.\ \ref{fig:nonessential-vertex}). We apply to ~$Q$ a sequence of 
elementary 
automorphisms
$$
  (x_1\mapsto c^{-1} x_1) (x_2\mapsto c^{-1} x_2) \dots (x_r\mapsto c^{-1} x_r) 
$$
eliminating essential vertex $\nu_2$. Then we proceed in the same way 
eliminating 
all other 
essential
vertices $\nu_3,\dots,\nu_k$.

\begin{figure}[h]% 
\begin{picture}(0,0)%
\includegraphics{fig5.pstex}%
\end{picture}%
\setlength{\unitlength}{0.00087489in}%
\begingroup\makeatletter\ifx\SetFigFont\undefined%
\gdef\SetFigFont#1#2#3#4#5{%
  \reset@font\fontsize{#1}{#2pt}%
  \fontfamily{#3}\fontseries{#4}\fontshape{#5}%
  \selectfont}%
\fi\endgroup%
{\renewcommand{\dashlinestretch}{30}%
\begin{picture}(3984,958)(0,-10)%
\put(282,75){\makebox(0,0)[lb]{\smash{{\SetFigFont{12}{14.4}{\rmdefault}{\mddefault}{\updefault}$\nu_1$}}}}
\put(2712,75){\makebox(0,0)[lb]{\smash{{\SetFigFont{12}{14.4}{\rmdefault}{\mddefault}{\updefault}$\nu_1$}}}}
\put(607,75){\makebox(0,0)[lb]{\smash{{\SetFigFont{12}{14.4}{\rmdefault}{\mddefault}{\updefault}$c$}}}}
\put(3037,75){\makebox(0,0)[lb]{\smash{{\SetFigFont{12}{14.4}{\rmdefault}{\mddefault}{\updefault}$c$}}}}
\put(1047,15){\makebox(0,0)[lb]{\smash{{\SetFigFont{12}{14.4}{\rmdefault}{\mddefault}{\updefault}$\nu_2$}}}}
\put(3472,15){\makebox(0,0)[lb]{\smash{{\SetFigFont{12}{14.4}{\rmdefault}{\mddefault}{\updefault}$\nu_2$}}}}
\put(507,515){\makebox(0,0)[lb]{\smash{{\SetFigFont{12}{14.4}{\rmdefault}{\mddefault}{\updefault}$x_1$}}}}
\put(972,525){\makebox(0,0)[lb]{\smash{{\SetFigFont{12}{14.4}{\rmdefault}{\mddefault}{\updefault}$x_r$}}}}
\put(2892,565){\makebox(0,0)[lb]{\smash{{\SetFigFont{12}{14.4}{\rmdefault}{\mddefault}{\updefault}$x_1$}}}}
\put(3387,525){\makebox(0,0)[lb]{\smash{{\SetFigFont{12}{14.4}{\rmdefault}{\mddefault}{\updefault}$x_r$}}}}
\put(722,615){\makebox(0,0)[lb]{\smash{{\SetFigFont{12}{14.4}{\rmdefault}{\mddefault}{\updefault}...}}}}
\put(3146,610){\makebox(0,0)[lb]{\smash{{\SetFigFont{12}{14.4}{\rmdefault}{\mddefault}{\updefault}...}}}}
\put(1767,435){\makebox(0,0)[lb]{\smash{{\SetFigFont{12}{14.4}{\rmdefault}{\mddefault}{\updefault}$\longrightarrow$}}}}
\end{picture}
}
\caption{}\label{fig:nonessential-vertex}
\end{figure}

% The label of $\ell$ (viewed as an element of $F_Ñ$) is not changed during this transformation sequence. 
% If this label is a nontrivial element of $F_Ñ$ then after elimination, 
% $\ell$ will have precisely one essential vertex.
% 
% Suppose that $\ell$ is labelled by the trivial element of $F_Ñ$. Then at the start, 
% the number ~$k$ of essential vertices on $\ell$ should be at least 2. 
% At the 
% moment
% when only two essential vertices ~$\nu_1$ and $\nu_2$ remain,
% they divide $\ell$ into two arcs labelled $c$ and $c^{-1}$ for some $c \in F_A$. After elimination
% of $\nu_2$, the boundary component of $S_Q$ with boundary loop $\ell$ disappears.
% 

We repeat the procedure for all other boundary components of $S_Q$. After that,
each boundary component will have exactly one essential vertex. Their labels are the standard
coefficients of $Q$. 
There are no locked variables after this step.

{\em Step}\/ 2: Eliminating inner superfluous vertices.

We choose a base vertex $\nu_1$ of $\Ga_Q$ as follows. If $Q$ is coefficient-free we take any vertex of ~$\Ga_Q$.
If $Q$ has a coefficient then for the base vertex 
we take any essential vertex in the boundary of ~$S_Q$.

Suppose that $\Ga_Q$ has a vertex $\nu\ne \nu_1$ in the interior of $S_Q$. 
Let $e_1$, $\dots$, $e_k$ be all directed edges starting at $\nu$,
labelled by variables $x_1, \dots, x_k \in X^{\pm1}$.
Since $\nu$ and $\nu_1$ are connected by a path in ~$\Ga_Q$, 
at least one of $e_i$, say $e_1$, ends in a vertex $\nu'$ distinct from $\nu$. 
Then we apply a sequence of elementary automorphisms
$$
  \psi = (x_2\mapsto x_1^{-1} x_2) \dots (x_k\mapsto x_1^{-1} x_k)
$$
eliminating $\nu$.
Observe that $x_1$ does not occur in the new word $Q^\psi$, so the locked variable ~$x_1$ will not participate 
in the subsequent automorphisms.

We perform elimination of all vertices $\nu\ne \nu_1$
in the interior of $S_Q$.
After that, if $Q$ is coefficient-free
then $\Ga_Q$ has only one vertex ~$\nu_1$. 
If $Q$ has a coefficient then $\Ga_Q$ has $m$ vertices 
$\nu_1$, $\dots$, $\nu_{m}$, one in each boundary component of $S_Q$.

{\em Step}\/ 3: Collecting coefficient factors. 

We assume here that $Q$ has at least one coefficient. If $Q$ is coefficient-free, we jump to Step 4.

The step consists of a sequence of substeps $3_1$, $3_2$, $\dots$, $3_{m-1}$.
Before step $3_i$,  $Q$ has the form
$$
  Q = c_1 z_2^{-1} c_2 z_2 z_3^{-1} c_3 z_3 \dots z_{i}^{-1} c_{i} z_{i} W
$$
where $z_j \in X^{\pm1}$ and $c_j \in F_A$ and $W$ has no locked variables. 
Recall that $Q$ is viewed as a cyclic word, so at the start of step 3 we
have $Q = c_1 W$ for some $W$ and coefficient $c_1$.

If no coefficients occur in $W$ then we stop. 
Suppose that a coefficient occurs in $W$. 
Since $\Ga_Q$ is connected, there is an edge in $\Ga_Q$ starting at $\nu_1$ and ending 
in a boundary component of ~$S_Q$ distinct from ones labelled by 
$c_1$, $c_2$, $\dots$, $c_{i}$. Let $z_{i+1} \in X^{\pm1}$ be such an edge 
(for convenience we identify edges with their labels) and
$\nu_{i+1}$ its endpoint in a boundary component labelled ~$c_{i+1}$. 

Step $3_i$ consists of the following.
Using related Nielsen automorphisms and coefficient shifts, 
we first shift starting vertices of all interior edges of $S_Q$ at $\nu_{i+1}$ other 
than $z_{i+1}^{-1}$ to a new position at $\nu_1$ along 
the path labelled $z_{i+1}^{-1}$ or the path labelled $c_{i+1} z_{i+1}^{-1}$ (see Fig.~\ref{fig:step3i}).
Next, if there are edges starting at $\nu_1$ between $z_{i}$ and $z_{i+1}$, we shift
them one by one along the path labelled $z_{i+1} c_{i+1} z_{i+1}^{-1}$.

\begin{figure}[h] 
\begin{picture}(0,0)%
\includegraphics{fig6.pstex}%
\end{picture}%
\setlength{\unitlength}{0.00087489in}%
\begingroup\makeatletter\ifx\SetFigFont\undefined%
\gdef\SetFigFont#1#2#3#4#5{%
  \reset@font\fontsize{#1}{#2pt}%
  \fontfamily{#3}\fontseries{#4}\fontshape{#5}%
  \selectfont}%
\fi\endgroup%
{\renewcommand{\dashlinestretch}{30}%
\begin{picture}(6864,1977)(0,-10)
\put(1883,1100){\makebox(0,0)[lb]{\smash{{\SetFigFont{12}{14.4}{\rmdefault}{\mddefault}{\updefault}$\longrightarrow$}}}}
\put(552,1227){\makebox(0,0)[lb]{\smash{{\SetFigFont{12}{14.4}{\rmdefault}{\mddefault}{\updefault}$\nu_{i+1}$}}}}
\put(10,777){\makebox(0,0)[lb]{\smash{{\SetFigFont{12}{14.4}{\rmdefault}{\mddefault}{\updefault}$z_{i+1}$}}}}
\put(1877,327){\makebox(0,0)[lb]{\smash{{\SetFigFont{12}{14.4}{\rmdefault}{\mddefault}{\updefault}$c_i$}}}}
\put(4377,1100){\makebox(0,0)[lb]{\smash{{\SetFigFont{12}{14.4}{\rmdefault}{\mddefault}{\updefault}$\longrightarrow$}}}}
\put(472,192){\makebox(0,0)[lb]{\smash{{\SetFigFont{12}{14.4}{\rmdefault}{\mddefault}{\updefault}$z_i$}}}}
\put(1047,1822){\makebox(0,0)[lb]{\smash{{\SetFigFont{12}{14.4}{\rmdefault}{\mddefault}{\updefault}$c_{i+1}$}}}}
\end{picture}
}
\caption{}\label{fig:step3i}
\end{figure}

After that, $Q$ gets the form
$$
  Q = c_1 z_2^{-1} c_2 z_2 z_3^{-1} c_3 z_3 \dots z_{i+1}^{-1} c_{i+1} z_{i+1} W'
$$
and we iterate the procedure. Finally we come to a word of the form
$$
  Q = c_1 z_2^{-1} c_2 z_2 \dots z_{m}^{-1} c_{m} z_{m} R
$$
where $R$ is a coefficient-free quadratic word with no locked variables.
If $R$ is empty we get the desired standard quadratic word.
Otherwise we proceed to the next step ~4.

Observe that all edges of $\Ga_Q$ labelled by variables occuring in $R$ start and end at 
the same vertex $\nu_1$.
In this case, we define {\em a star word} $R^*$ as the sequence of labels of edges labelled by variables
in ~$R$ when moving around $\nu_1$ in a small neighborhood of $\nu_1$. To fix the direction
of the motion  we agree that if $x^\ep y^\de$ occurs
in $R^*$ then $y^{-\de} x^\ep$ occurs in ~$R$. If $Q$ has a coefficient then $R^*$ is the word read off
between the edges of $\Ga_Q$ labelled the starting letter of $c_1$ and ~$z_{m}$ 
(see Fig.\ \ref{fig:star-word}).
If $Q$ is coefficient-free then $R=Q$ and we view $R^*$ as a cyclic word.

\begin{figure}[h] 
\begin{picture}(0,0)%
\includegraphics{fig7.pstex}%
\end{picture}%
\setlength{\unitlength}{0.00087489in}%
\begingroup\makeatletter\ifx\SetFigFont\undefined%
\gdef\SetFigFont#1#2#3#4#5{%
  \reset@font\fontsize{#1}{#2pt}%
  \fontfamily{#3}\fontseries{#4}\fontshape{#5}%
  \selectfont}%
\fi\endgroup%
{\renewcommand{\dashlinestretch}{30}%
\begin{picture}(2799,1790)(0,-10)
\put(0,970){\makebox(0,0)[lb]{\smash{{\SetFigFont{12}{14.4}{\rmdefault}{\mddefault}{\updefault}$c_2$}}}}
\put(500,1650){\makebox(0,0)[lb]{\smash{{\SetFigFont{12}{14.4}{\rmdefault}{\mddefault}{\updefault}$c_m$}}}}
\put(820,435){\makebox(0,0)[lb]{\smash{{\SetFigFont{12}{14.4}{\rmdefault}{\mddefault}{\updefault}$z_1$}}}}
\put(1000,805){\makebox(0,0)[lb]{\smash{{\SetFigFont{12}{14.4}{\rmdefault}{\mddefault}{\updefault}$z_m$}}}}
\put(1395,1090){\makebox(0,0)[lb]{\smash{{\SetFigFont{12}{14.4}{\rmdefault}{\mddefault}{\updefault}$x_r^{\ep_r}$}}}}
\put(2150,630){\makebox(0,0)[lb]{\smash{{\SetFigFont{12}{14.4}{\rmdefault}{\mddefault}{\updefault}$x_1^{\ep_1}$}}}}
\put(2445,175){\makebox(0,0)[lb]{\smash{{\SetFigFont{12}{14.4}{\rmdefault}{\mddefault}{\updefault}$c_1$}}}}
\put(1850,990){\makebox(0,0)[lb]{\smash{{\SetFigFont{12}{14.4}{\rmdefault}{\mddefault}{\updefault}$x_2^{\ep_2}$}}}}
\put(1685,1090){\makebox(0,0)[lb]{\smash{{\SetFigFont{12}{14.4}{\rmdefault}{\mddefault}{\updefault}...}}}}
\put(935,690){\makebox(0,0)[lb]{\smash{{\SetFigFont{12}{14.4}{\rmdefault}{\mddefault}{\updefault}...}}}}
\end{picture}
}
$$
  R^* = x_1^{\ep_1} x_2^{\ep_2} \dots x_r^{\ep_r}
$$
\caption{}\label{fig:star-word}
\end{figure}

Observe that $R^*$ is an orientable quadratic word of the same length and with the 
same variables as $R$.

{\em Step}\/ 4: Collecting commutators. 
We assume that $Q$ has a coefficient; the coefficient-free case is similar with obvious minor changes.

At any stage of this step, we do not change the coefficient part of $Q$ already collected, 
so $Q$ has the above form
$$
   Q = c_1 z_2^{-1} c_2 z_2 \dots z_{m}^{-1} c_{m} z_{m} R
$$
where $R$ is a coefficient-free quadratic word. We will apply only related Nielsen automorphisms 
involving variables of $R$.
This implies that all edges of $\Ga_Q$ labelled by variables in $R$ 
start and end at the same vertex $\nu_1$.

The whole step 4 is again an iterative sequence of smaller steps $4_1$, $\dots$, $4_g$. 
Before substep ~$4_i$, we have
$$
  R =  [x_1,y_1] \dots [x_{i-1},y_{i-1}]T
$$
and 
$$
  R^* = T^* [y_{i-1}, x_{i-1}^{-1}] \dots [y_1,x_1^{-1}] 
$$
where $T$ and $T^*$ are orientable quadratic words with $\Var(T) = \Var(T^*)$; in particular, $|T|=|T^*|$.

We stop if $T$ and $T^*$ are empty. 

Assume that $T$ and $T^*$ are nonempty. Let $x_i$ be a variable occurring in ~$T$, so 
$$
  T^* = U x_i V x_i^{-1} W
$$ 
up to the exponent sign of $x_i$.

We claim that at least one variable $y_i$ occurs in $V$ exactly once. Indeed, if $V$ is
a (possibly empty) quadratic word then a quadratic word ~$\bar{V}$ lies between the 
two occurrences of ~$x_i$ in ~$T$, that is, $T = \dots x_i^{\pm1} \bar{V} x_i^{\mp1} \dots$. 
In this case the edge of $\Ga_Q$ labelled $x_i$ would wave distinct endpoints, a contradiction. 

Hence, up to interchanging $x_i$ and $y_i$ and changing their exponent signs we have
$$
  T^* = Z_1 y_i^{-1} Z_2 x_i Z_3 y_i Z_4 x_i^{-1} Z_5
$$

The following sequence of automorphisms collects the commutator $[y_i, x_i^{-1}]$
when applied to $T^*$:
\begin{align*}
T^* \xrightarrow{(x_i \mapsto Z_5 x_i)}\quad 
  &Z_1 y_i^{-1} Z_2 Z_5 x_i Z_3 y_i Z_4 x_i^{-1} \\
\xrightarrow{(y_i \mapsto Z_2 Z_5 u_i)} \quad 
  &Z_1 y_i^{-1} x_i Z_3 Z_2 Z_5 y_i Z_4 x_i^{-1} \\
\xrightarrow{(x_i \mapsto x_i (Z_3 Z_3 Z_5)^{-1})}\quad 
  &Z_1 y_i^{-1} x_i y_i Z_4 Z_3 Z_2 Z_5 x_i^{-1} \\
\xrightarrow {(y_i \mapsto y_i (Z_4 Z_3 Z_3 Z_5)^{-1})}\quad 
  &Z_1 Z_4 Z_3 Z_2 Z_5 y_i^{-1} x_i y_i x_i^{-1} 
\end{align*}

It is straightforward to check that for any Nielsen automorphism $\rho$ related to $T^*$ there 
is a dual Nielsen automorphism $\rho^*$
related to $T$ whose action on $T$ agrees with the action of ~$\rho$ on $T^*$, 
that is, $(T^*)^\rho = (T^{\rho^*})^*$:
If $x^\ep y^\de$ occurs in $T^*$ then we define
$$
  (x^\ep \mapsto x^\ep y ^{-\de})^* = (y^\de \mapsto x^\ep y^\de) \quad\text{and}\quad
  (y^\de \mapsto x^{-\ep} u^\de)^*=(x^\ep \mapsto y^\de x^\ep).
$$
We observe that if $\rho$ changes $x$ and touches $y$, then the role of these variables in $\rho^*$ is
interchanged. 
This implies that there is a sequence $\psi$ of related Nielsen
automorphisms touching only $x_i$ and $y_i$ which transforms~$T$ to a word $T^\psi$ where 
$$
  (T^\psi)^* =  Z [y_i, x_i^{-1}].
$$
Then for some $T'$,
$$
  T^\psi = [x_i,y_i] T'
$$
as required. This finishes step $4_i$.

After completion of this procedure $Q$ gets the standard form. We get an automorphism $\phi$ 
reducing to such a form an arbitrary quadratic word from the hypothesis of the proposition.

{\em Calculation of the bounds for $\phi$}.
Fix a variable $x \in \Var(Q)$. 
Let $W_1$, $W_2$, $\dots$ $W_k$ be the list of all words $W$ in elementary
automorphisms $(x^\ep \to W x^\ep)$ which are the factors of ~$\phi$. 
We claim that any variable ~$y \ne x$ occurs in all $W_i$ in total at most 4 times.
Indeed, at each individual step 2, 3 or 4, $y$ occurs at most twice in the automorphisms
of the form $(x \to W x)$ and at most twice in the automorphisms of the form $(x^{-1} \to W x^{-1})$. 
It remains to 
observe that if $y$ participates (i.e.\ occurs in some $W$) at step 2 or 3 then $y$ does not participate
in subsequent steps. By Lemma \ref{lm:triangular-bound} we get
$$
  |x^\phi|_y \le 4.
$$
From the construction it is easy see also that each constant $c \in C$ occurs in 
$|x^\phi|$ at most twice. This implies 
$$
  |x^\phi|_c \le 2.
$$
This finishes the proof of Proposition \ref{prop:standard-red-bound-orientable}.
\end{proof}

\begin{definition} \label{def:semi-standard}
We call a non-orientable quadratic word $Q$ {\em semi-standard} if $Q$ has one of the following forms
$$
  Q = x_1^2 x_2^2 \dots x_k^2 [x_{k+1}, y_{k+1}] \dots [x_{n}, y_{n}]
$$
or 
$$
  Q = x_1^2 x_2^2 \dots x_k^2 [x_{k+1}, y_{k+1}] \dots [x_{n}, y_{n}] 
  c_1 z_2^{-1} c_2 z_2 \dots z_m^{-1} c_m z_m
$$
where factors $[x_i,y_i]$ are not obligatory.
\end{definition}

\begin{proposition} \label{prop:semistandard-red-bound-nonorientable}
Let $Q \in F_{C\cup X}$ be a non-orientable quadratic word. 
Then there exists an automorphism $\phi \in \Aut_{F_C}(F_{C\cup \Var(Q)})$
such that $Q^\phi$ is conjugate to a semi-standard quadratic word equivalent to ~$Q$ and for any 
$x,y \in \Var(Q)$ and $c \in C$, we have $|x^\phi|_y \le 4$ and $|x^\phi|_c \le 2$.

In particular, $|x^\phi| \le 2|Q|$ for any $x \in \Var(Q)$.
\end{proposition}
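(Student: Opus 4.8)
The plan is to run the proof of Proposition~\ref{prop:standard-red-bound-orientable} verbatim through its Steps~1--3 (none of which uses orientability) and then to replace Step~4 by a two-phase procedure that first collects squares and afterwards collects commutators in the orientable part that is left. So I would first apply Steps~1--3, obtaining a triangular product of elementary automorphisms after which
$$
  Q = c_1 z_2^{-1} c_2 z_2 \dots z_m^{-1} c_m z_m\, R
$$
(or $Q=R$ if $Q$ is coefficient-free), where $R$ is a coefficient-free quadratic word with no locked variables and every edge of $\Gamma_Q$ labelled by a variable of $R$ has both endpoints at the base vertex $\nu_1$. Since $Q$ is non-orientable, $R$ is nonempty and non-orientable.

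Next I would replace Step~4 as follows. \emph{Phase A (collecting squares).} As long as the current coefficient-free part $R$ is non-orientable, pick a variable $x$ occurring in $R$ twice with the same exponent; after a sign change and viewing $R$ as a cyclic word, write $R = A\,x\,B\,x\,C$. Sliding one endpoint of the $x$-edge along the path $B$ --- a sequence of related Nielsen automorphisms that changes only $x$ and realizes $x \mapsto x B^{-1}$ --- turns $R$ into the equivalent word $x^2\,(B^{-1}CA)$, whose tail $B^{-1}CA$ is again a coefficient-free quadratic word. Lock $x$ and repeat on $B^{-1}CA$. Each iteration locks one variable, so the procedure halts; by the orientability criterion it halts precisely when the remaining part $T$ contains no variable with a repeated exponent, i.e.\ $T$ is orientable. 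Since related Nielsen automorphisms keep both endpoints of every variable-edge at $\nu_1$, the star word $T^*$ is still defined. We arrive at $R = x_1^2 \dots x_j^2\, T$ with $j \ge 1$ and $T$ coefficient-free orientable with no locked variables. \emph{Phase B (collecting commutators).} Now apply Step~4 of the proof of Proposition~\ref{prop:standard-red-bound-orientable} to $T$, using only related Nielsen automorphisms among the variables of $T$; this leaves $c_1 z_2^{-1} c_2 z_2\dots z_m^{-1}c_m z_m$ and the prefix $x_1^2\dots x_j^2$ untouched and replaces $T$ by $[u_1,v_1]\dots[u_l,v_l]$. The resulting word is conjugate to a semi-standard quadratic word (Definition~\ref{def:semi-standard}) equivalent to $Q$, and $\phi$ is the product of all the elementary automorphisms used, triangular by the same discipline of locked variables as in Proposition~\ref{prop:standard-red-bound-orientable}.

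For the bounds I would fix $x \in \Var(Q)$ and list the words $W$ appearing in the factors $(x^\epsilon \to W x^\epsilon)$ of $\phi$. Exactly as in the orientable case, any variable $y \ne x$ occurs at most twice in the $(x \to Wx)$-factors and at most twice in the $(x^{-1}\to Wx^{-1})$-factors coming from any single one of Steps~2, 3 and Phase~B, and a variable that participates at Step~2 or~3 does not participate afterwards; in Phase~A the variable $x$ is modified at most once, by $x \mapsto xB^{-1}$ with $B$ a subword of a quadratic word, so $y$ occurs there at most twice and no constant occurs; and a variable is locked the moment its square is collected, hence is never the subject of a Phase~B automorphism. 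Summing these contributions, Lemma~\ref{lm:triangular-bound} gives $|x^\phi|_y \le 4$ and $|x^\phi|_c \le 2$, whence $|x^\phi| \le 2|Q|$.

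I expect the main obstacle to be Phase~A: one has to check that the square-collecting move is correct (that it produces $x^2$ followed by a genuine quadratic remainder and keeps the word equivalent to $Q$), that it is realized by related Nielsen automorphisms of the prescribed shape touching only the variable being squared, that it preserves the property that every variable-edge has both endpoints at $\nu_1$ so that Phase~B may legitimately invoke the orientable Step~4, and that it terminates with an orientable remainder --- together with the book-keeping (routine, but needing care to match the orientable constants) showing that the assembled automorphism is triangular and satisfies $|x^\phi|_y \le 4$ and $|x^\phi|_c \le 2$.
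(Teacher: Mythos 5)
Your overall architecture is the right one and matches the paper's Step~$4^{\rm n}$: reuse Steps 1--3, then collect squares while the coefficient-free part is non-orientable, then collect commutators from the orientable remainder. Phase B and the reuse of Steps 1--3 are fine. But Phase A, as you execute it, does not yield the stated bound on $\phi$ --- it yields a bound on $\phi^{-1}$.

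The problem is the direction of the elementary automorphisms. Your square-collecting move $x\mapsto xB^{-1}$ is a product of elementary automorphisms that \emph{change} $x$ and \emph{touch} every variable of $B$. Those touched variables are exactly the not-yet-collected ones, and they will be changed again in later Phase A iterations (and in Phase B). So the product is not triangular in the sense of Lemma~\ref{lm:triangular-bound}; it is \emph{reverse} triangular (once changed, never touched), which is the discipline used in Proposition~\ref{prop:semistandard-red-bound-inverse} to bound the \emph{inverse} automorphism. Concretely, after the whole process $x_1^\phi = x_1\bigl(B^{\psi_2\cdots\psi_k}\bigr)^{-1}$, and the letters of $B$ are themselves expanded by the subsequent substitutions $\psi_2,\dots,\psi_k$; the resulting nested growth is exactly the phenomenon of Example~\ref{exmpl:inverse-streching}, and nothing in your argument rules it out. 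Your sentence ``in Phase A the variable $x$ is modified at most once \dots so $y$ occurs there at most twice'' bounds the individual factor $W=B$, not the composed value $x^\phi$, and it conflates ``changed'' with ``touched''. The paper's fix is precisely the ingredient you drop: it redefines the star word $R^*$ using a chosen orientation of each edge so that $R^*$ is non-orientable whenever $R$ is, performs the square collection $Z_1xZ_2xZ_3\to Z_1Z_2^{-1}Z_3x^2$ on $R^*$, and transports it back to $R$ via the duality $\rho\mapsto\rho^*$, which interchanges the changed and touched variables. On $R$ the collected variable $x$ is then only \emph{touched} (and can be locked, since it never needs to change again), while the variables of $Z_2,Z_3$ are the ones changed; triangularity is preserved and Lemma~\ref{lm:triangular-bound} gives $|x^\phi|_y\le4$, $|x^\phi|_c\le2$. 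Without this (or some equivalent device) your bound computation is unjustified.
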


\begin{proof}
The steps 1--3 are the same as in the case of orientable $S_Q$. After performing these three steps we get
$$
   Q = c_1 z_2^{-1} c_2 z_2 \dots z_{m}^{-1} c_{m} z_{m} R
$$
where $R$ is now a non-orientable coefficient-free quadratic word. The rest of reduction consists of 
the following step.

{\em Step}\/ $4^{\rm n}$: Collecting squares and commutators.

As in the orientable case, we will work with the star word $R^*$. Unfortunately, the definition of $R^*$ given
in the proof of Proposition \ref{prop:standard-red-bound-orientable} suits only for orientable $R$ since it 
always produces an orientable $R^*$. 
We modify the definition.

For any edge $e$ of $S_Q$, we fix its {\em orientation} which is a choice of the positive direction 
of crossing ~$e$ inside ~$S_Q$.
When passing around the base vertex $\nu_1$, we read the label $x$ of $e$ with exponent $+1$ if we cross $e$ 
in the positive direction
and $-1$ otherwise (so the exponent signs of variables in $R^*$ are not related directly to the 
exponent signs of 
the corresponding occurrences in $R$). It is easy to check that application of a related Nielsen automorphism
$\rho = (x^\ep \mapsto y^\de x^\ep)$ to ~$R^*$ agrees with application of a dual Nielsen 
automorphism ~$\rho^*$ related to ~$R$ that changes $y$ and touches $x$. 

Let $e$ be an edge of $\Ga_Q$ labelled by a variable $x \in \Var(R)$. If $e$ reverses the orientation 
when viewed as a loop in $S_Q$ then we cross $e$ twice in the same direction when passing around ~$\nu_1$.
Hence the both occurrences of $x$ in $R^*$ have the same exponent sign. 
Since there is at least one orientation-reversing $e$, $R^*$ is a non-orientable quadratic word.
Thus, up to the exponent sign of $x$, $R^*$ has the form
$$
  R^* = Z_1 x Z_2 x Z_3.
$$
We then collect the square of $x$:
$$
R^* \xrightarrow{(x \mapsto x Z_3^{-1})}
  Z_1 x Z_3^{-1} Z_2 x 
\xrightarrow{(x \mapsto Z_2^{-1} Z_3 x)}
   Z_1 Z_2^{-1} Z_3 x^2 
$$
If $Z_1 Z_2^{-1} Z_3$ is non-orientable then we repeat the procedure collecting the square
of a variable to the right. Otherwise we apply the procedure of collecting commutators described in 
Step ~4 in the orientable case. Using a triangular sequence of elementary automorphisms we finally reduce ~$Q$ 
to the form
$$
  Q = c_0 z_1^{-1} c_1 z_1 \dots z_{m-1}^{-1} c_{m-1} z_{m-1} x_1^2 \dots x_k^2 
    [x_{k+1},y_{k+1}] \dots [x_r,y_r]
$$
The bound on $\phi$ is obtained in the same way as in the orientable case.
\end{proof}

To reduce a semi-standard non-orientable quadratic word to a standard form we need an extra transformation.

\begin{lemma} \label{lm:extra-autos}
There are automorphisms $\eta_k, \th_k \in \Aut (F_{\set{x_0,\dots,x_k,y_1,\dots,y_k}})$ such that
$$
  x_0^2 [x_1,y_1] \dots [x_k,y_k] \xrightarrow{\eta_k, \th_k} x_0^2 x_1^2 y_1^2 \dots x_k^2 y_k^2
$$
and for any $x \in \set{x_0,\dots,x_k,y_1,\dots,y_k}$, 
$$
  |x^{\eta_k}| \le 4k+1  \quad\text{and}\quad |x^{\th_k^{-1}}| \le 3 k+2.
$$
\end{lemma}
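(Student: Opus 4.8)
The plan is to reduce, one commutator at a time, relying on the classical surface identity that an orientable handle attached to a cross-cap becomes two cross-caps: $x_0^2[x_1,y_1] \sim x_0^2 x_1^2 y_1^2$ (topologically, a Klein bottle summand absorbed by a projective plane). So the core of the argument is the case $k=1$: find explicit $\eta_1,\th_1 \in \Aut(F_{\set{x_0,x_1,y_1}})$ with $x_0^2[x_1,y_1] \xrightarrow{\eta_1,\th_1} x_0^2 x_1^2 y_1^2$ and control of lengths. I would write down a short chain of elementary (Nielsen) automorphisms realizing this; one standard such chain, working on the cyclic word and using moves of the type already catalogued in Section \ref{sec:quadratic-transformations}, sends $x_0 x_0 x_1 y_1 x_1^{-1} y_1^{-1}$ to $x_0 x_0 x_1 x_1 y_1 y_1$ in a bounded number of steps, each step replacing one generator by a product of at most two or three generators. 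Keeping the substitutions triangular (so that Lemma \ref{lm:triangular-bound} applies) lets me read off $|x^{\eta_1}|$ and $|x^{\th_1^{-1}}|$ directly from how many $W_i$'s mention each generator.

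Then I would iterate. Having transformed the prefix $x_0^2[x_1,y_1]$ into $x_0^2 x_1^2 y_1^2$, I rename $y_1$ (or the last square variable) to play the role of the new ``$x_0$'' and apply the $k=1$ move to $y_1^2 [x_2,y_2]$, and so on. More precisely, after $i$ rounds the word reads $x_0^2 x_1^2 y_1^2 \dots x_i^2 y_i^2 [x_{i+1},y_{i+1}]\dots[x_k,y_k]$, and the next round touches only $y_i, x_{i+1}, y_{i+1}$ (the square immediately preceding the next commutator, together with that commutator). Because each round only touches three generators and, once a generator has been ``locked'' into a final square it is never changed again, the whole composite is triangular. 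The linear bounds $4k+1$ and $3k+2$ come from the fact that a fixed generator $x$ can be mentioned in the substitution words of at most the one or two rounds adjacent to it, contributing a bounded amount, except for $x_0$ and the final $y_k$ which can be dragged along more rounds; I expect the precise constants to fall out of a careful bookkeeping of which round mentions which generator, exactly as in the ``Calculation of the bounds'' part of the proof of Proposition \ref{prop:standard-red-bound-orientable}.

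For the inverse automorphisms $\th_k^{-1}$ I would not invert $\th_k$ symbolically (that would risk length blow-up, cf. Example \ref{exmpl:inverse-streching}); instead I would design $\th_k$ from the start as a triangular product whose \emph{inverse} is again a triangular product of elementary automorphisms — equivalently, I build the transformation $x_0^2 x_1^2 y_1^2 \dots \to x_0^2[x_1,y_1]\dots$ directly as a triangular sequence and set $\th_k$ to be its inverse, so that $\th_k^{-1}$ is the thing I actually control. Then Lemma \ref{lm:triangular-bound} again gives the $3k+2$ bound. The main obstacle is purely organizational: choosing the elementary chains for the single-step identity so that both the forward map and the relevant inverse stay triangular and the per-generator occurrence counts are small enough to give the stated linear constants; the topological content (why $x_0^2[x_1,y_1]$ is equivalent to $x_0^2 x_1^2 y_1^2$) is classical and needs only to be made explicit at the level of Nielsen moves.
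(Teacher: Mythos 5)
This is essentially the paper's proof: it defines an explicit three-variable automorphism $\ga$ with $(x^2[y,z])^\ga = x^2y^2z^2$, sets $\eta_k = \ga(x_0,x_1,y_1)\,\ga(y_1,x_2,y_2)\cdots\ga(y_{k-1},x_k,y_k)$ exactly as in your iteration (the last square variable becomes the new cross-cap), and obtains $\th_k$ by the same recipe from a second explicit $\ga$ whose inverse is written down directly, just as you propose. The one discrepancy is your appeal to triangularity: since each $y_i$ is touched in round $i$ and changed again in round $i+1$, the composite is not a triangular product in the sense of Lemma \ref{lm:triangular-bound}, and the paper instead checks the bound $4k+1$ by direct computation (which does work, because each successive round substitutes only into the single occurrence of $y_{i-1}$ and so contributes additively).
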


\begin{proof}
Let $\ga(x,y,z)$ be an automorphism of $F_{\set{x,y,z}}$ such that 
$$
  (x^2 [y,z])^\ga = x^2 y^2 z^2.
$$
Then taking
$$
  \eta_k = \ga(x_0,x_1,y_1) \ga(y_1,x_2,y_2) \dots \ga(y_{k-1},x_k,y_k)
$$
we obviously get 
$$
  x_0^2 [x_1,y_1] \dots [x_k,y_k] \xrightarrow{\eta_k} x_0^2 x_1^2 y_1^2 \dots x_k^2 y_k^2
$$
For a specific $\ga$, we take
$$
  \ga = (x \mapsto x^2 y z x^{-1}, \ y \mapsto xyz x^{-1}, \ z \mapsto xz).
$$
The bound $||\eta_k|| \le 4k+1$ is straightforward. 

To define $\th_k$ we proceed in a similar way by taking for $\ga$ another automorphism 
$$
  \ga = (x \mapsto xyz, \ 
	y \mapsto z^{-1} y^{-1} x^{-1} y z xyz, \
	z \mapsto z^{-1} y^{-1} x^{-1} z)
$$
with
$$
  \ga^{-1} = (x \mapsto x^2 y^{-1} x^{-1}, \ y \mapsto xyx^{-1} z^{-1} x^{-1}, \ z \mapsto xz).
$$

\end{proof}

\begin{proposition} \label{prop:standard-red-bound-nonorientable}
Let $Q \in F_{C\cup X}$ be a non-orientable quadratic word. 
Then there exists an automorphism $\phi \in \Aut_{F_C}(F_{C\cup \Var(Q)})$
such that $Q^\phi$ is conjugate to a standard quadratic word equivalent to ~$Q$ and 
for any $x \in \Var(Q)$ we have $|x^\phi|_X \le 8 n(Q) \,\mathrm{genus}(Q)$ and 
$|x^\phi|_c \le 2$ for any formal coefficient $c \in C$.
\end{proposition}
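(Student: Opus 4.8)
The plan is to compose the semi-standard reduction of Proposition \ref{prop:semistandard-red-bound-nonorientable} with the ``commutator-to-squares'' automorphisms of Lemma \ref{lm:extra-autos}. First I would apply Proposition \ref{prop:semistandard-red-bound-nonorientable} to obtain an $F_C$-automorphism $\psi \in \Aut_{F_C}(F_{C\cup\Var(Q)})$ carrying $Q$ to a conjugate of a semi-standard word $S = c_1 z_2^{-1} c_2 z_2 \cdots z_m^{-1} c_m z_m \, x_1^2 \cdots x_k^2 [x_{k+1},y_{k+1}]\cdots[x_n,y_n]$ (or its coefficient-free analogue) equivalent to $Q$, with $|u^\psi|_y \le 4$ and $|u^\psi|_c \le 2$ for all variables $u,y$ and formal coefficients $c$; since $Q$ is non-orientable, $k \ge 1$. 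Writing $\ell = n-k$ for the number of commutators in $S$, I would then take the automorphism $\eta_\ell$ of Lemma \ref{lm:extra-autos}, with its initial square variable $x_0$ identified with $x_k$ and its commutator variables identified with $x_{k+1},y_{k+1},\dots,x_n,y_n$, and extend it to an $F_C$-automorphism $\eta'$ of $F_{C\cup\Var(Q)}$ by the identity on all remaining generators. Because $\eta'$ fixes $x_1,\dots,x_{k-1}$, the conjugating variables $z_i$, and every formal coefficient, $S^{\eta'}$ is a standard non-orientable quadratic word, equivalent to $S$ (hence to $Q$) since $\eta'$ is an $F_C$-automorphism. Setting $\phi = \psi\eta'$ we get $Q^\phi = (Q^\psi)^{\eta'}$ conjugate to $S^{\eta'}$, which is the required standard form.

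For the length bound I would fix $x \in \Var(Q)$, put $W = x^\psi$, and use $x^\phi = W^{\eta'}$. Since $\eta'$ fixes every formal coefficient and sends each variable to a word involving no coefficients, the number of occurrences of any $c \in C$ in $W^{\eta'}$ equals $|W|_c \le 2$, giving $|x^\phi|_c \le 2$. For the variables, $|W|_X = \sum_y |W|_y \le 4\,n(Q)$, while by Lemma \ref{lm:extra-autos} every variable occurring in $W$ is mapped by $\eta'$ to a word of length at most $4\ell+1$ (all generators outside $\{x_k,\dots,y_n\}$ being fixed). Substituting and freely reducing can only shorten, so $|x^\phi|_X \le 4\,n(Q)\,(4\ell+1)$.

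The one place that needs a genuine argument rather than bookkeeping — and the step I expect to require the most care — is the comparison $4\ell+1 \le 2\,\mathrm{genus}(Q)$. Here I would observe that capping off boundary components of $S_Q$ changes neither orientability nor genus, and that in the presence of at least one cross-cap a handle is equivalent to two cross-caps (this is precisely the $k=1$ case of Lemma \ref{lm:extra-autos}, $x_0^2[x_1,y_1] \to x_0^2 x_1^2 y_1^2$), so that $\mathrm{genus}(Q) = \mathrm{genus}(S) = k + 2\ell$. Since $k \ge 1$, this gives $4\ell + 1 \le 4\ell + 2k = 2\,\mathrm{genus}(Q)$, and therefore $|x^\phi|_X \le 4\,n(Q)(4\ell+1) \le 8\,n(Q)\,\mathrm{genus}(Q)$, as claimed. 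I do not anticipate any deeper obstacle: the substantive work is carried by Proposition \ref{prop:semistandard-red-bound-nonorientable} and Lemma \ref{lm:extra-autos}, and the only subtle points are that $\eta'$ introduces no new coefficient occurrences (preserving $|x^\phi|_c \le 2$) and the genus identity $\mathrm{genus}(Q) = k + 2\ell$ with $k \ge 1$.
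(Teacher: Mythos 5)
Your proposal is correct and follows the paper's own route: compose the semi-standard reduction of Proposition \ref{prop:semistandard-red-bound-nonorientable} with $\eta_\ell$ from Lemma \ref{lm:extra-autos}, bound $|x^\phi|_X$ by $4n(Q)(4\ell+1)$, and conclude via the genus identity (the paper states this as $\ell \le \frac12(\mathrm{genus}(Q)-1)$, which is exactly your $\mathrm{genus}(Q)=k+2\ell$ with $k\ge 1$). Your write-up is in fact more detailed than the paper's very terse proof, and the details check out.
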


\begin{proof}
Proposition \ref{prop:semistandard-red-bound-nonorientable} and Lemma \ref{lm:extra-autos} produce
an automorphism $\phi \in \Aut_{F_C}(F_{C\cup \Var(Q)})$ such that 
$Q^\phi$ is conjugate to a standard quadratic word equivalent to ~$Q$ and 
$$
  |x^\phi|_X \le 4 n(Q) (4k+1) \quad\text{and}\quad  |x^\phi|_c \le 2
$$
where $k$ is the number of commutators in the semi-standard form given by 
Proposition \ref{prop:semistandard-red-bound-nonorientable}.
It remains to notice that
$$
  k \le \frac12 (\text{genus} (Q) - 1).
$$
\end{proof}

We turn now to bounds similar to Propositions \ref{prop:standard-red-bound-orientable}, 
\ref{prop:semistandard-red-bound-nonorientable} and \ref{prop:standard-red-bound-nonorientable}
where we estimate the 
size of the automorphism {\em inverse} to $\phi$. Note that
the sizes of an automorphism $\phi$ of a free group ~$F$ and of its inverse $\phi^{-1}$ can be 
very different. We give an example where the ratio is exponential in the rank of $F$.

\begin{example} \label{exmpl:inverse-streching}
Let $F = F_{\set{x_1, \dots, x_n}}$. Define an automorphism $\phi\in\Aut(F)$ by
\begin{align*}
  x_{2i+1}^\phi &= x_{i+1} x_i x_{i+2} x_{i-1} \dots x_{2i} x_1 x_{2i+1}, \\
  x_{2i}^\phi &= x_i x_{i+1} x_{i-1} x_{i+2} \dots x_1 x_{2i}
\end{align*}
Then $||\phi|| = n$ 
where, by definition, $||\phi|| = \max_i |x_i^\phi|$.
It is not hard to see that $||\phi^{-1}|| = 2^n$.
\end{example}

\begin{proposition} \label{prop:semistandard-red-bound-inverse}
Let $Q \in F_{C \cup X}$ be any quadratic word. 
Then there exists an automorphism $\psi \in \Aut_{F_C}(F_{C\cup \Var(Q)})$
such that $Q^\psi$ is conjugate to a quadratic word $R$ equivalent to $Q$ and 
the following assertions are true:
\begin{enumerate}
\item 
$R$ is standard if $Q$ is orientable and semi-standard if $Q$ is non-orientable.
\item
For any $x,y \in \Var(Q)$ and $c \in C$, we have
$|x^{\psi^{-1}}|_y \le 4$ and $|x^{\psi^{-1}}|_c \le 2$.
In particular, $|x^{\psi^{-1}}| \le 2|Q|$.
\end{enumerate}
\end{proposition}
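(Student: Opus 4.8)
The plan is to re-run the reduction procedure from Propositions \ref{prop:standard-red-bound-orientable} and \ref{prop:semistandard-red-bound-nonorientable}, but to read each elementary transformation \emph{backwards}, so that the composite automorphism we actually build is $\psi^{-1}$ rather than $\psi$. Concretely, at every step of the old argument we produced a triangular product $\phi = \rho_1\rho_2\cdots\rho_k$ of elementary automorphisms carrying $Q$ to the (semi-)standard form $R$. Here I would instead declare $\psi = \phi^{-1}$, i.e. build $\psi$ by running the same chain in reverse, $\psi = \rho_k^{-1}\cdots\rho_1^{-1}$, and then note that $\psi^{-1} = \phi$ is exactly the triangular product already controlled by Lemma \ref{lm:triangular-bound}. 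The point is that Proposition \ref{prop:standard-red-bound-orientable} (resp.\ \ref{prop:semistandard-red-bound-nonorientable}) \emph{already} gives $|x^\phi|_y \le 4$ and $|x^\phi|_c \le 2$; so if we can arrange that the automorphism whose inverse we want is precisely such a $\phi$, we are done. In other words, the proposition is essentially a restatement of the two earlier ones with the direction of the arrow reversed, and the whole content is to verify that reversing the direction is legitimate.

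The subtlety — and what I expect to be the main obstacle — is that the reduction in the earlier propositions is not symmetric in an obvious way: it starts from an \emph{arbitrary} quadratic word $Q$ and ends at the standard form, whereas now we want an automorphism $\psi$ with $Q^\psi$ standard but with control on $\psi^{-1}$, which amounts to an automorphism $\phi = \psi^{-1}$ with $R^\phi$ conjugate to $Q$ where $R$ is standard. So I would argue as follows. First apply Proposition \ref{prop:standard-red-bound-orientable} or \ref{prop:semistandard-red-bound-nonorientable} to $Q$ to obtain the (semi-)standard word $R$ equivalent to $Q$ (we only need its existence and topological type, not a bound here). Then I claim the same style of triangular reduction, run starting from $R$, can be steered to land on a word conjugate to $Q$. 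This is where one must be a little careful: the earlier procedure always drives \emph{toward} the standard form by eliminating superfluous vertices and collecting coefficient factors, commutators and squares; to go the other way one reverses each elementary move. Each individual elementary automorphism used there — coefficient shifts $(x^\ep \mapsto x^\ep d)$, related Nielsen automorphisms $(x^\ep \mapsto x^\ep y^\de)$, introductions of new variables, and the commutator/square-collecting moves — has an inverse which is again an elementary automorphism of exactly the same shape (a coefficient shift by $d^{-1}$, a related Nielsen move with the opposite letter, a deletion of a variable, etc.), so the reversed chain $\rho_k^{-1}\cdots\rho_1^{-1}$ is again a product of elementary automorphisms. The key bookkeeping claim is that this reversed chain, viewed as $\psi$, has $\psi^{-1}$ equal to the original triangular product $\phi$, and hence inherits verbatim the bounds $|x^{\psi^{-1}}|_y \le 4$, $|x^{\psi^{-1}}|_c \le 2$ from Lemma \ref{lm:triangular-bound}.

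So the concrete steps are: (1) run Steps 1--3 of the proof of Proposition \ref{prop:standard-red-bound-orientable} exactly as before — these are symmetric enough that nothing changes, and they contribute the coefficient part with each formal coefficient touched at most twice; (2) for Step 4 (collecting commutators) in the orientable case, or Step $4^{\mathrm n}$ (collecting squares and commutators) in the non-orientable case, observe that each of the four-automorphism blocks written out there is a triangular product touching only the two variables $x_i, y_i$, and that its inverse is again such a block; (3) assemble $\phi$ as the triangular product of all these blocks over all indices $i$, exactly as in the earlier proofs, so that Lemma \ref{lm:triangular-bound} gives $|x^\phi|_y \le 4$ and $|x^\phi|_c \le 2$; (4) set $\psi = \phi^{-1}$ and read off that $Q^{\psi^{-1}\cdot\psi}$-type identities force $Q^\psi$ to be conjugate to the standard/semi-standard target, while $\psi^{-1} = \phi$ carries the desired bounds, whence $|x^{\psi^{-1}}| \le |x^\phi|_X + |x^\phi|_C \le 4\,n(Q) \cdot\text{(bounded)}$, and in particular $|x^{\psi^{-1}}| \le 2|Q|$. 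The only genuine verification needed beyond the earlier propositions is the ``inverse of an elementary move is elementary of the same type and touches/changes the same variables'' remark, which is routine but should be stated explicitly because it is exactly what makes the triangularity survive inversion.
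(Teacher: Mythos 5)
There is a genuine gap at the heart of the proposal: setting $\psi=\phi^{-1}$, where $\phi$ is the automorphism of Proposition \ref{prop:standard-red-bound-orientable} or \ref{prop:semistandard-red-bound-nonorientable}, does \emph{not} make $Q^\psi$ conjugate to a standard word. Those propositions give $Q^\phi$ conjugate to $R$; it follows that $R^{\phi^{-1}}$ is conjugate to $Q$, but $Q^{\phi^{-1}}$ is an unrelated (generically long and non-standard) quadratic word. Your step (4), which claims that ``$Q^{\psi^{-1}\cdot\psi}$-type identities force $Q^\psi$ to be conjugate to the standard target,'' is exactly where this breaks: $Q^{\psi^{-1}\psi}=Q$ says nothing about $Q^\psi$. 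You do, in the middle of the proposal, correctly identify what is actually needed --- a triangular product $\phi$ with $R^\phi$ conjugate to $Q$ for $R$ standard, equivalently an automorphism $\psi$ carrying $Q$ to $R$ whose \emph{inverse} is triangular --- but the assertion that the old reduction ``can be steered to land on a word conjugate to $Q$'' is left entirely unconstructed, and the earlier procedure gives no mechanism for targeting a prescribed non-standard word: it is a normalization process that only drives toward the standard form.

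The missing idea is the one the paper uses: build a \emph{new} chain of elementary automorphisms on $Q$ itself, still ending at a (semi-)standard form, but organized to be \emph{reverse triangular} --- once a variable is \emph{changed} it is never \emph{touched} again (rather than the other way around). The inverse of such a product is triangular, so Lemma \ref{lm:triangular-bound} applies to $\psi^{-1}$ directly. Achieving reverse triangularity forces a reorganization of the reduction that your proposal does not address: one must work on $Q$ directly (no star word), first collect squares and commutators to the left, locking each $x_i^2$ or $[x_i,y_i]$ as soon as it is formed, and only then sweep the coefficient syllables $z^{-1}cz$ to the right using substitutions of the form $(z\mapsto zT^{-1})$. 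Simply observing that the inverse of each elementary move is elementary of the same type does not rescue the argument, because the object you need $Q^\psi$ to equal and the object you need bounds on belong to two different automorphisms unless the chain is built with this reversed locking discipline.
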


\begin{proof}
Similarly to the arguments used in the proof of Propositions \ref{prop:standard-red-bound-orientable}
and \ref{prop:semistandard-red-bound-nonorientable},
we view $Q$ as a cyclic word and construct $\psi$ 
so that the inverse automorphism $\psi^{-1}$ will be a triangular product
of elementary automorphisms.
The condition that $\psi^{-1}$ is triangular is equivalent to the condition that $\psi$
is {\em reverse triangular} in the following sense: if a variable $x$ is changed by some 
elementary automorphism in
the product then $x$ is not touched by subsequent elementary automorphisms. 
If fact, the proof will be simpler than the proof of 
Propositions ~\ref{prop:standard-red-bound-orientable} and ~\ref{prop:semistandard-red-bound-nonorientable}
because there is no need to pass to the star word and we will operate on the quadratic word ~$Q$ itself. 

According to the change in the notion of a triangular product, we change the notion of a locked variable: 
a variable $x$ is viewed as locked at a current transformation step if it was previously 
changed by an elementary automorphism.
Subsequent elementary automorphisms should not touch locked variables. 
Similar to the proof of Propositions ~\ref{prop:standard-red-bound-orientable} and
\ref{prop:semistandard-red-bound-nonorientable}, 
we collect locked variables to a fixed part of $Q$. Thus, at any moment
$Q$ has the form $LR$ where $L$ is the locked part of $Q$ and $R$ has no locked variables. 
At the start, $L$ is empty
and since $Q$ is viewed a cyclic word,  
we assume without loss of generality that $R$ starts with a coefficient letter.

We proceed in two steps.

{\em Step\/} $1^{\rm i}$: Reducing the coefficient-free part.

If $R$ is non-orientable then $R = Z_1 x Z_2 x Z_3$ for some variable $x$. Using a sequence 
of elementary automorphisms similar
to one given in Step $4^{\rm n}$ in the proof of Proposition \ref{prop:semistandard-red-bound-nonorientable}
we collect $x^2$ to the left of $R$ and add it to the locked part.

If $R$ is orientable and there are variables $x$ and $y$ which ``cross'' in $R$, that is,
$R = Z_1 x^{-1} Z_2 y^{-1} Z_3 x Z_4 y Z_5$ (up to exponent signs of $x$ and $y$) then we collect the
commutator $[x,y]$ to the left of $R$ as in Step 4 in the proof of 
Proposition \ref{prop:standard-red-bound-orientable}.

Iterating the procedure, we reduce $Q$ to the form
$$
  Q = x_1^2 \dots x_k^2 [x_{k+1},y_{k+1}] \dots [x_r,y_r] R
$$
where $R$ is orientable and has no crossing variables.

{\em Step\/} $2^{\rm i}$: Collecting the coefficient part. 

If $R$ has no variables then we are done. Let $\Var(R)\ne\eset$.
By the assumption that $R$ has no crossing variables, there is a variable $z$ with no variables
between the two occurrences of $z$ in ~$R$. Then we have $R = T_1 z^{-1} c z T_2$ where $c \in F_A$ is a standard
coefficient of $Q$. Applying automorphism 
$(z \mapsto z T_2^{-1})$ to $Q$ we shift $z^{-1} c z$ to the right of $R$. Iterating the
procedure we get
$$
  Q = x_1^2 \dots x_k^2 [x_{k+1},y_{k+1}] \dots [x_r,y_r] c_1 z_2^{-1} c_2 z_2 \dots z_m^{-1} c_m z_m
$$
It remains to observe that at any transformation step, 
we preserve the property that $R$ starts with a coefficient letter and so $c_1\ne 1$.

The required bounds $|x^{\psi^{-1}}|_y \le 4$ and $|x^{\psi^{-1}}|_c \le 2$ are straighforward 
in view of Lemma \ref{lm:triangular-bound} (applied to $\psi^{-1}$).
\end{proof}

From Proposition \ref{prop:semistandard-red-bound-inverse} and Lemma \ref{lm:semistandard-transition}
we get a dual version of Proposition ~\ref{prop:standard-red-bound-nonorientable}.

\begin{proposition} \label{prop:standard-red-bound-nonorientable-inverse}
Let $Q \in F_{C \cup X}$ be a non-orientable quadratic word. 
Then there exists an automorphism $\psi \in \Aut_{F_C}(F_{C\cup \Var(Q)})$
such that $Q^\psi$ is conjugate to a standard quadratic word equivalent to $Q$ and 
for any $x,y \in \Var(Q)$ and $c \in C$, we have $|x^{\psi^{-1}}|_y \le 8 \,\mathrm{genus}(Q)$ and 
$|x^{\psi^{-1}}|_c \le 4 \,\mathrm{genus}(Q)$. 
\end{proposition}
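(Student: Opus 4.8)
The plan is to factor the desired $\psi$ as a composition $\psi=\psi_1\psi_2$ of two automorphisms whose \emph{inverses} are separately controlled, so that $\psi^{-1}=\psi_2^{-1}\psi_1^{-1}$. First I would apply Proposition~\ref{prop:semistandard-red-bound-inverse} to $Q$: since $Q$ is non-orientable this produces $\psi_1\in\Aut_{F_C}(F_{C\cup\Var(Q)})$ with $Q^{\psi_1}$ conjugate to a semi-standard quadratic word
$$
  R = x_1^2 \dots x_l^2\,[x_{l+1},y_{l+1}]\dots[x_{l+k},y_{l+k}]\,c_1 z_2^{-1}c_2 z_2 \dots z_m^{-1}c_m z_m
$$
equivalent to $Q$, where $l\ge 1$ since non-orientability forces at least one square, and with $|x^{\psi_1^{-1}}|_y\le 4$ and $|x^{\psi_1^{-1}}|_c\le 2$ for all $x,y\in\Var(Q)$, $c\in C$ (a possibly empty coefficient part is allowed). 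Next I would feed the block $x_l^2[x_{l+1},y_{l+1}]\dots[x_{l+k},y_{l+k}]$ to Lemma~\ref{lm:extra-autos}, with $x_l$ playing the role of $x_0$; extending the resulting $\th_k$ by the identity on the remaining generators of $F_{C\cup\Var(Q)}$ gives $\psi_2\in\Aut_{F_C}(F_{C\cup\Var(Q)})$ such that $R^{\psi_2}$ is a standard quadratic word, still equivalent to $Q$ and of genus $l+2k$, with $|x^{\psi_2^{-1}}|\le 3k+2$ for every variable $x$. Note that $\psi_2$ is an automorphism of a coefficient-free free group, so it creates no coefficient letters. Since $\psi_2$ fixes $F_C$ and conjugacy is preserved under automorphisms, $Q^{\psi_1\psi_2}$ is conjugate to $R^{\psi_2}$, which settles the structural part of the statement.

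For the length estimates I would fix a variable $x$ and write $x^{\psi^{-1}}=(x^{\psi_2^{-1}})^{\psi_1^{-1}}$. The word $x^{\psi_2^{-1}}$ involves only variable letters and has length at most $3k+2$; substituting each of its letters $y$ by $y^{\psi_1^{-1}}$ (free reduction thereafter only decreasing occurrence counts) yields, for any variable $z$ and any $c\in C$,
$$
  |x^{\psi^{-1}}|_z \le \sum_{y\in x^{\psi_2^{-1}}} |y^{\psi_1^{-1}}|_z \le 4(3k+2), \qquad
  |x^{\psi^{-1}}|_c \le \sum_{y\in x^{\psi_2^{-1}}} |y^{\psi_1^{-1}}|_c \le 2(3k+2),
$$
the sums running over the at most $3k+2$ letter occurrences of $x^{\psi_2^{-1}}$. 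Finally, the non-orientable genus of $Q$ equals $l+2k\ge 2k+1$, so $k\le\frac12(\mathrm{genus}(Q)-1)$, whence $4(3k+2)\le 6\,\mathrm{genus}(Q)+2\le 8\,\mathrm{genus}(Q)$ and $2(3k+2)\le 3\,\mathrm{genus}(Q)+1\le 4\,\mathrm{genus}(Q)$, using $\mathrm{genus}(Q)\ge 1$ in the last inequalities.

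The argument is almost pure bookkeeping; the only genuinely load-bearing decision is the \emph{order} of the composition. Writing $\psi=\psi_1\psi_2$ makes $\psi^{-1}=\psi_2^{-1}\psi_1^{-1}$ the substitution of the already-small words $y^{\psi_1^{-1}}$ into the \emph{short} word $x^{\psi_2^{-1}}$, whose length is $O(\mathrm{genus}(Q))$ independently of $n(Q)$, and this is exactly what keeps the final bound linear in the genus; composing in the opposite order would substitute $x^{\psi_2^{-1}}$-length words into $x^{\psi_1^{-1}}$ and reintroduce a factor of $n(Q)$, which is precisely the loss that appears in Proposition~\ref{prop:standard-red-bound-nonorientable}. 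The remaining points to check are routine: that $\Var(R)\subseteq\Var(Q)$, so $\psi_2$ really extends over $F_{C\cup\Var(Q)}$, and that $R^{\psi_2}$ lies in the same equivalence class as $Q$; both are immediate.
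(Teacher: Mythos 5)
Your proof is correct, and the skeleton (reduce to semi-standard form by Proposition~\ref{prop:semistandard-red-bound-inverse}, then convert the commutator block to squares by a second automorphism whose inverse is controlled, and compose so that $\psi^{-1}=\psi_2^{-1}\psi_1^{-1}$ substitutes the short words $y^{\psi_1^{-1}}$ into the $O(\mathrm{genus})$-length word $x^{\psi_2^{-1}}$) is exactly the right dualization of the paper's proof of Proposition~\ref{prop:standard-red-bound-nonorientable}. The arithmetic checks out: $l\ge 1$ because a semi-standard non-orientable word must contain a square, so $k\le\frac12(\mathrm{genus}(Q)-1)$, and $4(3k+2)\le 8\,\mathrm{genus}(Q)$, $2(3k+2)\le 4\,\mathrm{genus}(Q)$ for $\mathrm{genus}(Q)\ge 1$.

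The one substantive difference from the paper is the choice of the second ingredient. The paper's (one-line) justification cites Lemma~\ref{lm:semistandard-transition}, whereas you use the automorphism $\th_k$ of Lemma~\ref{lm:extra-autos}, which is the one the paper equips with the inverse bound $|x^{\th_k^{-1}}|\le 3k+2$ and otherwise never uses. Your choice is arguably the better one: Lemma~\ref{lm:semistandard-transition} bounds $\psi$ rather than $\psi^{-1}$, so to use it here one would apply it from the standard form to the semi-standard form and invert; its bounds $|y^\psi|_Y\le 4g$, $|y^\psi|_Z\le 4m$ then give, after composing with $\psi_1^{-1}$, something like $16(g+m)$ occurrences of a fixed variable rather than $8g$, so the stated constants do not fall out of that route without extra care. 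Your route via $\th_k$ delivers the constants cleanly, and the only points you leave implicit --- that $\th_k$ extends by the identity to $F_{C\cup\Var(Q)}$ because $\Var(R)\subseteq\Var(Q)$, that it commutes past the untouched squares and the coefficient part, and that occurrence counts only decrease under free reduction after substitution --- are all immediate.
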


Passing from quadratic words with formal coefficients to quadratic words $Q \in G * F_X$ 
over a group $G$ we can easily formulate a general result for an {\em arbitrary} coefficient group $G$. 

\begin{corollary} \label{cor:standard-reduction-general} 
Let $Q \in G * F_X$ be quadratic word over an arbitrary group $G$. Then
there are automorphisms $\phi,\psi \in \Aut_G(G * F_X)$
such that $Q^\phi$ and $Q^\psi$ are conjugate to standard quadratic words equivalent to $Q$ 
and for any variable $x$,
$$
  |x^\phi| , |x^{\psi^{-1}}| \le
    \begin{cases}
    4 n(Q) + 2 c(Q) & \quad \text{if $Q$ is orientable} \\
    8 n^2 (Q) + 4 n(Q) c(Q)  & \quad \text{if $Q$ is non-orientable}
    \end{cases}
$$
where $c(Q)$ is the total length of coefficients of $Q$ expressed in any left-invariant 
(e.g.\ word) metric on $G$.
\end{corollary}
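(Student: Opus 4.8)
The plan is to deduce the corollary from its formal-coefficient counterparts. For the automorphism $\phi$ we will use Proposition \ref{prop:standard-red-bound-orientable} (orientable case) or Proposition \ref{prop:standard-red-bound-nonorientable} (non-orientable case); for $\psi$ we will use Proposition \ref{prop:semistandard-red-bound-inverse} (orientable) or Proposition \ref{prop:standard-red-bound-nonorientable-inverse} (non-orientable). The work is then to transport the automorphisms produced there from $F_{C\cup\Var(Q)}$ down to $G * F_X$ along the coefficient map and to carry out the length bookkeeping. First I would put $Q$ into formal-coefficient form: write the reduced word $Q$ as an alternating product of variable occurrences and maximal syllables $g_1,\dots,g_s\in G$, introduce an alphabet $C=\{c_1,\dots,c_s\}$, let $\widehat Q\in F_{C\cup\Var(Q)}$ be obtained from $Q$ by replacing each $g_j$ with $c_j$, and let $\gamma\colon F_C\to G$ send $c_j\mapsto g_j$. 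Then each letter of $C$ occurs at most once in $\widehat Q$; the word $\widehat Q$ is orientable (resp.\ non-orientable) exactly when $Q$ is, with the same genus; and, by the triangle inequality for the left-invariant metric on $G$, $\sum_{c\in C}|c^\gamma|\le c(Q)$.

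Next I would record the transport mechanism. Let $\kappa\colon F_{C\cup\Var(Q)}\to G * F_{\Var(Q)}$ be the homomorphism induced by $\gamma$ together with the identity on $\Var(Q)$, and for $\widehat\alpha\in\End_{F_C}(F_{C\cup\Var(Q)})$ let $\pi(\widehat\alpha)$ be the $G$-endomorphism of $G * F_{\Var(Q)}$ sending $x\mapsto\kappa(x^{\widehat\alpha})$ for $x\in\Var(Q)$. Checking on generators that $\kappa(w^{\widehat\alpha})=\kappa(w)^{\pi(\widehat\alpha)}$ for all $w$ (both sides are homomorphic in $w$ and agree on letters of $C$, which are fixed, and on variables, by definition of $\pi$) shows that $\pi$ is a monoid homomorphism with $\pi(\mathrm{id})=\mathrm{id}$; hence $\pi$ maps $\Aut_{F_C}(F_{C\cup\Var(Q)})$ into $\Aut_G(G * F_{\Var(Q)})$ and $\pi(\widehat\alpha^{-1})=\pi(\widehat\alpha)^{-1}$. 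Since $\kappa(\widehat Q)=Q$ and $\kappa$ carries conjugacy to conjugacy and the equivalence of Definition \ref{def:quadratic-equivalent} to equivalence over $G$ (the standard coefficients are simply pushed through $\gamma$), whenever $\widehat Q^{\widehat\alpha}$ is conjugate to a standard word equivalent to $\widehat Q$, the word $Q^{\pi(\widehat\alpha)}=\kappa(\widehat Q^{\widehat\alpha})$ is conjugate to a standard quadratic word over $G$ equivalent to $Q$.

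Finally I would assemble the bound. Apply Proposition \ref{prop:standard-red-bound-orientable} or \ref{prop:standard-red-bound-nonorientable} to $\widehat Q$ to get $\widehat\phi$, and Proposition \ref{prop:semistandard-red-bound-inverse} or \ref{prop:standard-red-bound-nonorientable-inverse} to get $\widehat\psi$; set $\phi=\pi(\widehat\phi)$ and $\psi=\pi(\widehat\psi)$, each extended by the identity on $X\sm\Var(Q)$ to land in $\Aut_G(G * F_X)$. From the syllable decomposition of $w\in F_{C\cup\Var(Q)}$ and $|g^{-1}|=|g|$ one gets $|\kappa(w)|\le |w|_X+\sum_{c\in C}|w|_c\,|c^\gamma|$, reduction in $G * F_{\Var(Q)}$ only shortening the word. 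In the orientable case $|x^{\widehat\phi}|_y\le 4$ and $|x^{\widehat\psi^{-1}}|_y\le 4$ for every $y\in\Var(Q)$ and the corresponding $|\cdot|_c$ are $\le 2$, giving $|x^\phi|,|x^{\psi^{-1}}|\le 4n(Q)+2c(Q)$. In the non-orientable case the relevant bounds are $|x^{\widehat\phi}|_X\le 8n(Q)\,\mathrm{genus}(Q)$, $|x^{\widehat\phi}|_c\le 2$, $|x^{\widehat\psi^{-1}}|_y\le 8\,\mathrm{genus}(Q)$, $|x^{\widehat\psi^{-1}}|_c\le 4\,\mathrm{genus}(Q)$; using $\mathrm{genus}(Q)\le n(Q)$, $\sum_c|c^\gamma|\le c(Q)$, and absorbing $2c(Q)$ into $4n(Q)c(Q)$ when $\Var(Q)\ne\eset$ (the statement being trivial otherwise), both quantities are $\le 8n^2(Q)+4n(Q)c(Q)$.

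The one step that needs genuine care is the transport identity $\pi(\widehat\alpha^{-1})=\pi(\widehat\alpha)^{-1}$: this is precisely what converts the formal bound on $\widehat\psi^{-1}$ into a bound on $\psi^{-1}$, which cannot be extracted from any bound on $\psi$ itself, as Example \ref{exmpl:inverse-streching} shows. Everything else is bookkeeping---verifying that $\kappa$ respects conjugacy and the equivalence relation, and that the length estimate only improves under the extra cancellation that may occur inside $G$.
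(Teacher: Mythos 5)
Your proposal is correct and follows essentially the same route as the paper: the corollary is deduced from Propositions \ref{prop:standard-red-bound-orientable} and \ref{prop:semistandard-red-bound-inverse} in the orientable case and from Propositions \ref{prop:standard-red-bound-nonorientable} and \ref{prop:standard-red-bound-nonorientable-inverse} together with $\mathrm{genus}(Q)\le n(Q)$ in the non-orientable case. The paper leaves the passage from formal coefficients to a general group $G$ and the length bookkeeping implicit, whereas you spell them out (including the key point that the transport map is a homomorphism, so the bound on $\widehat\psi^{-1}$ really does yield a bound on $\psi^{-1}$), and your accounting is accurate.
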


\begin{proof}
For orientable $Q$ this follows from Propositions \ref{prop:standard-red-bound-orientable} and
\ref{prop:semistandard-red-bound-inverse}. If $Q$ is non-orientable then we have to use
Propositions \ref{prop:standard-red-bound-nonorientable} and \ref{prop:standard-red-bound-nonorientable-inverse}
and inequality $\text{genus}(Q) \le n(Q)$.
\end{proof}

In the end of the section, we formulate several lemmas for later use. The first one was in fact proved 
in the proof of Proposition \ref{prop:semistandard-red-bound-inverse} 
(unlike the proposition, we do not make a passage to a conjugate element here).

\begin{lemma} \label{lm:cffree-reduction}
\begin{enumerate}
\item 
Let $Q \in F_X$ be a coefficient-free orientable quadratic word. 
Then there is an automorphism $\psi\in \Aut(F_X)$ such that 
$$
  Q^\psi = [x_1, y_1] \dots [x_g, y_g]
$$
and $|x_i^{\psi^{-1}}|_x, |y_i^{\psi^{-1}}|_x \le 4$ for all $i$ and $x \in \Var(Q)$. 
\item
Let $Q \in F_X$ be a coefficient-free non-orientable quadratic word. 
Then there is an automorphism $\psi\in \Aut(F_X)$ such that 
$$
  Q^\psi = x_1^2 \dots x_k^2 [x_{k+1}, y_{k+1}] \dots [x_{k+n}, y_{k+n}]
$$
and $|x_i^{\psi^{-1}}|_x, |y_i^{\psi^{-1}}|_x \le 4$ for all $i$ and $x \in \Var(Q)$. 
\end{enumerate}
\end{lemma}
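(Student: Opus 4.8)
The plan is to reread the proof of Proposition~\ref{prop:semistandard-red-bound-inverse} in the special case that $Q$ is coefficient-free, and to notice that there it produces the standard (resp.\ semi-standard) word \emph{on the nose}, not merely up to conjugacy. Recall that in that proof one builds $\psi$ as a product of elementary automorphisms whose inverse $\psi^{-1}$ is triangular, maintaining a factorization $Q=LR$ of the current word into a locked prefix $L$ and an unlocked remainder $R$, and successively moving newly collected squares and commutators from the left end of $R$ into~$L$. When $Q$ has no coefficients, Step~$2^{\mathrm{i}}$ (collecting the coefficient part) is vacuous and the initial cyclic rotation that makes $R$ start with a coefficient letter is unnecessary: one starts with $L$ empty and $R=Q$. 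Since every transformation used is an honest elementary automorphism $(u^{\ep}\mapsto Wu^{\ep})$ applied to the genuine word rather than to its cyclic class, the equality $Q^{\rho_1\cdots\rho_i}=L_iR_i$ holds literally throughout, so no conjugation is ever incurred.

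Concretely I would run the collecting loop of Step~$1^{\mathrm{i}}$. While the unlocked remainder $R$ is non-orientable, choose a variable $x$ occurring twice with the same exponent, write $R=Z_1xZ_2xZ_3$, apply the two elementary automorphisms which carry $R$ to $x^2\,Z_1Z_2^{-1}Z_3$ (the analogue of the automorphisms of Step~$4^{\mathrm{n}}$, arranged to deposit $x^2$ at the left), and then lock $x$. Once $R$ is orientable, while it still contains a crossing pair $x,y$, collect a commutator $[x,y]$ at the left end of $R$ as in Step~4 of Proposition~\ref{prop:standard-red-bound-orientable} (operating directly on $R$, with no passage to a star word), and lock $x,y$. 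The loop stops when $R$ is an orientable coefficient-free quadratic word with no crossing pair of variables; the occurrence pairing of such an $R$ is non-crossing, so if $R$ were nonempty it would have an innermost pair, i.e.\ it would contain a subword $z^{\ep}z^{-\ep}$ (the two occurrences of some $z$ with nothing between them, and opposite exponents by orientability), contradicting that $R$ is reduced. Hence $R$ is empty and $Q^\psi=L=x_1^2\cdots x_k^2\,[x_{k+1},y_{k+1}]\cdots[x_{k+n},y_{k+n}]$; if $Q$ is orientable no square is ever collected, so $k=0$ and we get part~(i), while for non-orientable $Q$ we get part~(ii). A final permutation of the variables together with a change of their exponent signs, which does not affect any occurrence count, puts $Q^\psi$ in the displayed form.

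For the bounds, $\psi^{-1}$ is by construction a triangular product of elementary automorphisms, so Lemma~\ref{lm:triangular-bound} applies to it. The estimates $|x_i^{\psi^{-1}}|_x\le 4$ and $|y_i^{\psi^{-1}}|_x\le 4$, for every $i$ and every $x\in\Var(Q)$, then follow word for word as in the proof of Proposition~\ref{prop:semistandard-red-bound-inverse}: for a fixed target variable $u$, every elementary factor of $\psi^{-1}$ that changes $u$ occurs at the single step at which $u$ is locked, and at that step a given variable appears at most twice among the words $W$ of the ``$\mapsto Wu$'' factors and at most twice among those of the ``$\mapsto Wu^{-1}$'' factors, while a variable already moved into $L$ never participates again. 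I expect the only delicate point to be verifying that the collecting automorphisms, applied to the literal word, indeed deposit each new square or commutator at the left end of the current $R$, so that the accumulated prefix $L$ is standard as a word and not merely as a cyclic word; this is exactly where the absence of coefficients is used, since it is what lets us dispense with the cyclic rotation present in Proposition~\ref{prop:semistandard-red-bound-inverse}. The remainder is a routine transcription of that proof.
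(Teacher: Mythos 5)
Your proof is correct and takes essentially the same route as the paper: the authors justify this lemma simply by pointing back to the proof of Proposition~\ref{prop:semistandard-red-bound-inverse} and remarking that, unlike in that proposition, no passage to a conjugate element is needed, which is precisely the specialization you carry out. The details you supply (Step $2^{\rm i}$ being vacuous, the emptiness of the final orientable non-crossing remainder, and the count of at most four occurrences per locked variable via Lemma~\ref{lm:triangular-bound}) match the intended argument.
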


\begin{lemma} \label{lm:permut-coefficients}
Let $m\ge 1$ and $\si$ be a permutation on $\set{1,\dots,m}$. Then there is an automorphism $\psi$ of
$F_{\set{c_1,\dots,c_{m},z_1,\dots,z_{m}}}$ such that $c_i^\psi = c_i$ for all $i$, with the 
following properties:
\begin{enumerate}
\item 
$$
  z_1^{-1} c_1 z_1 z_2^{-1} c_2 z_2 \dots z_{m}^{-1} c_{m} z_{m} \xrightarrow{\psi}
    z_{\si(1)}^{-1} c_{\si(1)} z_{\si(1)} z_{\si(2)}^{-1} c_{\si(2)} z_{\si(2)} \dots 
      z_{\si(m)}^{-1} c_{\si(m)} z_{\si(m)}.
$$
\item
For any $i$, the image $z_i^\psi$ of $z_i$ has the form
$$
  z_i^\psi = z_i z_{\si(j_1)}^{-1} c_{\si(j_1)} z_{\si(j_1)} \dots 
    z_{\si(j_k)}^{-1} c_{\si(j_k)} z_{\si(j_k)}
$$
for some increasing sequence of indices $1 \le j_1 <\dots< j_k \le m$.
\end{enumerate}
\end{lemma}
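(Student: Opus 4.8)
The plan is to realize $\si$ by an ``insertion sort'' on the blocks $B_i := z_i^{-1}c_iz_i$: the source word is $W = B_1B_2\cdots B_m$ and the target is $W_\si := B_{\si(1)}B_{\si(2)}\cdots B_{\si(m)}$. We look for $\psi$ fixing every $c_j$ and sending each $z_i$ to $z_i$ times a word not containing $z_i$; then $B_i^\psi$ is that word's conjugate of $B_i$, so the task is to pick these conjugating words so that $B_1^\psi B_2^\psi\cdots B_m^\psi$ freely reduces to $W_\si$ while each one has the shape demanded in (ii).

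First I would isolate the elementary move. To turn an adjacent pair $PQ$ of blocks into $QP$, apply the automorphism $(z \mapsto zP)$, where $z$ is the variable carrying $Q$: it sends $Q \mapsto P^{-1}QP$ and fixes $P$, so $PQ \mapsto P\cdot P^{-1}QP = QP$, and — the point — the word appended to $z$ is the \emph{positive} left block $P$. Iterating, sliding a block leftwards past a run $D$ of blocks immediately to its left is effected by $(z \mapsto zD)$. Now set $p_i := \si^{-1}(i)$ (the position $B_i$ must occupy in $W_\si$), let $U_i$ be the word listing $B_1,\dots,B_i$ in increasing order of target position, and let $D_i$ be the sub-run of $U_{i-1}$ consisting of those blocks whose target position exceeds $p_i$, in that same order. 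Define $\psi_i := (z_i \mapsto z_iD_i)$ and $\psi := \psi_1\psi_2\cdots\psi_m$.

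The verification splits into three parts. (1) For $W^\psi = W_\si$, I would induct on $i$ to show $W^{\psi_1\cdots\psi_i} = U_i\,B_{i+1}B_{i+2}\cdots B_m$: in the step $\psi_i$ leaves $U_{i-1}$ and $B_{i+1},\dots,B_m$ untouched (they involve $z_1,\dots,z_{i-1}$ and $z_{i+1},\dots,z_m$ respectively, whereas $D_i$ involves only $z_1,\dots,z_{i-1}$) and sends $B_i$ to $D_i^{-1}B_iD_i$; writing $U_{i-1}=A_iD_i$ with $A_i$ the complementary (smaller target position) sub-run, one gets $U_{i-1}\cdot D_i^{-1}B_iD_i = A_iB_iD_i = U_i$. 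At $i=m$ this reads $W^\psi = U_m = W_\si$. (2) $\psi$ is an automorphism: each $D_i$ lies in the subgroup generated by $z_1,\dots,z_{i-1}$ and the $c_j$, so listing the generators as $z_1,z_2,\dots,z_m$ (the $c_j$ fixed) $\psi$ carries each $z_i$ to itself times a word in strictly earlier generators — hence it is invertible — and it fixes each $c_j$ by construction. (3) The shape: by definition $z_i^\psi = z_iD_i$, and $D_i = B_{\si(j_1)}B_{\si(j_2)}\cdots B_{\si(j_k)}$ where $j_1<j_2<\dots<j_k$ are the target positions occurring in $D_i$, which is precisely the form asserted in (ii).

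The one step that needs real care — and the main obstacle — is making the right choices in the sweep. Using the ``positive'' swap $(z\mapsto zP)$ rather than the equally valid $(z\mapsto zQ^{-1})$ is what keeps the conjugating words products of \emph{positive} conjugates $z^{-1}cz$; and inserting the blocks in their \emph{index} order into the growing target-ordered segment (insertion sort), rather than repeatedly dragging the next target block to the front (selection sort), is what makes those words come out in \emph{increasing} target-position order. With these choices the structural fact that $D_i$ involves only $z_1,\dots,z_{i-1}$ — so it neither disturbs, nor is disturbed by, the other $\psi_j$ — is exactly what collapses the composite $\psi_1\cdots\psi_m$ to the closed form above and makes the cancellation in (1) and the shape in (3) immediate.
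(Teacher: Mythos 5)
Your proof is correct and uses the same key device as the paper: the elementary automorphism $(z_j \mapsto z_j\, z_i^{-1} c_i z_i)$ swapping adjacent blocks, composed into a triangular product of such moves that realizes $\si$ while keeping each $z_i^\psi$ in the form required by (ii). The paper arranges the blocks by a slightly different sweep (pushing $B_{\si(m)}, B_{\si(m-1)},\dots$ to the right rather than your insertion sort), but this is only a bookkeeping variant; your explicit closed form $z_i^\psi = z_i D_i$ and the inductive verification are a welcome filling-in of details the paper leaves terse.
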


\begin{proof}
Using the automorphism
$$
  z_i^{-1} c_i z_i z_j^{-1} c_j z_j \xrightarrow{(z_j \mapsto z_j z_i^{-1} c_i z_i)} 
  z_j^{-1} c_j z_j z_i^{-1} c_i z_i
$$
we can permute two neighboring factors of the form $z_i^{-1} c_i z_i$.
Starting with $z_{\si(m-)}^{-1} c_{\si(m)} z_{\si(m)}$ we arrange factors $z_i^{-1} c_i z_i$
to the right in the order as they should occur in the desired image of 
$z_1^{-1} c_1 z_1 \dots z_{m}^{-1} c_{m} z_{m}$. 
We get a triangular product of elementary automorphisms and
all ~$z_i^\psi$ have the required form as stated in (ii).
\end{proof}

\begin{lemma} \label{lm:semistandard-transition}
Let 
$$
  Q_1 = x_1^2 \dots x_k^2 [x_{k+1}, x_{k+2}] \dots [x_{g-1}, x_{g}] 
    z_1^{-1} c_1 z_1 z_2^{-1} c_2 z_2 \dots z_{m}^{-1} c_{m} z_{m}
$$
and 
$$
  Q_2 = x_1^2 \dots x_r^2 [x_{r+1}, x_{r+2}] \dots [x_{g-1}, x_{g}] 
    z_1^{-1} c_1^{\ep_0} z_1 z_2^{-1} c_2^{\ep_1} z_2 \dots z_{m}^{-1} c_{m}^{\ep_{m}} z_{m}, \quad
  \ep_1,\dots,\ep_{m} = \pm1,
$$
be two non-orientable quadratic words of the same genus $g$ with the same 
set of coefficient letters $\set{c_1,\dots,c_m}$.
Then there is an automorphism $\psi$ of
$F_{\set{c_1,\dots,c_{m}} \cup \Var(Q_1)}$ such that $c_i^\psi = c_i$ for all ~$i$, with the 
following properties:
\begin{enumerate}
\item 
$Q_1^\psi$ is conjugate to $Q_2$.
\item 
For any variable $y \in \set{x_1,\dots,x_g,z_1,\dots,z_{m}}$,
$$
  |y^\psi|_Y \le 4g, \quad |y^\psi|_Z \le 4m \quad\text{and}\quad |y^\psi|_{c_i} \le 2, \ i=1,\dots,m
 % |z_i^\psi|_Y \le 2,  \quad |z_i^\psi|_Z \le 4m, \quad |x_i^\psi|_C \le m
$$
where $Y = \set{x_1,\dots,x_g}$ and $Z=\set{z_1,\dots,z_{m}}$.
\end{enumerate}
\end{lemma}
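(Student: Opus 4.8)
The plan is to produce $\psi$ as a composition of three groups of transformations, chosen so that the relevant sub-products are (reverse) triangular and Lemma~\ref{lm:triangular-bound} controls all exponents. Write $p$ and $q$ for the number of indices $i$ with $\ep_i=-1$ and $\ep_i=+1$ respectively; if $p=0$ then $Q_1$ and $Q_2$ agree up to the number of squares and the statement reduces to the first group below, so we may assume $p\ge 1$.

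The first group reconciles the number of squares. Since $Q_1$ and $Q_2$ have the same genus $g$ we have $k\equiv r\pmod 2$; applying Lemma~\ref{lm:extra-autos} (the automorphism $\eta_j$ if $k<r$, and $\th_j^{-1}$ if $k>r$, with $j=|r-k|/2$) to the segment of $Q_1$ formed by one square together with $j$ commutators produces a coefficient-free automorphism $\phi_0$ for which $Q_1^{\phi_0}$ has exactly $r$ squares. Here I would use as ``seed'' a square variable different from a fixed square variable $x_1$ whenever $Q_1$ has at least two squares, so that $x_1^{\phi_0}=x_1$; when $k=1$ one instead notes, via Lemma~\ref{lm:triangular-bound} applied to the triangular form of $\eta_j$ or $\th_j^{-1}$, that the surviving square variables occur with multiplicity one in $\phi_0$, and that $\phi_0$ introduces no letters of $Z$ or $C$. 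As $Q_1$ is non-orientable, $r\ge1$, so a square $x_1^2$ is available for the next step.

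The second group flips coefficients. After a cyclic rotation the block $z_i^{-1}c_iz_i$ sits immediately before $x_1^2$, and the elementary automorphism $(x_1\mapsto z_i^{-1}c_i^{-1}z_i\,x_1)$ followed by $(z_i\mapsto z_ix_1^{-1})$ replaces $z_i^{-1}c_iz_i$ by $z_i^{-1}c_i^{-1}z_i$, leaves the rest of the word untouched, and moves the flipped block to the position just after $x_1^2$; when $\ep_i=+1$ I use instead the conjugating automorphism $(x_1\mapsto(z_i^{-1}c_iz_i)^{-1}x_1(z_i^{-1}c_iz_i))$, which merely slides the block past $x_1^2$ without altering $c_i$. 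Treating the blocks in the order $z_m,z_{m-1},\dots,z_1$ keeps at each stage the next block to be processed in the required position, and ends with the blocks in the order $z_1,\dots,z_m$, now carrying the signs $\ep_i$, immediately after $x_1^2$. Each block is touched once, so each $c_i$ occurs at most twice in the value of any variable, and the $Z$-cost carried by $x_1$ is at most $2p+4q\le 4m$; since these automorphisms touch no variable of $Y$ other than $x_1$, and since $\phi_0$ meets $x_1$ only through a single occurrence, composing with $\phi_0$ preserves the $Z$- and coefficient-bounds.

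The third group restandardizes: the coefficient chunk now lies between $x_1^2$ and the remaining squares and commutators, so it is pushed to the end by conjugating each remaining square variable and each commutator variable by the (now fixed) coefficient chunk, contributing at most $4m$ to the $Z$-count and at most $2$ to each coefficient-count of those variables, none of which yet carry a $Z$- or coefficient-load; a reordering via Lemma~\ref{lm:permut-coefficients}, interleaved appropriately, can be used if one prefers to keep the blocks in place during flipping. Combining the three groups and invoking Lemmas~\ref{lm:triangular-bound}, \ref{lm:permut-coefficients} and~\ref{lm:extra-autos} yields $|y^\psi|_Y\le 4g$, $|y^\psi|_Z\le 4m$, $|y^\psi|_{c_i}\le 2$ with $Q_1^\psi$ conjugate to $Q_2$. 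The hard part is exactly this bookkeeping: the three groups must be performed in an order, and with a choice of the square variable used for flipping, for which no composition inflates a coefficient past $2$ or a $Z$-load past $4m$ — the degenerate cases $g=1$, $k=1$ and $r=1$ in particular must be checked separately — together with verifying that the sub-products involved are (reverse) triangular so that Lemma~\ref{lm:triangular-bound} is applicable.
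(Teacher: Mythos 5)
Your three-stage decomposition (reconcile the genus bookkeeping via Lemma \ref{lm:extra-autos}, then flip coefficients by dragging each block $z_i^{-1}c_iz_i$ past a fixed square $x_1^2$, then push the coefficient chunk back to the end) is a genuinely different route from the paper, which instead sweeps a \emph{single} moving square $x_q^2$, $q=\min\{k,r\}$, once through the whole word using five dedicated automorphisms ($x^2[y,z]\to y^2z^2x^2$, $x^2[y,z]\to[y,z]x^2$, $x^2y^2z^2\to[y,z]x^2$, $x^2z^{-1}cz\to z^{-1}c^{-1}zx^2$, $x^2z^{-1}cz\to z^{-1}czx^2$). The paper's one-pass design is exactly what makes the bounds ``straightforward'': every stationary variable is touched once, and only the moving square accumulates contributions, linearly in the blocks passed. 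Your main-case bookkeeping does go through — for $k\ge 2$ one can take a seed distinct from $x_1$, so $x_1$ and the $z_i$ never occur in the $\phi_0$-images, Group 2 touches only $x_1,Z,C$, and in Group 3 the conjugators by the coefficient chunk $K$ telescope inside each $\phi_0$-image (a point you should state explicitly: without the telescoping, an image of $\phi_0$ containing several $Y$-letters would naively pick up a multiple of $4m$).

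The genuine gap is the case $k=1$, where the seed of $\eta_j$ must be $x_1$ itself. Your fix rests on the claim that ``the surviving square variables occur with multiplicity one in $\phi_0$,'' and this is false for the automorphisms of Lemma \ref{lm:extra-autos}: with $\ga=(x\mapsto x^2yzx^{-1},\,y\mapsto xyzx^{-1},\,z\mapsto xz)$, the seed $x_0$ occurs three times in $x_0^{\eta_j}$ and twice in $x_1^{\eta_j}$. Since Group 2 sends $x_1\mapsto x_1W$ with $|W|_Z$ as large as $4m$ and $|W|_{c_i}$ up to $2$, composing inflates $|x_1^\psi|_Z$ to roughly $12m$ and $|x_1^\psi|_{c_i}$ to $6$, well past the stated bounds; choosing a different surviving square (say the last new one, which has multiplicity one \emph{per image} but appears in three images) still leaves each such image carrying both a $4m$ load from Group 2 and a further $4m$ from the Group 3 conjugation. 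So the degenerate cases you flag as ``to be checked separately'' are not merely tedious: as the argument stands they fail, and some reorganization (e.g.\ performing the coefficient pass before the genus pass, or adopting the paper's single-sweep automorphisms, which avoid the issue entirely because the catalyst square is multiplied only on one side by words in not-yet-locked variables) is required to close them.
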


\begin{proof}
We use the following set of automorphisms: 
\begin{align*}
  x^2 [y,z] &\xrightarrow{\phi_1} y^2 z^2 x^2, &
    \phi_1 &= (x \mapsto y^2zxy^{-1}, \ y \mapsto y z x y^{-1}, \ z \mapsto y x),\\
  x^2 [y,z] &\xrightarrow{\phi_2} [y,z] x^2, &
    \phi_2 &= (x \mapsto [y,z] x [y,z]^{-1}),\\
  x^2 y^2 z^2 &\xrightarrow{\phi_3} [y,z] x^2, &
    \phi_3 &= (x \mapsto [y,z] x y, \ y \mapsto y^{-1} x^{-1} z^{-1}, \ z \mapsto zx),\\
  x^2 z^{-1} c z &\xrightarrow{\phi_4} z^{-1} c^{-1} z x^2 , &
    \phi_4 &= (x \mapsto z^{-1} c^{-1} z x, \ z \mapsto zx),\\
  x^2 z^{-1} c z &\xrightarrow{\phi_5} z^{-1} c z x^2, &
    \phi_5 &= (z \mapsto z x^2).
\end{align*}
We pick up $x_q^2$ with $q = \min\set{k,r}$ and move it to the right of $Q_1$ so that the square/commutator
part becomes the same as in $Q_2$ and each constant $c_i$ gets the exponent sign ${\ep_i}$ as in $Q_2$.
The bounds in (ii) are straightforward.
% with the observation that the largest of the numbers of commutators 
%in $Q_1$ and in $Q_2$ is at most $\frac12 (g-1)$.
\end{proof}

\section{An elimination process for quadratic words} \label{sec:simple-elimination}

There is a general approach to  solutions of equations in free and similar groups which may be called
{\em elimination process}. It usually deals with pairs $(E,\al)$ where $E=1$ is an equation in a 
group $F$ and $\al$ is its solution. On the set of such pairs, a certain set of transformations is defined.
The idea is to reduce step-by-step the cancellation which appears
after substituting ~$\al$ into $E$ and then, starting from a given pair 
$(E,\al)$, to get a new pair $(E_1,\al_1)$ of bounded complexity. In particular, solvability
of equation $E=1$ is reduced to existence of a solution of bounded complexity 
of finitely many such equations $E_1=1$ which can be algorithmically checked.
To describe the solution set of $E=1$ one needs in addition to track transformations homomorphisms. 
(There are several ways to define them; for example, in the case of a free group $F=F_A$
a transformation homomorphism from a pair $(E,\al)$ to a pair $(E_1,\al_1)$ 
can be viewed as an endomorphism $\phi: F_{A\cup X} \to F_{A\cup X}$ 
such that $E^\phi = E_1$ and $\al = \phi\al_1$.)
For free groups, an elimination process has been applied in 
its full strength by Razborov \cite{Razborov-1987} to provide
a description of solutions of an arbitrary system of equations in free groups. In the case of quadratic equations,
there is a much simpler version in \cite{ComerfordEdmunds-1981,ComerfordEdmunds-1989,Grigorchuk-Kurchanov-1989}.

In this section, we apply a version of the elimination process to pairs of the form $(Q,\al)$ where 
$Q \in F_{X}$ is a coefficient-free quadratic word and $\al$ is an evaluation of variables of $Q$, 
i.e.\ a homomorphism $F_X \to F_A$. We use the process to give
a bound on the size of {\em some} solution of a related quadratic equation and thus do not need
to track transformation homomorphisms. A more sophisticated version 
(with tracking transformation homomorphisms) 
will be used in Section~\ref{sec:parametric} to obtain bounds on the size of parametric solutions.

Let $(Q,\al)$ be a pair where $Q \in F_X$ is a coefficient-free quadratic word 
and $\al: F_X \to F_A$ is an evaluation of variables of $Q$.
We define several types of elementary transformations which carry $(Q,\al)$
to a new pair $(Q_1,\al_1)$. In all cases, there will be a transformation homomorphism $\phi \in \End(F_X)$
such that $Q_1 = Q^\phi$ (but we do not require that $\al = \al_1 \phi$). As one of the main inductive
parameters, we take the length of a formal word $Q[\al]$ obtained by substituting in $Q$ values $x^\al$
of all variables $x \in \Var(Q)$ represented by freely reduced words. It is straightforward to check that 
$|Q_1[\al_1]| \le |Q[\al]|$ for all elementary transformations introduced below.

{\em Degenerate transformation}. Application condition: $x^\al = 1$ for some $x \in \Var(Q)$.
We do not change $\al$ and apply the homomorphism $\phi = (x\mapsto 1)$ to $Q$, so $Q_1$ is 
obtained from $Q$ by removing
all occurrences of $x$ and performing subsequent cancellation.

We introduce a natural notion related to this transformation. We say that a coefficient-free quadratic
word $Q_2$ is a {\em homomorphic image} of $Q_1$ if there is an endomorphism 
$\phi\in\End(F_X)$ with $Q_1^\phi=Q_2$. The following fact is an easy exercise.

\begin{proposition}
Let $Q_1$ and $Q_2$ be coefficient-free quadratic words. Then $Q_2$ is a homomorphic image of $Q_1$
if and only if either $Q_1$ and $Q_2$ are of the same orientability
and $genus(Q_2)\le genus(Q_1)$ or $Q_1$ is non-orientable, $Q_2$ is orientable and 
$genus(Q_2) < \frac12 genus(Q_1)$.
\end{proposition}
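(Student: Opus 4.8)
The plan is to first normalize and then treat the two directions separately. The relation ``$Q_2$ is a homomorphic image of $Q_1$'' is unchanged if $Q_1$ is replaced by $Q_1^\sigma$ for $\sigma\in\Aut(F_X)$ (compose the endomorphism with $\sigma^{-1}$ on the left) and if $Q_2$ is replaced by any conjugate or automorphic image of itself (post-compose with the corresponding automorphism or inner automorphism). So by Proposition~\ref{prop:quadratic-standard} I may assume both $Q_1$ and $Q_2$ are coefficient-free standard words, i.e.\ $\prod_{i=1}^{g_1}[x_i,y_i]$ or $x_1^2\cdots x_{g_1}^2$, and likewise for $Q_2$. If $Q_2=1$ then $\mathrm{genus}(Q_2)=0$ and $Q_2$ is orientable, and the asserted condition holds in every case ($0\le g_1$ if $Q_1$ is orientable; $0<g_1/2$, true since $g_1\ge 1$, if $Q_1$ is non-orientable), while conversely every $Q_1$ maps onto $1$ by killing all variables; so from now on assume $Q_2\neq 1$.

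\textbf{Sufficiency ($\Leftarrow$).} I would exhibit explicit endomorphisms. If $Q_1,Q_2$ are both orientable with $g_2\le g_1$, the endomorphism fixing $x_i,y_i$ for $i\le g_2$ and killing $x_i,y_i$ for $g_2<i\le g_1$ sends $\prod_{i=1}^{g_1}[x_i,y_i]$ to $\prod_{i=1}^{g_2}[x_i,y_i]=Q_2$; the non-orientable-to-non-orientable case with $g_2\le g_1$ is identical, killing the excess square variables. In the remaining case $Q_1$ is non-orientable, $Q_2$ orientable of genus $g_2\ge 1$, and $g_1\ge 2g_2+1$: first kill variables to pass from $x_1^2\cdots x_{g_1}^2$ to $x_1^2\cdots x_{2g_2+1}^2$; this word is equivalent in the sense of Definition~\ref{def:quadratic-equivalent} to $[x_1,x_2]\cdots[x_{2g_2-1},x_{2g_2}]\,x_{2g_2+1}^2$ (both $S_Q$ are the closed non-orientable surface of genus $2g_2+1$), so by Proposition~\ref{prop:quadratic-standard} there is $\mu\in\Aut(F_X)$ carrying the former to a conjugate of the latter; applying $(x_{2g_2+1}\mapsto 1)$ and then a suitable inner automorphism produces $\prod_{i=1}^{g_2}[x_{2i-1},x_{2i}]$, which is $Q_2$ up to renaming variables.

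\textbf{Necessity ($\Rightarrow$).} Write $Q_2=Q_1^\phi$, $\phi\in\End(F_X)$, with $Q_1$ standard and $Q_2\neq 1$. Orientability is governed by the abelianization: a coefficient-free quadratic word is orientable iff it lies in $[F_X,F_X]$ (each variable occurs twice, so the net exponent sum vanishes iff every variable occurs as $x$ and $x^{-1}$), and this property is inherited by $Q_2=\phi(Q_1)$; hence $Q_1$ orientable forces $Q_2$ orientable, so either $Q_1,Q_2$ have the same orientability or $Q_1$ is non-orientable and $Q_2$ orientable. For the genus bounds I would use the standard values of commutator length and ``square length'' in a free group. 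If $Q_1=\prod_{i=1}^{g_1}[x_i,y_i]$, then $Q_2=\prod_{i=1}^{g_1}[\phi x_i,\phi y_i]$ is a product of $g_1$ commutators; since $Q_2$ is then orientable of some genus $g_2\ge 1$ and the commutator length of $\prod_{i=1}^{g_2}[x_i,y_i]$ in a free group equals $g_2$ (classical, cf.\ \cite{Culler-1981}), we get $g_2\le g_1$. If $Q_1=x_1^2\cdots x_{g_1}^2$, then $Q_2=\prod_{i=1}^{g_1}(\phi x_i)^2$ is a product of $g_1$ squares; if $Q_2$ is non-orientable of genus $g_2$, then since the minimal number of squares expressing $x_1^2\cdots x_{g_2}^2$ in a free group is $g_2$, again $g_2\le g_1$; and if $Q_2$ is orientable of genus $g_2\ge 1$, then since $\prod_{i=1}^{g_2}[x_i,y_i]$ is \emph{not} a product of $2g_2$ or fewer squares in a free group, we obtain $g_1\ge 2g_2+1$, i.e.\ $g_2<g_1/2$. (To invoke these facts cleanly one first retracts onto the free group on $\Var(Q_2)$, so that the elements $\phi x_i$ lie there.)

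\textbf{Main obstacle.} The sharp point is exactly the last bound --- that $\prod_{i=1}^{g}[x_i,y_i]$ requires at least $2g+1$ squares (equivalently, the non-orientable genus of an orientable genus-$g$ quadratic word is $2g+1$, not $2g$); this is what gives strict inequality in the non-orientable-to-orientable case and distinguishes, say, the torus word $[x,y]$ from $x_1^2x_2^2$. I would either cite it from the surface-theoretic analysis of equations in free groups (Culler's surfaces, or Edmonds' classification of degree-one maps between closed surfaces via pinch maps), or prove it by lower-central-series bookkeeping: for $g=1$, a hypothetical $[x,y]=u^2v^2$ forces $\bar v=-\bar u$ in the abelianization, hence $[x,y]=(ucu^{-1})c$ with $c\in[F,F]$, and then $\gamma_2/\gamma_3\cong\mathbb Z$ gives a contradiction (the left side is a generator, the right side is even), and the general case follows similarly. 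The other two genus bounds reduce to the classical ``commutator length $=$ genus'' and ``square length $=$ genus'' facts, which are routine to cite.
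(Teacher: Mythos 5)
The paper offers no proof of this proposition --- it is stated as ``an easy exercise'' --- so there is no argument of the authors' to compare yours against; I can only assess your proof on its own terms. Its overall structure is correct and is the natural one: reduce both words to coefficient-free standard forms (for which you should cite Lemma \ref{lm:cffree-reduction} rather than Proposition \ref{prop:quadratic-standard}, since you need an automorphism of $F_X$ itself, not of $F_{A\cup X}$); get sufficiency from explicit endomorphisms, using that $x_1^2\cdots x_{2g_2+1}^2$ and $[x_1,x_2]\cdots[x_{2g_2-1},x_{2g_2}]x_{2g_2+1}^2$ are automorphic images of one another (this is exactly Lemma \ref{lm:extra-autos}); and get necessity from the abelianization criterion for orientability together with the three lower bounds: commutator length of $\prod[x_i,y_i]$ is $g$, square length of $\prod x_i^2$ is $g$, and square length of $\prod[x_i,y_i]$ exceeds $2g$. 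Your trivial-word and parity bookkeeping ($g_2<g_1/2$ iff $g_1\ge 2g_2+1$) is right.

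Two soft spots, neither fatal. First, for the $2g+1$ bound you prove only $g=1$ and say the general case ``follows similarly''; it does, but the step worth writing is this: from $\prod[x_i,y_i]=u_1^2\cdots u_h^2$ one gets $\sum\bar u_j=0$ in $F^{\mathrm{ab}}$, and then in $\Lambda^2(F^{\mathrm{ab}}\otimes\mathbb F_2)$ the identity $\sum\bar x_i\wedge\bar y_i=\sum_{i<j}\bar u_i\wedge\bar u_j$; the left side has rank $2g$ while the right side lies in $\Lambda^2$ of the span of the $\bar u_j$, which has dimension at most $h-1\le 2g-1$, hence rank at most $2g-2$ --- contradiction. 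Second, the claim that square length of $x_1^2\cdots x_g^2$ equals $g$ is indeed classical and citable (it is the nonexistence of degree-one maps $N_h\to N_g$ for $h<g$, via surjectivity on $H_1(\cdot;\mathbb F_2)$), but be aware that it does \emph{not} follow from the same $\gamma_2/\gamma_3$ computation: the analogous $\Lambda^2\pmod 2$ argument only yields $h\ge g-1$, because the form $\sum_{i<j}\bar x_i\wedge\bar x_j$ has rank $g-1$ when $g$ is odd. So that one bound really does need the topological (or an equivalently refined algebraic) input, and your phrase ``routine to cite'' is accurate while ``follows similarly'' would not be.
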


{\em Cancellation reduction}. 
Application condition: a non-trivial cancellation in $Q[\al]$ occurs between 
the values of two neighboring variables $x^\ep$ and $y^\de$ ($\ep,\de=\pm1$). There are two cases.

{\em Case\/} 1: $x \ne y$. Then for some $u, v$ and $w\ne 1$,
$$
  (x^\ep)^\al = uw \quad\text{and}\quad (y^\de)^\al = w^{-1} v
$$
where equality stands for graphical equality of words. 
We take a variable $z \notin \Var(Q)$ and define a transformation homomorphism $\phi$ by
$$
  \phi = (x^\ep \mapsto x^\ep z, \ y^\de \mapsto z^{-1} y^\de).
$$
To define $\al_1$ we set
$$
  (x^\ep)^{\al_1} = u, \quad (y^\de)^{\al_1} = v, \quad  z^{\al_1} = w
$$
and $h^{\al_1} = h^\al$ for all other variables $h\ne x,y,z$. 

{\em Case\/} 2: $x=y$. Then $x^\ep y^\de$ becomes $(x^\ep)^2$ and for some $u$ and $w \ne 1$,
$$
  (x^\ep)^\al = w^{-1} u w.
$$
In a similar manner, we take
$$
  \phi = (x^\ep \mapsto z^{-1} x^\ep z)
$$
and define $\al_1$ by 
$$
(x^\ep)^{\al_1} = u, \quad z^{\al_1} = w.
$$
After application of this transformation we get a pair $(Q_1,\al_1)$ which satisfies the strict
inequality $|Q_1[\al_1]| < |Q[\al]|$.

{\em Splitting a variable.} Application condition: $x$ is a variable in $Q$ with $x^\al \ne 1$.
We introduce a new variable $y \notin \Var(Q)$ and apply to $Q$ the 
substitution $\phi = (x \mapsto xy)$.
For $\al_1$, we take any homomorphism $F_{X} \to F_A$ such that 
$x^{\al_1} y^{\al_1} = x^\al$, the product $x^{\al_1} y^{\al_1}$ is reduced 
and $\al_1$ coincides with $\al$ on all variables other than $x$ and $y$.

As an illustration, we prove a well known fact.

\begin{proposition}[\cite{Wicks-1972}] \label{prop:Wicks} 
Let $w \in F_A$ be a value of a coefficient-free quadratic word $Q$.
Then there exists a homomorphic image $R$ of $Q$ 
and an evaluation $\be: F_X \to F_A$ such that $x^\be\ne 1$ for all $x \in \Var(R)$, 
the word $R[\be]$ is freely reduced and equals to ~$w$.
\end{proposition}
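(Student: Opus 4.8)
The plan is to fix an evaluation $\alpha\colon F_X\to F_A$ with $Q^\alpha=w$ (one exists since $w$ is a value of $Q$) and to run the elimination process on the pair $(Q,\alpha)$, applying degenerate transformations and cancellation reductions --- but never the splitting transformation --- in any order, as long as one of them is applicable. When none is applicable I will take $R$ and $\beta$ to be the current quadratic word and evaluation.

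First I would check the invariants maintained by these two transformations. Each of them again produces a pair consisting of a coefficient-free quadratic word and an evaluation of its variables: quadraticity survives because every variable of $Q$ either keeps both of its occurrences in $Q_1$ or loses both of them, and the new variable introduced by a cancellation reduction occurs twice or not at all. The word $Q_1=Q^\phi$ remains a homomorphic image of the original quadratic word, since a composition of endomorphisms of $F_X$ is an endomorphism. And the value is preserved, $Q_1^{\alpha_1}=Q^\alpha=w$: this is a direct computation from the defining formulas of the transformation homomorphism $\phi$ and of $\alpha_1$ in each case (in essence $\phi\alpha_1$ agrees with $\alpha$ on the variables involved).

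For termination I would use the lexicographically ordered quantity $\bigl(\,|Q[\alpha]|,\ |\Var(Q)|\,\bigr)$. A degenerate transformation deletes a variable $x$ with $x^\alpha=1$ and then freely reduces, which cannot increase the formal length $|Q[\alpha]|$ and strictly decreases $|\Var(Q)|$. A cancellation reduction strictly decreases $|Q[\alpha]|$: in Case~2 this is the strict inequality already observed in the text, while in Case~1 the cancelling factor $w$ (of length at least $1$) is deleted from both occurrences of $x$ and both occurrences of $y$ --- a drop of $4|w|$ --- whereas the fresh variable $z$ occurs $0$ or $2$ times in $Q_1$ and so contributes at most $2|w|$, a net decrease of at least $2|w|$. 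Hence the process halts. At the terminal pair, no degenerate transformation applies, so every variable of $R$ has nontrivial value; and since $R$ is coefficient-free, if $R[\beta]$ were not freely reduced there would be a cancellation between the values of two neighboring variables, so a cancellation reduction would apply --- a contradiction. Thus $R[\beta]$ is freely reduced, and because $R^\beta=w$ this forces $R[\beta]=w$, with $R$ a homomorphic image of $Q$. The only step requiring genuine care is the strict-decrease bookkeeping in Case~1 of the cancellation reduction, namely accounting for the length of the newly introduced variable against the length freed from $x$ and $y$; everything else is routine.
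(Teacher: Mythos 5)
Your proposal is correct and follows exactly the paper's own (very terse) argument: start from a pair $(Q,\alpha)$ with $Q^\alpha=w$ and apply degenerate transformations and cancellation reductions until neither is applicable, the terminal pair being $(R,\beta)$. The extra bookkeeping you supply --- the lexicographic measure $(|Q[\alpha]|,|\Var(Q)|)$ and the accounting of the new variable $z$ against the $4|w|$ removed from the values of $x$ and $y$ in Case~1 --- is accurate and merely makes explicit what the paper declares ``straightforward to check.''
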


\begin{proof}
Let $w = Q^\al$. We start with the pair $(Q,\al)$ and apply degenerate transformations and
cancellation reductions until possible. For the resulting pair $(R,\be)$, $R$ and $\be$ are as required.
\end{proof}

%An immediate consequence of Proposition \ref{prop:Wicks} and Lemma \ref{lm:cffree-reduction} is
\begin{corollary} \label{cor:quadratic-standard-cffree}
\begin{enumerate}
\item 
If $w \in F_A$ is a value of a coefficient-free orientable quadratic word of genus $g$ then $w$
is a product of $g$ commutators $[u_1, v_1] \dots [u_g, v_g]$ with $|u_i|, |v_i| \le 2|w|$ for all $i$.
\item
If $w \in F_A$ is a value of a coefficient-free non-orientable quadratic word of genus $g$ then
$w$ can be represented as a product $u_1^2 \dots u_k^2 [u_{k+1},u_{k+2}] \dots [u_{g-1},u_g]$ 
where $|u_i| \le 2|w|$ for all $i$.
\end{enumerate}
\end{corollary}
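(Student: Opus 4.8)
The plan is to derive the corollary from Proposition \ref{prop:Wicks} together with the economical reduction provided by Lemma \ref{lm:cffree-reduction}. Write $w = Q^\al$ with $Q$ a coefficient-free quadratic word of genus $g$. First I would apply Proposition \ref{prop:Wicks} to pass from $(Q,\al)$ to a pair $(R,\be)$ in which $R$ is a coefficient-free quadratic word that is a homomorphic image of $Q$, the evaluation $\be$ satisfies $x^\be\ne1$ for every $x\in\Var(R)$, and the formal word $R[\be]$ is freely reduced and equal to $w$. The role of this step is that, since $R$ is quadratic and no cancellation occurs in $R[\be]$, every variable $x$ contributes exactly $2|x^\be|$ letters to $R[\be]$, so
\[
  \sum_{x\in\Var(R)} |x^\be| \;=\; \frac12\,|R[\be]| \;=\; \frac12\,|w|.
\]

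Next I would feed $R$ into Lemma \ref{lm:cffree-reduction}. When $R$ is orientable this yields an automorphism $\psi$ with $R^\psi = [x_1,y_1]\dots[x_{g'},y_{g'}]$, where $g'=\mathrm{genus}(R)$, and with $|x_i^{\psi^{-1}}|_x,\,|y_i^{\psi^{-1}}|_x\le4$ for all $i$ and all $x\in\Var(R)$; when $R$ is non-orientable it yields a semi-standard word $R^\psi = x_1^2\dots x_k^2[x_{k+1},y_{k+1}]\dots$ with the same occurrence bounds on $\psi^{-1}$. Setting $\ga=\psi^{-1}\be$ we obtain $w = R^\be = (R^\psi)^\ga$, which exhibits $w$ as the required product of commutators (orientable case) or of squares followed by commutators (non-orientable case), with factors $u_i=x_i^\ga$ and $v_i=y_i^\ga$. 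For each generator $x_i$ occurring in $R^\psi$ we have $x_i^\ga = (x_i^{\psi^{-1}})^\be$, and passing from a word to its image under $\be$ and then freely reducing cannot increase length, so
\[
  |x_i^\ga| \;\le\; \sum_{x\in\Var(R)} |x_i^{\psi^{-1}}|_x\,|x^\be| \;\le\; 4\sum_{x\in\Var(R)}|x^\be| \;=\; 2|w|,
\]
and likewise $|y_i^\ga|\le2|w|$; this is exactly the length estimate claimed.

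It remains to arrange that the number of commutators (resp.\ the genus of the square/commutator word) equals $g$. Since $R$ is a homomorphic image of $Q$, if $Q$ is orientable then $R$ is orientable with $g'\le g$, and I would append $g-g'$ trivial commutators $[1,1]$ on the right to obtain a product of exactly $g$ commutators. If $Q$ is non-orientable then $R$ is either non-orientable with $\mathrm{genus}(R)\le g$, in which case I would insert $g-\mathrm{genus}(R)$ trivial squares $1^2$ into the square part of the semi-standard factorisation of $w$; or $R$ is orientable with $\mathrm{genus}(R)=g'<\frac12 g$, in which case I would rewrite the product of $g'$ commutators obtained above as $1^2 \dots 1^2\,[x_1^\ga,y_1^\ga]\dots[x_{g'}^\ga,y_{g'}^\ga]$ with $g-2g'\ge1$ leading trivial squares. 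In every case the padding factors are trivial and so trivially satisfy the length bound.

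The argument is routine once Proposition \ref{prop:Wicks} and Lemma \ref{lm:cffree-reduction} are in hand; the one point needing a little care is this last bookkeeping step, namely checking that every collapse permitted when passing to a homomorphic image --- in particular a non-orientable word mapping onto an orientable one --- can be repadded back to a standard or semi-standard word of the original genus without disturbing the length estimates.
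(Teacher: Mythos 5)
Your proposal is correct and follows essentially the same route as the paper: apply Proposition \ref{prop:Wicks} to get a cancellation-free homomorphic image, then Lemma \ref{lm:cffree-reduction} to put it in (semi-)standard form with the $\le 4$ occurrence bound, and pad with trivial squares or commutators to restore the genus. The only cosmetic difference is that the paper first passes to the cyclically reduced form of $w$ and conjugates back at the end, whereas you work with $w$ directly; both yield the stated bound $2|w|$.
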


\begin{proof}
Let $Q$ be the quadratic word from the hypothesis and
let $w = v^{-1} w_1 v$ where $w_1$ is cyclically reduced.
By Proposition \ref{prop:Wicks} and Lemma \ref{lm:cffree-reduction}, there is a standard orientable
or semi-standard non-orientable quadratic word $R$ and an evaluation $\be$ of variables in $R$
such that $R$ is a homomorphic image of $Q$, $R^\be = w_1$ and $|x^\be| \le 2|w_1|$ 
for every variable $x \in \Var(R)$. 
If needed by we add to $R$ extra commutators or squares with new variables $y$ with values $y^\be=1$ 
so that $R$ becomes equivalent to $Q$.
The required $u_i$'s and $v_i$'s are obtained by conjugating the values $x^\be$ with $v$.
\end{proof}

In what follows, we apply an elimination process to a pair $(Q,\al)$ 
where $\al$ is a solution of a standard quadratic equation written in the form
\begin{equation} \label{eq:split-equation}
  Q = z_1^{-1} c_1 z_1 z_2^{-1} c_2 z_2 \dots z_{m}^{-1} c_{m} z_{m}. 
\end{equation}
Our goal is to find a solution of this equation satisfying the bound from Theorem \ref{thm:sol-bound}.
To do this, using elimination process we find first a
short (in a certain sense) solution of an equivalent quadratic equation of the same
form, for given fixed elements $c_1,\dots,c_m \in F_A$.
Then, 
by using an ``economical'' automorphism from Section \ref{sec:quadratic-transformations} 
we reduce the equation to the ininial form thus producing the required 
``short'' solution of the original equation \eqref{eq:split-equation}. 

Instead of keeping fixed the element $Q^\al$ during transformations as in the proof of Proposition \ref{prop:Wicks}
we will keep the property that $Q^\al$ is a product of conjugates of fixed elements $c_1,\dots,c_m \in F_A$
(that is, $w = u_1^{-1} c_1 u_1 \dots u_m^{-1} c_m u_m$ for some $u_1,\dots,u_m$),
up to changing exponent signs of $c_i$ in the non-orientable case.

\begin{definition} \label{def:non-splittable}
Let $w,c_1,c_2,\dots,c_m \in F_A$ be cyclically reduced elements of $F_A$ and let
$w$ be a product of conjugates of $c_1,\dots,c_m$.
We say that $w$ is a {\em short} product of conjugates of $c_1,\dots,c_m$ if $w$,
viewed as a freely reduced cyclic word, does not have a form
$w = u h v h^{-1}$ where $h$ is non-empty and the set $\set{c_i}$ can be properly partitioned into two 
subsets
$\set{c_{p_i}}$ and $\set{c_{q_i}}$ so that $u$ is a product of conjugates of $c_{p_i}$'s and
$v$ is a product of conjugates of ~$c_{q_i}$'s.
\end{definition}

\begin{definition} (non-orientable version) \label{def:non-splittable-nonorient}
Let $w,c_1,c_2,\dots,c_m \in F_A$ be cyclically reduced elements of $F_A$.
We say that $w$ is a {\em unsigned product of conjugates of $c_1,\dots,c_m$} if
$w$ is a product of conjugates of $c_1^{\ep_1},\dots,c_m^{\ep_m}$ for some $\ep_1,\dots,\ep_m=\pm1$.

We say that $w$ is a {\em short} unsigned product of conjugates of $c_1,\dots,c_m$ if 
$w$, viewed as a freely reduced cyclic word, 
does not have a form $w = u h v h^{-1}$ where $h$ is non-empty and the set ~$\set{c_i}$ 
can be properly partitioned into two subsets $\set{c_{p_i}}$ and $\set{c_{q_i}}$ so that $u$ is a unsigned
product of conjugates of $c_{p_i}$'s and $v$ is a unsigned product of conjugates of $c_{q_i}$'s.
\end{definition}

It is not hard to prove that if $w$ is a short or short unsigned product of conjugates
of $c_1,\dots,c_m$ then then $|w| \le \sum |c_i|$. This is essentially a consequence of the van Kampen lemma,
see Lemma \ref{lm:trivial-tree-part} below.

\begin{proposition} \label{prop:main-reduction}
Let $c_1,c_2,\dots,c_m \in F_A$ and
$Q$ be a coefficient-free (orientable or non-orientable) quadratic word of genus $g$.
Suppose that the quadratic equation
$$
  Q = z_1^{-1} c_1 z_1 \dots z_{m}^{-1} c_{m} z_{m}
$$
has a solution in $F_A$.
\begin{enumerate}
\item 
If $Q$ is orientable then a short product of conjugates of $c_1$, $\dots$, $c_{m}$ 
is a product of at most $g$ commutators in $F_A$.
\item
If $Q$ is non-orientable then a short unsigned product of conjugates of $c_1$, $\dots$, $c_{m}$ 
is a product of at most $g$ squares in $F_A$.
\end{enumerate}
\end{proposition}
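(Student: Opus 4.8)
The plan is to carry out the orientable case in detail; the non-orientable case runs parallel, with ``commutator'' systematically replaced by ``square'', Corollary~\ref{cor:quadratic-standard-cffree}(i) by~(ii) (and Lemma~\ref{lm:extra-autos} used to rewrite any commutators arising as squares), and Definition~\ref{def:non-splittable} by Definition~\ref{def:non-splittable-nonorient}. We may assume $c_1,\dots,c_m$ cyclically reduced. First fix a solution $\al$ of $Q=z_1^{-1}c_1z_1\cdots z_m^{-1}c_mz_m$ and set $w:=Q^\al$. Then $w=(z_1^\al)^{-1}c_1(z_1^\al)\cdots(z_m^\al)^{-1}c_m(z_m^\al)$ is a product of conjugates of the $c_i$ (one of each), while on the other hand $w$ is a value of the coefficient-free orientable quadratic word $Q$ of genus~$g$, so by Corollary~\ref{cor:quadratic-standard-cffree}(i) it is a product of $g$ commutators in $F_A$. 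Let $\mathcal{W}$ be the set of cyclically reduced elements of $F_A$ that are at once a product of one conjugate of each $c_i$ and a product of at most $g$ commutators; then $w\in\mathcal{W}$, so $\mathcal{W}\ne\eset$. I would choose $w_0\in\mathcal{W}$ of minimal length and show that $w_0$ is \emph{short} in the sense of Definition~\ref{def:non-splittable}. As $w_0\in\mathcal{W}$, this yields the proposition.

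To prove $w_0$ short, I would argue by contradiction. If $w_0$ is not short then, as a freely reduced cyclic word, $w_0=uhvh^{-1}$ with $h\ne1$, where $\set{1,\dots,m}$ splits into nonempty sets $\set{p_j}$ and $\set{q_j}$ with $u$ a product of one conjugate of each $c_{p_j}$ and $v$ a product of one conjugate of each $c_{q_j}$; note that, $w_0$ being cyclically reduced and $h$ nontrivial, neither $u$ nor $v$ can be trivial, so each really involves its coefficients. The key step is then a genus estimate: \emph{because $w_0$ is a product of at most $g$ commutators and $w_0=uhvh^{-1}$ exhibits no cancellation between the four blocks, $u$ is a product of $g_1$ commutators and $v$ a product of $g_2$ commutators with $g_1+g_2\le g$.} Granting this, $uv$ is again a product of one conjugate of each $c_i$ (merely reshuffled) and, concatenating the two commutator expressions, a product of $g_1+g_2\le g$ commutators; hence its cyclic reduction lies in $\mathcal{W}$ and has length at most $|u|+|v|<|u|+2|h|+|v|=|w_0|$, contradicting minimality. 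So $w_0$ is short.

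The hard part is the genus estimate, and here I would go through surfaces (equivalently, Lyndon--van Kampen diagrams). Realize $w_0$ as the boundary label of a singular orientable surface $\Sigma$ with a single boundary component and of genus $g'\le g$, i.e.\ of a map of $\Sigma$ to a wedge of circles presenting $F_A$ that reads $w_0$ along $\partial\Sigma$. Since $w_0$ is cyclically reduced, $\Sigma$ can be taken without foldable configurations on its boundary, and then --- this is exactly the economical unfolding of diagrams asserted in Proposition~\ref{prop:diagram-unfolding}, in the spirit of Lemma~\ref{lm:trivial-tree-part} --- there is a properly embedded arc of $\Sigma$ along which one may cut so that the $u$-part and the $v$-part of $\partial\Sigma$ end up on different pieces, the two pieces carrying boundary labels (freely) equal to $u$ and to a conjugate of $v$. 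An Euler characteristic count then gives $\mathrm{genus}(u)+\mathrm{genus}(v)\le g'\le g$ (with a strict drop if the cutting arc is non-separating), where $\mathrm{genus}$ means commutator length, a conjugacy invariant --- so the conjugation on the $v$-side does no harm. Beyond isolating this ``trivial-label'' separating arc from reducedness of $w_0$, the remaining matters --- the degenerate cases, the non-orientable accounting via Lemma~\ref{lm:extra-autos}, and tracking the coefficient partition across the cut --- are routine.
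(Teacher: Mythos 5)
The central step of your argument --- the ``genus estimate'' asserting that if $w_0=uhvh^{-1}$ is graphically (cyclically) reduced with no cancellation between the four blocks and is a product of at most $g$ commutators, then $u$ and $v$ are products of $g_1$ and $g_2$ commutators with $g_1+g_2\le g$ --- is false. Take $u=[a,b]$, $v=[b,a]$, $h=c$: then $uhvh^{-1}=[a,b]\,c\,[b,a]\,c^{-1}=[[a,b],c]$ is cyclically reduced, exhibits no cancellation between blocks, and is a single commutator, yet $u$ and $v$ each have commutator length $1$, so $g_1+g_2\ge 2>1$. (This arises in a legitimate instance of the proposition: $m=2$, $c_1=[a,b]$, $c_2=[b,a]$, $Q=[x,y]$, $g=1$.) The surface argument you sketch cannot repair this: the separating arc with trivial label that you need lives, if anywhere, in the planar van Kampen diagram over $\set{c_1,\dots,c_m}$ --- which is what Proposition~\ref{prop:diagram-unfolding} and Lemma~\ref{lm:trivial-tree-part} actually concern --- and not in the genus-$g$ surface witnessing the commutator-length bound. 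These are two different objects carrying the two different structures on $w_0$, and nothing forces the $u$/$v$ splitting coming from the coefficient diagram to be realized by an embedded separating arc on the genus surface. Consequently your inductive step ``replace $w_0$ by the cyclic reduction of $uv$'' is unjustified: you have not shown that $uv$ still has commutator length at most $g$, and this cannot be read off from the element $w_0$ alone.

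The paper avoids exactly this trap by never forgetting the quadratic word: the induction runs on pairs $(R,\be)$ where $R$ is a homomorphic image of $Q$ (hence automatically of genus at most $g$) and $R^\be$ is a product of conjugates of the $c_i$, with inductive parameter $(|R^\be|,|R[\be]|)$. After splitting variables so that $h$ and $h^{-1}$ become values of single variables $x_1$ and $x_2$, it distinguishes two cases: if $x_1=x_2$ it sets that variable to $1$ and does obtain $uv$ as a value of the same quadratic word; if $x_1\ne x_2$ it does \emph{not} pass to $uv$ but to the different element $U_2^\be\,u\,(U_2^\be)^{-1}\cdot hvh^{-1}$, obtained by re-evaluating $x_1$ using its second occurrence in $R$ --- a move that is invisible at the level of the element $w_0$ and is precisely what your element-only minimization cannot perform. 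So the gap is not a missing technical lemma that could be slotted in: the strategy of minimizing over elements that are simultaneously products of conjugates and $g$-fold commutator products discards the quadratic-word structure that the reduction step genuinely requires.
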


\begin{proof}
We consider first the case of orientable $Q$.

Let $\al$ be a solution of the equation from the hypothesis of the proposition. 
We describe a sequence of transformations starting with the pair $(Q, \ \al)$.
Any moment we will have a coefficient-free quadratic word ~$R$ and a homomorphism 
$\be: F_{X} \to F_A$ such that $R$ is a homomorphic image of $Q$ and
$R^\be$ is a product of conjugates of $c_1,\dots,c_m$. 
For the inductive parameter that will be decreased during transformations, we take the pair
$(|R^\be|, |R[\be]|)$ with lexicographic ordering. Recall that $|R^\be|$ denotes the 
length of the freely reduced word representing $R^\be$ and $|R[\be]|$ denotes the length of
the formal word $R[\be]$.

If $x^\be = 1$ for some variable $x \in \Var(R)$ then we apply the degenerate transformation 
$(x \mapsto 1)$ to $(R,\be)$ decreasing the number of variables in $R$.
The element $R^\be$ and the formal word $R[\be]$ are not changed.

If $R[\be]$ has a cancellation then we apply a cancellation
reduction so that $R^\be$ is not changed but the length of $R[\be]$ decreases.

If $R[\be]$ has a cyclic reduction then we change $\be$ by conjugating all the values $x^\be$
with the same element of $F_A$ decreasing the length of $R^\be$.
Observe also that using this operation we can change $R^\be$ to its any cyclic shift not 
increasing parameter $(|R^\be|, |R[\be]|)$.

Thus we assume that $R[\be]$ is cyclically reduced (and hence is the cyclically reduced form of ~$R^\be$) 
and all variables $x$ in $R$ have non-trivial values $x^\be \ne 1$.

Suppose that $R^\be$ is not a short product of conjugates of $c_1,\dots,c_m$, that is, 
up to a cyclic shift we have 
$R^\be = u h v h^{-1}$ where $u$, $v$ and $h\ne 1$ are as in Definition \ref{def:non-splittable}.
Splitting variables if needed we may assume that 
%$u$, $h$, $v$ and $h^{-1}$ are values of
%some subwords of $R$. Moreover, shortening $z$ and splitting variables again if needed 
%we may assume that 
$h$ and $h^{-1}$ are values of single variables in $R$, that is,
$$
  R = U x_1^\ep V x_2^\de, \quad 
  U^\be = u, \ (x_1^\ep)^\be = h, \ V^\be = v, \ (x_2^\de)^\be = h^{-1}.
$$
%(Here we assume also without loss of generality that the sign of the occurrence of $x_1$ is $+1$.)

We replace $\be$ by a new evaluation $\be_1$ 
such that the length of cyclically reduced form of $R^\be$ decreases and 
$R^{\be_1}$ is still a product of conjugates of $c_1,\dots,c_m$. After that we go back to 
the start of our procedure replacing $R^{\be_1}$ by its cyclically reduced form.
There are two cases.

{\em Case\/} 1: $x_1 = x_2$. 
We define $x^{\be_1}=1$ and leave the values
of all other variables unchanged.
Then $R^{\be_1} = (UV)^\be = uv$ and by the condition of Definition ~\ref{def:non-splittable},
$R^{\be_1}$ is a product of conjugates of $c_1,\dots,c_m$. 

{\em Case\/} 2: $x_1 \ne x_2$. Then $x_1$ occurs either in $U$ or in $V$. Suppose
that $U = U_1 x_1^{-\ep} U_2$ (the case when $x_1$ occurs in $V$ is similar). We take
$x_1^{\be_1} = ((U_2 U_1)^\ep)^\be$ and $y^{\be_1} = y^\be$ for all other $y$. 
Then 
$$
  R^{\be_1} = (U_2 U_1)^\be v h^{-1} = U_2^\be u (U_2^\be)^{-1} \cdot h v h^{-1}
$$ 
and hence $R^{\be_1}$ is a product of conjugates of $c_1,\dots,c_m$
but now we have $|R^{\be_1}| \le |R^\be| - 2|h|$.

The description of the transformation sequence is finished. 
After finitely many steps we get a pair $(R,\be)$ such that 
$R^\be$ is a short product of conjugates of $c_1,\dots,c_m$. 
Since $R$ is a homomorphic image of $Q$, it is a product of at most $g$ commutators in $F_X$.
This proves (i).

In the non-orientable case, the argument is similar with the difference that
we keep $R^\be$ being an unsigned product of conjugates of $c_1$, $\dots$, $c_m$. (In Case 2,
if $x_1$ occurs in $R$ twice with the same exponent $\ep$ then some exponent signs of $c_1,\dots,c_m$
are changed after the transformation.)
\end{proof}

\section{Unfolding Lyndon--van Kampen diagrams} \label{sec:diagrams}

Using Proposition \ref{prop:main-reduction} and automorphisms
from Section ~\ref{sec:quadratic-transformations} it is not difficult to get a bound on the size of the shortest
solution of a standard quadratic equation in $F_A$. The bound is $N n(Q) c(Q)$ 
in the case of orientable $Q$ and and $N n(Q)^2 c(Q)$ in the case of non-orientable ~$Q$. 
To improve this bound by factor $n(Q)$ we prove an extra statement. 
Informally speaking, it says that a Lyndon-van Kampen
diagram can be unfolded in an economical way.

We recall some definitions and facts about Lyndon--van Kampen diagrams
(or simply diagrams from now on for brevity).

By a diagram we mean a finite 2-dimensional cell complex $\De$ embedded in the plane $\Reals^2$ and
endowed with a labelling function $\la$ over an alphabet $Y$. The latter means that for any directed
edge $e$ of $\De$ the label $\la(e)$ is fixed which is 
either a letter in $Y^{\pm 1}$ or is empty. For any two mutually inverse directed edges $e$ and $e^{-1}$, we have
$\la(e^{-1}) = (\la(e))^{-1}$. The labelling function is naturally extended to 
paths (viewed as sequences of directed edges) in the 1-skeleton $\De^{(1)}$ of $\De$.
The label $\la(p)$ of a path $p$ is a word in $Y^{\pm1}$ which we will often identify with an element of $F_Y$.
We call the label of the boundary loop of a 2-cell $D$ of $\De$ the {\em boundary label of $\De$}, 
defined up to a cyclic shift.

We assume that all diagrams are connected and simply connected. 
We assume also that a diagram $\De$ is endowed with  
a fixed {\em base vertex} $\nu_0$ in the boundary of $\De$ and, moreover, a boundary loop
of $\De$ at $\nu_0$ is fixed. (In general, the boundary loop starting at a given boundary vertex 
may be not unique). 
The label of the fixed boundary loop of $\De$ is called the {\em boundary label of $\De$}.

We admit that the boundary label of a 2-cell $D$ of a diagram $\De$ is the empty word or has 
the form $y y^{-1}$, $y \in Y$. In this case we call $D$ a {\em trivial} 2-cell of {\em vertex}
or {\em edge} type, respectively. 
Otherwise a 2-cell is called {\em nontrivial}. The boundary label of a nontrivial 2-cell is always
assumed to be cyclically reduced.

If words $c_1$, $\dots$, $c_m$ 
are boundary labels of all nontrivial 2-cells of $\De$ then
we call $\De$ a {\em diagram over the set $\set{c_1,\dots,c_m}$}. (To be more formal,
$\set{c_1,\dots,c_m}$ should be viewed as a multiset since we admit that some $c_i$ are repeated.) 

Let $w,c_1,\dots,c_m \in F_Y$. 
A variant of the van Kampen lemma says that $w$ is a product of conjugates of $c_1,\dots,c_m$ if
and only if there is a diagram $\De$ with labelling function over ~$Y$
over the set $\set{c_1,\dots,c_m}$, with boundary label $w$. 
The proof can be found in
\cite[proof of Theorem V.1.1]{Lyndon-Schupp}. Note that 
cut off operations of spherical diagrams in the construction of ~$\De$ can be avoided
since we admit empty labels of edges and trivial 2-cells.

We need a precise description of the process of constructing a diagram from a representation
of an element $w$ as a product of conjugates of $c_1,\dots,c_m$.
We start with describing several elementary operations applied to a given diagram $\De$ which
produce a new diagram ~$\De_1$ over the same set $\set{c_1,\dots,c_m}$.

(T1) 
{\em Contracting a trivial edge.} If $e$ is an edge with distinct endpoints and
$\la(e)=1$ then we contract $e$ into a vertex. 

(T2)
{\em Contracting a trivial $2$-cell}. 
Let $D$ be a trivial 2-cell of $\De$. Assume that either $D$ has vertex type and the boundary loop 
consists of one edge or $D$ has edge type and the boundary loop of $D$ consists of exactly 2 edges
and is labelled $y y^{-1}$, $y \in Y$. 
Then we contract $D$ to a vertex or to an edge labelled $y$, respectively.

We admit also inverse operations $\rm{(T1)}^{-1}$ and $\rm{(T2)}^{-1}$, 
{\em introducing a trivial edge} and {\em introducing a trivial 2-cell},
respectively. We call operations (T1), (T2) and their inverses {\em trivial transformations}.
We introduce another two types of elementary operations which we call {\em elementary reductions}.

(R1)
{\em Folding boundary edges.} Assume that two distinct boundary directed edges ~$e_1$ and ~$e_2$ of $\De$ have 
the same label, a common initial vertex $\nu$ and distinct terminal vertices.
We assume furthermore that the path $(e_1 e_2)^{\pm1}$ occurs in the boundary loop of $\De$.
(This is not always true in the case when $\nu$ is the base vertex of $\De$.)
Then we perform folding of $e_1$ and ~$e_2$ into a single edge.

(R2)
{\em Removal of a leaf edge.} Let $e$ be an edge of $\De$ with an endpoint $\nu$
of valence 1. We assume that $\nu$ is not the base vertex of $\De$. Then we remove $e$
and its endpoint $\nu$ from $\De$. Note that $e$ does not belong to the boundary of a nontrivial 2-cell of $\De$
since boundary labels of nontrivial 2-cells are assumed to be cyclically reduced. 
Therefore, this operation
either reduces cancellation in the boundary label of $\De$ or changes a trivial 2-cell of edge type
to a trivial 2-cell of vertex type.

By definition, for all the operations introduced,
the base vertex and the boundary loop of ~$\De_1$ are inherited in the natural way from
those of $\De$. 

Observe that (T1) and (T2) do not change the boundary label $w$ of $\De$, (R1) reduces a cancellation in $w$
and (R2) either does not change $w$ or reduces a cancellation in $w$.

It is easy to see that any cancellation in the boundary label of $\De$ can be reduced by either (R1) or (R2)
after a sequence of trivial transformations $\rm{(T1)}^{\pm1}$ and $\rm{(T2)}^{\pm1}$.

\begin{definition} \label{def:diagram-folding}
We say that a diagram $\De_2$ is obtained by {\em folding} from a diagram $\De_1$ if it is the result
of application of a sequence of trivial transformations and elementary reductions 
so that the boundary label of $\De_2$
is freely reduced. We say also that $\De_1$ is obtained by {\em unfolding} from ~$\De_2$.

For technical convenience, we assume that after folding, diagram $\De_2$ always undergoes the following
{\em tightening} procedure: First, contract any trivial 2-cell whenever it can be contracted by
using (T2) and any sequence of operations $\rm{(T1)}^{\pm1}$. 
Second, contract all trivial edges whenever possible.
\end{definition}

%It is immediate from the definition that if $\De_2$ is obtained by folding
%from $\De_1$ then the boundary label of $\De_2$ is the freely reduced form of the boundary label of $\De_1$.

Below we use the following properties of {\em tight} diagrams as in Definition \ref{def:diagram-folding}. 
The proof of the following lemma is an easy exercise and left to the reader.

\begin{lemma} \label{lm:normalized-diagram}
Let $\De$ be a tight diagram. Then the following assertions are true.
\begin{enumerate}
\item
All trivial $2$-cells of $\De$ have edge type. If a nontrivial edge $e$ occurs in the boundary loop
of such a $2$-cell $D$ then both $e$ and $e^{-1}$ occur in the boundary loop of $D$ (so the union of ~$D$ and 
$e$ is an annulus). In particular, no non-trivial boundary edge of $\De$ belongs to 
the boundary of a trivial 2-cell.
\item 
Any simple loop in the 1-skeleton $\De^{(1)}$ of $\De$ bounds a subdiagram which has at least one
nontrivial 2-cell.
\end{enumerate}
\end{lemma}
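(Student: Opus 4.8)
The plan is to make systematic use of the fact that a tight diagram admits no further simplification: by Definition~\ref{def:diagram-folding} it contains no trivial edge with distinct endpoints, no trivial $2$-cell collapsible by (T2) together with auxiliary operations $\mathrm{(T1)}^{\pm1}$, and --- being obtained by a folding --- no elementary reduction (R1) or (R2). Each clause of the lemma I would prove by showing that its negation would make one of these moves available, contradicting tightness.

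For part (i) I would proceed as follows. Since no empty-labelled edge has distinct endpoints, every empty-labelled edge of $\De$ is a loop. Suppose $D$ were a trivial $2$-cell of \emph{vertex} type: its boundary walk reads the empty word, hence runs through loop edges only, and by connectedness these are all based at one vertex. Taking the loop of $\partial D$ innermost with respect to $D$ cuts off a planar sub-disk whose interior meets $\De^{(1)}$ only in empty-labelled (hence loop) edges; contracting these and then applying (T2) would collapse $D$, contradicting tightness. So every trivial $2$-cell is of edge type, with boundary label $y y^{-1}$, $y\neq1$, carried by exactly two nontrivial positions on $\partial D$. If $e$ is a nontrivial edge on $\partial D$, say $\la(e)=y$, and the other position were carried by an edge $\bar e \neq e^{-1}$, then $e$ and $\bar e^{-1}$ would be distinct edges with the same label; the planar sub-disk that $e$ (together with the trivial loops of $\partial D$) cuts off from $D$ would then have reduced boundary label of shape $y\cdots y$, which is impossible, or else $e$ and $\bar e$ lie on $\bd\De$ and fold by (R1) --- again a contradiction. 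Hence $e$ and $e^{-1}$ both appear on $\partial D$, and since the label is $y y^{-1}$ there are no further nontrivial edges, so $D\cup e$ is an annulus. Finally, if such a nontrivial $e$ lay on $\bd\De$, then $e^{-1}$ would too, forcing $e$ to be a two-sided edge of the planar complex $\De$, which cannot happen.

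For part (ii) I would take a simple loop $\gamma$; planarity and simple connectedness of $\De$ give a subdiagram $\De_\gamma$ bounded by $\gamma$, which I may choose with $\De_\gamma$ minimal. Assuming $\De_\gamma$ has no nontrivial $2$-cell, part (i) says all its $2$-cells are trivial of edge type, so the boundary label of $\De_\gamma$ is a product of conjugates of trivial labels, i.e.\ equals $1$ in $F_Y$; thus $\la(\gamma)$ freely reduces and, being the label of a simple (hence non-backtracking) loop, must have two cyclically consecutive edges $f,f'$ with $\la(f')=\la(f)^{-1}$ and $f'\neq f^{-1}$. Minimality of $\De_\gamma$ rules out the degenerate short case (a bigon whose enclosed $2$-cell would, by part~(i), be a forbidden trivial edge-type cell between parallel equally-labelled edges); otherwise $f^{-1}$ and $f'$ are distinct edges of equal label sharing their initial vertex on $\bd\De_\gamma$ and can be folded by (R1), contradicting tightness. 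Hence $\De_\gamma$ already contains a nontrivial $2$-cell.

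I expect the main obstacle to be the planarity bookkeeping in part (i): making precise the ``innermost loop'' collapse for vertex-type cells and the exclusion of two distinct equally-labelled edges on the boundary of an edge-type cell. These are clear from a picture but require a careful case analysis of how the boundary walk of a $2$-cell can sit inside the planar complex --- bridges, loop edges, and edges traversed twice --- after which part (ii) follows routinely.
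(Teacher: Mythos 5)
The paper itself gives no proof of this lemma (it is explicitly ``left to the reader''), so there is nothing to compare your argument against; judged on its own, your proposal has the right overall strategy --- show that the negation of each clause would make a tightening move available --- but several individual steps do not hold up. In part (i), for a vertex-type cell $D$ you correctly observe that all edges of $\partial D$ are empty-labelled loops based at a single vertex, but the assertion that the innermost loop of $\partial D$ cuts off a disk meeting $\De^{(1)}$ only in empty-labelled edges is false in general: that disk can contain an arbitrary subdiagram of $\De$, including nontrivial 2-cells. Moreover ``contracting these'' loop edges is not a legal move, since (T1) requires distinct endpoints; the actual content of this step is to show that suitable $(\mathrm{T1})^{-1}$ vertex-splittings convert the loops of $\partial D$ into trivial edges with distinct endpoints, which can then be contracted until $\partial D$ is a single edge and (T2) applies --- and that is precisely what is missing. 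In the edge-type case, the claim that a sub-disk ``would have reduced boundary label of shape $y\cdots y$, which is impossible'' is unjustified: nothing constrains the boundary label of a subdiagram, and two distinct interior edges with the same label are not in themselves forbidden in a tight diagram. Here too one must argue that the configuration $\partial D = e\,\al\,\bar e\,\be$ with $\bar e \ne e^{-1}$ (and $\al$, $\be$ walks through empty loops) is collapsible by (T2) together with $(\mathrm{T1})^{\pm1}$.

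In part (ii) the appeal to (R1) is invalid: (R1) applies only to a pair of edges occurring consecutively in the boundary loop of $\De$, whereas your simple loop $\gamma$ is in general interior, and tightness says nothing about interior foldable pairs. (You also omit the case where $\gamma$ is a single empty-labelled loop, for which $\la(\gamma)$ is the empty word and no pair $f,f'$ exists.) The statement does follow from part (i), but by a different mechanism: if $f$ is a nontrivially labelled edge of $\gamma$ and the 2-cell $D$ on the inner side of $f$ is trivial, then by (i) both $f$ and $f^{-1}$ lie on $\partial D$, so the connected set $D$ lies on both sides of $f$, hence both inside and outside the Jordan curve $\gamma$ --- impossible; and if every edge of $\gamma$ is an empty loop, one descends through the inner boundary circles of the annuli $D\cup e$ furnished by (i) until a face bounded entirely by empty loops, i.e.\ a vertex-type trivial cell, is forced. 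I would rework both parts along these lines.
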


With a formal product 
$$
  v = u_1^{-1} c_1 u_1 \dots u_m^{-1} c_m u_m
$$ 
we associate a {\em rose diagram} $\De_0$ over the set $\set{c_1,\dots,c_m}$ 
labelled with (perhaps, not reduced) word $v$, see Fig.\ ~\ref{fig:rose}.
Using folding we can produce a new diagram $\De$ whose boundary label is the freely reduced from of ~$v$. 
(Existence of such a diagram $\De$ is essentially the main part of the van Kampen lemma.)

\begin{figure}[h]% 
\begin{picture}(0,0)%
\includegraphics{fig8.pstex}%
\end{picture}%
\setlength{\unitlength}{0.00087489in}%
\begingroup\makeatletter\ifx\SetFigFont\undefined%
\gdef\SetFigFont#1#2#3#4#5{%
  \reset@font\fontsize{#1}{#2pt}%
  \fontfamily{#3}\fontseries{#4}\fontshape{#5}%
  \selectfont}%
\fi\endgroup%
{\renewcommand{\dashlinestretch}{30}%
\begin{picture}(3289,1696)(0,-10)%
\put(1590,0){\makebox(0,0)[lb]{\smash{{\SetFigFont{12}{14.4}{\rmdefault}{\mddefault}{\updefault}$\nu_0$}}}}
\put(1188,990){\makebox(0,0)[lb]{\smash{{\SetFigFont{12}{14.4}{\rmdefault}{\mddefault}{\updefault}$\ldots$}}}}
\put(15,780){\makebox(0,0)[lb]{\smash{{\SetFigFont{12}{14.4}{\rmdefault}{\mddefault}{\updefault}$c_m$}}}}
\put(1505,1320){\makebox(0,0)[lb]{\smash{{\SetFigFont{12}{14.4}{\rmdefault}{\mddefault}{\updefault}$c_2$}}}}
\put(3095,735){\makebox(0,0)[lb]{\smash{{\SetFigFont{12}{14.4}{\rmdefault}{\mddefault}{\updefault}$c_1$}}}}
\put(1050,285){\makebox(0,0)[lb]{\smash{{\SetFigFont{12}{14.4}{\rmdefault}{\mddefault}{\updefault}$u_m$}}}}
\put(920,600){\makebox(0,0)[lb]{\smash{{\SetFigFont{12}{14.4}{\rmdefault}{\mddefault}{\updefault}$\nu_m$}}}}
\put(1905,890){\makebox(0,0)[lb]{\smash{{\SetFigFont{12}{14.4}{\rmdefault}{\mddefault}{\updefault}$\nu_2$}}}}
\put(1815,600){\makebox(0,0)[lb]{\smash{{\SetFigFont{12}{14.4}{\rmdefault}{\mddefault}{\updefault}$u_2$}}}}
\put(2130,285){\makebox(0,0)[lb]{\smash{{\SetFigFont{12}{14.4}{\rmdefault}{\mddefault}{\updefault}$u_1$}}}}
\put(2400,420){\makebox(0,0)[lb]{\smash{{\SetFigFont{12}{14.4}{\rmdefault}{\mddefault}{\updefault}$\nu_1$}}}}
\end{picture}
}
\caption{}\label{fig:rose}
\end{figure}

We can consider also an inverse process: given a diagram $\De$ over a set $\set{c_1,\dots,c_m}$ 
with boundary label ~$w$ 
we can unfold $\De$ to get a rose diagram $\De_0$ and its associated factorization 
$$
  w = u_1^{-1} c_{\si(1)}' u_1 \dots u_m^{-1} c_{\si(m)}' u_m
$$
of $w$ in the free group $F_Y$
where $\si$ is a permutation on the set $\set{1,\dots,m}$ and $c_i'$ is a cyclic shift of ~$c_i$.
The following proposition describes the relationship between two factorizations obtained in this way.

\begin{proposition} \label{prop:diagram-braid-autos}
Let $\De$ be a diagram with labeling function over ~$Y$, with boundary label ~$w$.
Let $\De_1$ and $\De_2$ be two rose diagrams obtained from $\De$ by unfolding and let
$$
  w = u_1^{-1} c_1 u_1 \dots u_m^{-1} c_m u_m = 
      v_{\si(1)}^{-1} c_{\si(1)}' v_{\si(1)} \dots v_{\si(m)}^{-1} c_{\si(m)}' v_{\si(m)}
$$
be the associated factorizations of $w$ in $F_Y$ where $\si$ is a permutation on the set $\set{1,\dots,m}$ and
$c_i'$ is a cyclic shift of $c_i$.
Then there is an $F_Y$-automorphism $\phi$ of $F_{Y \cup \set{z_1,\dots,z_m}}$ such that 
$$
  (z_1^{-1} c_1 z_1 \dots z_m^{-1} c_m z_m)^\phi = 
    z_{\si(1)}^{-1} c_{\si(1)}' z_{\si(1)} \dots z_{\si(m)}^{-1} c_{\si(m)}' z_{\si(m)}
$$
and the following diagram is commutative
$$
  \xymatrix @C-2em{
    F_{Y \cup \set{z_1,\dots,z_m}} \ar[rr]^{\phi} \ar[dr]_{\al} && 
    F_{Y \cup \set{z_1,\dots,z_m}} \ar[dl]^{\be} \\
    & F_Y
  }
$$
where $\al$ and $\be$ are the corresponding evaluations of variables $z_i$:
$$
  z_i^\al = u_i, \quad z_i^\be = v_i \quad (i=1,\dots,m).
$$
\end{proposition}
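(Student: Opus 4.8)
The plan is to read off the factorization of $w$ along a sequence of elementary diagram operations joining $\Delta_1$ to $\Delta_2$, realising each individual step by an $F_Y$-automorphism of $F_{Y\cup\set{z_1,\dots,z_m}}$ of a very simple form. By Definition~\ref{def:diagram-folding}, folding --- and hence unfolding --- is a composition of the trivial transformations (T1), (T2), their inverses, and the elementary reductions (R1), (R2) (tightening being itself such a composition). Since $\Delta_1$ and $\Delta_2$ both fold to $\Delta$, concatenating a folding of $\Delta_1$ with the reverse of a folding of $\Delta_2$ produces a finite chain
\[
  \Delta_1 = \Gamma_0 \longrightarrow \Gamma_1 \longrightarrow \dots \longrightarrow \Gamma_N = \Delta_2
\]
in which every arrow is one of these moves or its inverse. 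It is therefore enough to produce, for each single move $\Gamma_k\to\Gamma_{k+1}$, an $F_Y$-automorphism $\phi_k$ realising the induced change of factorization, and then to take $\phi=\phi_1\phi_2\cdots\phi_N$.

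To make sense of ``the factorization'' for the intermediate diagrams $\Gamma_k$, which are not roses, I would equip each diagram in the chain with a \emph{marking}: a spanning tree of its $1$-skeleton together with a chosen vertex on the boundary of each nontrivial $2$-cell $D_i$. Reading the boundary label along the tree paths from $\nu_0$ to the marked vertices gives a factorization $w = u_1^{-1}c_1'u_1\cdots u_m^{-1}c_m'u_m$ with each $c_i'$ a cyclic shift of $c_i$, exactly as for a rose (which is the special case in which the $1$-skeleton is the union of the stems and the petals; tightness, guaranteed by Lemma~\ref{lm:normalized-diagram}, keeps the structure under control). For every elementary move $\Gamma_k\to\Gamma_{k+1}$ one specifies an induced marking on $\Gamma_{k+1}$ and checks that the factorization changes in one of a few ways, each realised by an explicit $F_Y$-automorphism of $F_{Y\cup\set{z_1,\dots,z_m}}$ carrying the model word $z_1^{-1}c_1z_1\cdots z_m^{-1}c_mz_m$ to the model word of the new factorization: no change, realised by the identity (for a move disjoint from the stems and the petals); a one-letter change $u_i\mapsto y^{\pm1}u_i$ or $u_i\mapsto u_iy^{\pm1}$, accompanied possibly by a one-letter cyclic shift of $c_i'$ (for a fold, a leaf removal, or a trivial-cell contraction meeting a single stem or a single petal), realised by $z_i\mapsto y^{\mp1}z_i$ or $z_i\mapsto z_iy^{\mp1}$; and a reordering of two neighbouring petals together with a conjugation-type change of one conjugator by the relator carried by the other, realised by a Nielsen-type substitution among the $z_i$ composed with a transposition. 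In each case a short computation confirms that $\phi_k$ takes the old model word to the new one and that the triangle with the two evaluations $z_i\mapsto u_i^{(k)}$ and $z_i\mapsto u_i^{(k+1)}$ commutes; the latter is forced by the fact that applying either evaluation to the corresponding model word returns the common boundary label of $\Gamma_k$ and $\Gamma_{k+1}$.

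Finally, since each $\phi_k$ fixes $F_Y$ pointwise, so does $\phi=\phi_1\cdots\phi_N$; by construction $\phi$ sends the model word of $\Delta_1$ to that of $\Delta_2$, namely $z_{\si(1)}^{-1}c_{\si(1)}'z_{\si(1)}\cdots z_{\si(m)}^{-1}c_{\si(m)}'z_{\si(m)}$ for the permutation $\si$ and the cyclic shifts accumulated along the chain, and composing the per-step triangles gives $\al=\phi\be$. The main obstacle is the bookkeeping of the middle paragraph: defining a canonical induced marking for each of (T1), (T2), (R1), (R2) and its inverse so that the effect on the factorization really lies in the short list above, and in particular dealing with the moves performed at or adjacent to the base vertex $\nu_0$, where the prescribed boundary loop interacts with the marking, and with the moves that permute the petals, which account for both the permutation $\si$ and the conjugation-type changes of the conjugators.
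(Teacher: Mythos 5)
Your route is genuinely different from the paper's. You localize the problem: decompose the passage $\De_1\to\De\to\De_2$ into single elementary moves, equip every intermediate diagram with a marking (spanning tree plus marked vertices on the $2$-cells), and realize the change of factorization at each move by an explicit elementary automorphism, composing at the end. The paper instead argues globally and all at once: the composite unfolding induces a label-preserving homotopy equivalence $\psi$ between the $1$-skeletons of $\De_1$ and $\De_2$ fixing the base vertices, so each conjugating path satisfies $\psi(z_i^{-1}d_iz_i)\simeq w_i^{-1}d_i'w_i$; since the loops $z_j'^{-1}d_j'z_j'$ generate $\pi_1(\De_2^{(1)},\nu_0')$, one writes $z_i'^{-1}w_i$ as a word $f_i$ in them and sets $\phi(z_i)=g_iz_if_i(z_1^{-1}c_1'z_1,\dots,z_m^{-1}c_m'z_m)$, where $g_i$ accounts for the cyclic shift $c_i\to c_i'$; commutativity of the triangle is then automatic because $\psi$ preserves labels. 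The paper's argument buys a complete avoidance of the case analysis; yours, if completed, would buy an explicit presentation of $\phi$ as a product of Nielsen-type and braid-type generators, which is more information than the proposition asks for.

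The cost is that the middle paragraph of your proposal, which you correctly identify as the main obstacle, is where the actual content lies, and your claimed short list of per-move effects is not obviously adequate. A single fold (R1) can identify two edges both of which lie in the spanning tree, forcing a non-local re-choice of the tree that reroutes the paths to several marked vertices simultaneously and can alter the order in which the $2$-cells are first met along the boundary; the induced change of factorization is then a composite of several braid moves and conjugator rewritings, not one of the three atomic shapes you list. Each such effect is still realizable by an automorphism (precisely because the new conjugators are expressible in terms of the old generators of $\pi_1$ of the $1$-skeleton), so the approach does not fail, but establishing this amounts to reproving, move by move, the single $\pi_1$ fact the paper invokes once. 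You should either carry out that case analysis in full (including the moves at the base vertex and the well-definedness of the cell ordering for non-rose diagrams) or replace the middle paragraph by the global homotopy-equivalence argument.
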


\begin{proof}
Let $\nu_0$ be the base vertex of $\De_1$ and $\nu_1,\dots,\nu_m$ be vertices where disks
with boundary labels ~$c_i$ are attached to the other part of $\De_1$ (shown in Fig.\ ~\ref{fig:rose}). 
Let $z_i$ be the simple path between ~$\nu_i$ and ~$\nu_0$ and $d_i$ the boundary loop of the 
corresponding 2-cell at $\nu_i$ (which is labelled ~$c_i$).
We keep similar notations $\nu_i'$, $z_i'$ and $d_i'$ for $\De_2$. 

Unfolding transformations $\De \to \De_i$ induce
a homotopy equivalence $\psi$ between 1-skeletons $\De_1^{(1)}$ and $\De_2^{(1)}$ 
such that $\psi(\nu_0)=\psi(\nu_0')$. 
In particular, we have the induced isomorphism $\pi_1(\De_1^{(1)},\nu_0) \to \pi_1(\De_2^{(1)},\nu_0')$
of fundamental groups.
Moreover, from the construction of elementary operations it is easy to 
see that $\psi$ satisfies the following conditions:
\begin{enumerate}
\item 
For any loop $p$ at $\nu_0$, the image $\psi(p)$ is a loop at $\nu_0'$ labelled with 
a word representing the same element of $F_Y$.
\item
$\psi(z_i^{-1} d_i z_i)$ has the form $w_i^{-1} d_i' w_i$ for some $w_i$ 
up to homotopy in $\De_2^{(1)}$ rel $\nu_0'$.
\item
The image $\psi(\ell)$ of 
the boundary loop $\ell$ of $\De_1$ is homotopic in $\De_2^{(1)}$ rel $\nu_0'$
to the boundary loop of $\De_2$.
\end{enumerate}

The path $z_i'^{-1} w_i$ is a loop at $\nu_0'$, so it has an expression (up to homotopy rel $\nu_0'$)
$$
  z_i'^{-1} w_i = f_i (z_1'^{-1} d_1' z_1', \dots, z_m'^{-1} d_m' z_m'))
$$
in terms of the generators $z_i'^{-1} d_i' z_i'$ of $\pi_1(\De_2,\nu_0')$. We define formally a
homomorphism $\hat\psi: F_{\set{d_1,\dots,d_m,z_1,\dots,z_m}} \to F_{\set{d_1',\dots,d_m',z_1',\dots,z_m'}}$
by 
$$
  \hat\psi(d_i) = d_i' \quad\text{and}\quad \hat\psi(z_i) = z_i' f_i.
$$
The definition implies that for any loop $p$ at $\nu_0$, we have $\psi(p) = \hat\psi(p)$ up to homotopy
rel ~$\nu_0'$. From this we conclude that 
all $z_i'^{-1} d_i' z_i'$ belong to the image of $\hat\psi$ and hence $\hat\psi$ is in fact an isomorphism.
From (iii) we get 
$$
  z_1^{-1} d_1 z_1 \dots z_m^{-1} d_m z_m  \xrightarrow{\hat\psi}
    z_{\si(1)}'^{-1} d_{\si(1)}' z_{\si(1)}' \dots z_{\si(m)}'^{-1} d_{\si(m)}' z_{\si(m)}'
$$

Now we adjust $\hat\psi$ to get the required $\phi$. 
By (i) and (ii), we have
$$
  \la(z_i) = g_i \la(w_i) \quad\text{for some } g_i \in F_Y \text{ with } g_i^{-1} c_i g_i = c_i'
$$
We define $\phi$ by
$$
  \phi(z_i) = g_i z_i f_i (z_1^{-1} c_1' z_1, \dots, z_m^{-1} c_m' z_m))
$$
It is easy to see that $\phi$ satisfies all the required conditions.
%Since $\psi$ induces an isomorphism $\pi_1(\De_1,v_0) \to \pi_1(\De_2,v_0')$
%all $z_i'$ have preimages under $\psi$ up to homotopy and thus $\psi$ induces
%an isomorphism 
%$F_{\set{d_1,\dots,d_m,z_1,\dots,z_m}} \to F_{\set{d_1',\dots,d_m',z_1',\dots,z_m'}}$.
%To get the required $\phi$ we identify $d_i'$ and $z_i'$ with $d_i$ and $z_i$ and take the
%induced automorphism of $F_{Y\cup \set{z_1,\dots,z_m}}$.
\end{proof}

To formulate the main result of the section, we need two more definitions. Let $e$ be a nontrivial edge
of a diagram $\De$.
We call $e$ a {\em tree edge} if both $e$ and $e^{-1}$ occur in the boundary loop
of $\De$ (or, in other words, $\De$ splits into two disjoint subdiagrams after removal of $e$).
We call $e$ {\em tubular} if $e$ and $e^{-1}$ occur in the boundary loop of a trivial 2-cell of $\De$.

\begin{proposition} \label{prop:diagram-unfolding}
Let $w,c_1,c_2,\dots,c_m$ be cyclically reduced elements of a free group $F_Y$. 
%Let $u_1,u_2,\dots,u_m \in F_{A \cup Z}$ and 
%\begin{equation} \label{eqn:w-factoring}
%  w = u_1^{-1} c_1 u_1 \dots u_m^{-1} c_m u_m.
%\end{equation}
Let ~$\De$ be a diagram with boundary label $w$ over the set 
$\set{c_1,\dots,c_m}$
%folded from the rose diagram associated with factorization \eqref{eqn:w-factoring}. 
%
and let $N$ denote the total length of labels of tree and tubular edges of $\De$.
Assume that $\De$ is tight as in Definition ~\ref{def:diagram-folding}.

Then there is rose diagram obtained from $\De$ by unfolding such that for the associated factorization
$$
  w = v_1^{-1} c_{\si(1)}' v_1 \dots v_m^{-1} c_{\si(m)}' v_m
$$
where $\si$ is a permutation on the indices $\set{1,\dots,m}$ and $c_i'$ is a cyclic shift of $c_i$,
the following is true:
\begin{enumerate}
\item 
$|v_i|\le \sum_{i=1}^m |c_i|+2N$ for all $i$.
\item
for any increasing sequence $1\le t_1 < t_2 < \dots < t_k \le m$ of indices $t_i$,
$$
  |v_{t_1}^{-1} c_{\si(t_1)}' v_{t_1} \dots v_{t_k}^{-1} c_{\si(t_k)}' v_{t_k}|\le \sum_{i=1}^m |c_i| + 2N.
$$
\end{enumerate}
\end{proposition}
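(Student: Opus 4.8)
The plan is to read off the required factorization of $w$ from a spanning tree of the $1$-skeleton of $\De$ chosen so as to run along the boundary $\bd\De$ as closely as possible, and then to verify that this factorization is the one associated to an unfolding of $\De$. The bounds (i) and (ii) then come out of a length count on boundary walks of $\De$ and of its subdiagrams, the crucial inputs being planarity and the hypothesis that $w$ is cyclically reduced.

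First I would record the structure of a tight diagram with cyclically reduced boundary label. By Lemma \ref{lm:normalized-diagram} every nontrivial edge of $\De$ lies in the boundary of one or two nontrivial $2$-cells, or is a tree edge, or is tubular, with trivial $2$-cells appearing as annular tubes around tubular edges. Cyclic reducedness of $w$ forbids dead ends: no tree or tubular edge $e$ can cut off a pendant subdiagram, for then $\bd\De$ would contain $e\cdots e^{-1}$ with no intervening content and $w$ would fail to be cyclically reduced (in particular $\nu_0$ is not the far endpoint of such an edge). Contracting each nontrivial $2$-cell to a point and each trivial $2$-cell onto its tubular edge turns $\De$ into a simply connected graph, hence a tree; rooting it at $\nu_0$ organizes the $2$-cells $D_1,\dots,D_m$ into a rooted tree whose edges are exactly the tree and tubular edges of $\De$. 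Along the way I would note the elementary estimate $|w|\le\sum_i|c_i|+2N$: the walk $\bd\De$ traverses each edge lying on a nontrivial $2$-cell at most once (contributing at most $\sum_i|c_i|$ in total) and each tree or tubular edge exactly twice (contributing at most $2N$).

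Next I would build the unfolding. Pick a spanning tree $\mathcal T$ of $\De^{(1)}$ containing all tree and tubular edges and, inside each boundary cycle $\bd D_i$, all but one edge, with the omitted edge and the basepoint $\nu_i\in\bd D_i$ chosen so that the $\mathcal T$-geodesic from $\nu_0$ to any vertex runs along the same arc of each $\bd D_j$ along which $\bd\De$ runs. Rewriting the boundary loop of $\De$ over $\mathcal T$ gives a factorization $w=v_1^{-1}c'_{\sigma(1)}v_1\cdots v_m^{-1}c'_{\sigma(m)}v_m$ in $F_Y$, where $\si$ is a permutation, $c'_i$ a cyclic shift of $c_i$, and $v_i$ the $\mathcal T$-geodesic from $\nu_0$ to $\nu_i$. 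This is the factorization associated to an unfolding of $\De$: it is obtained by running the elementary operations (T1), (T2), (R1), (R2) backwards, exactly as in the construction in the proof of the van Kampen lemma in \cite[Section V.1]{Lyndon-Schupp}, and its relation to any other unfolding is the one recorded in Proposition \ref{prop:diagram-braid-autos}. For (i), a geodesic in $\mathcal T$ uses each tree or tubular edge at most once and runs along each ancestor cell boundary $\bd D_j$ at most once, so $|v_i|\le N+\sum_j|c_j|\le\sum_j|c_j|+2N$.

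The main work, and the principal obstacle, is (ii). Here the key point is planarity: for any $2$-cell $D$ the cells lying below $D$ in the rooted tree occupy a contiguous block in the boundary reading order, so in the product over an increasing subsequence $t_1<\dots<t_k$ the common prefixes of consecutive conjugators $v_{t_l}$ cancel telescopically. Tracking what survives the cancellation, the reduced subproduct is the label of the boundary walk of the subdiagram $\De_S\seq\De$ formed by the selected $2$-cells together with the tree and tubular edges and, for each cell $D_j$ separating some selected cell from $\nu_0$, the arc of $\bd D_j$ that the geodesics run along. The crucial consequence of $w$ being cyclically reduced is that $\De$ has no dead-end subdiagrams, so each such arc occurs only once in $\bd\De_S$ and each tree or tubular edge at most twice; since the selected cells and the separating cells form disjoint families, this yields $|v_{t_1}^{-1}c'_{\sigma(t_1)}v_{t_1}\cdots v_{t_k}^{-1}c'_{\sigma(t_k)}v_{t_k}|=|\bd\De_S|\le\sum_i|c_i|+2N$. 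The delicate part throughout is to make sure that the telescoping really keeps each cell's boundary inside its own length budget rather than twice it; this is exactly where the cyclic reducedness of $w$ (to exclude dead ends) and the planarity of $\De$ (to force the block structure) are both used essentially, and it is also what dictates the economical choice of $\mathcal T$ and of the basepoints $\nu_i$.
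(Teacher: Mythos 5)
Your construction and your proof of part (i) are essentially the paper's argument in different clothing: the paper first unfolds every interior edge of $\De$ to obtain a cactus-shaped diagram $\ti\De$ and reads the conjugators off terminal segments of boundary loops of an increasing chain of subdiagrams, while you read them off geodesics in a spanning tree of $\De^{(1)}$; both give $|v_i|\le N+\sum_j|c_j|$. Two small caveats: to claim your factorization is \emph{associated to an unfolding} in the sense of Definition \ref{def:diagram-folding} you must actually exhibit the sequence of operations (the paper does this explicitly), and you should first unfold interior edges shared by two nontrivial $2$-cells, since in a tight diagram such edges may exist, the $2$-cells then do not form a rooted tree, and the prescription ``all but one edge of each $\bd D_i$'' need not yield a spanning tree.

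The genuine gap is in (ii), at exactly the step you flag as delicate. The claim that ``each such arc occurs only once in $\bd\De_S$'' is false: for a non-selected separating cell $D_j$ you keep an arc of $\bd D_j$ in $\De_S$ but delete the $2$-cell itself, so that arc becomes a cut arc of $\De_S$ and the boundary walk traverses it \emph{twice}, once on each side. Cyclic reducedness of $w$ excludes pendant tree edges of $\De$, but is irrelevant here: the double traversal is created by deleting $D_j$, not by a dead end already present in $\De$. The honest count is therefore $\sum_j|c_{t_j}|+2\sum_{\text{separating }j}(\text{length of the retained arc of }\bd D_j)+2N$, and since a retained arc can have length close to $|c_j|$, this argument yields only $2\sum_i|c_i|+2N$. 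The loss is not an artifact of your choices: take $c_1=u_1u_2u_3$, $c_2=p$, $c_3=q$, $w=u_1pu_2qu_3$ with the obvious tight diagram ($N=0$, $\sum_i|c_i|=5$); every factorization read off this diagram has the $p$-term times the $q$-term of length $6$ (e.g.\ $(u_3^{-1}u_2^{-1}pu_2u_3)(u_3^{-1}qu_3)=u_3^{-1}u_2^{-1}pu_2qu_3$), so no spanning tree or unfolding achieves $\sum_i|c_i|+2N$. You should be aware that the paper's own proof of (ii) has the same problem --- its diagram $\De'$ retains, for each non-selected cell $R_i$, all arcs of $\bd R_i$ except the first one, and these are likewise traversed twice --- so you have reproduced the difficulty rather than resolved it; the bound $2\sum_i|c_i|+2N$ is what both arguments actually deliver, which still supports the polynomial bounds downstream but with worse constants.
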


\begin{proof}
%Consider the following operation. {\em Unfolding an inner edge}. 
%
Suppose that $e$ is an edge in the interior of $\De$ such that at least one 
of the endpoints of $e$ belongs to the boundary of $\De$. Then we can apply to $\De$ an unfolding
operation $\rm{(R1)}^{-1}$ replacing $e$ by two new boundary edges.

Starting from $\De$, we perform recursively unfoldings of all interior edges.
Then we contract all trivial 2-cells using (T1) and (T2).
The resulting diagram $\ti\De$ has no interior edges and no trivial 2-cells.
For convenience, we further introduce new trivial edges where needed, so that each vertex of $\ti\De$ 
gets valence at most 3 (see Fig. ~\ref{fig:unfolding})

\begin{figure}[h]% 
\begin{picture}(0,0)%
\includegraphics{fig9.pstex}%
\end{picture}%
\setlength{\unitlength}{0.00087489in}%
\begingroup\makeatletter\ifx\SetFigFont\undefined%
\gdef\SetFigFont#1#2#3#4#5{%
  \reset@font\fontsize{#1}{#2pt}%
  \fontfamily{#3}\fontseries{#4}\fontshape{#5}%
  \selectfont}%
\fi\endgroup%
{\renewcommand{\dashlinestretch}{30}%
\begin{picture}(6053,1861)(0,-10)%
\put(4575,1185){\makebox(0,0)[lb]{\smash{{\SetFigFont{12}{14.4}{\rmdefault}{\mddefault}{\updefault}$s_3$}}}}
\put(15,128){\makebox(0,0)[lb]{\smash{{\SetFigFont{12}{14.4}{\rmdefault}{\mddefault}{\updefault}$\nu_0$}}}}
\put(2805,128){\makebox(0,0)[lb]{\smash{{\SetFigFont{12}{14.4}{\rmdefault}{\mddefault}{\updefault}$\nu_0=\xi_1$}}}}
\put(3185,558){\makebox(0,0)[lb]{\smash{{\SetFigFont{12}{14.4}{\rmdefault}{\mddefault}{\updefault}$s_1$}}}}
\put(3651,592){\makebox(0,0)[lb]{\smash{{\SetFigFont{12}{14.4}{\rmdefault}{\mddefault}{\updefault}$\mu_1$}}}}
\put(4200,843){\makebox(0,0)[lb]{\smash{{\SetFigFont{12}{14.4}{\rmdefault}{\mddefault}{\updefault}$\xi_3$}}}}
\put(4060,512){\makebox(0,0)[lb]{\smash{{\SetFigFont{12}{14.4}{\rmdefault}{\mddefault}{\updefault}$R_1$}}}}
\put(4630,559){\makebox(0,0)[lb]{\smash{{\SetFigFont{12}{14.4}{\rmdefault}{\mddefault}{\updefault}$\xi_2$}}}}
\put(5082,442){\makebox(0,0)[lb]{\smash{{\SetFigFont{12}{14.4}{\rmdefault}{\mddefault}{\updefault}$s_2$}}}}
\put(4870,1090){\makebox(0,0)[lb]{\smash{{\SetFigFont{12}{14.4}{\rmdefault}{\mddefault}{\updefault}$\mu_3$}}}}
\put(5120,1463){\makebox(0,0)[lb]{\smash{{\SetFigFont{12}{14.4}{\rmdefault}{\mddefault}{\updefault}$R_2$}}}}
\put(5700,558){\makebox(0,0)[lb]{\smash{{\SetFigFont{12}{14.4}{\rmdefault}{\mddefault}{\updefault}$R_3$}}}}
\put(2897,997){\makebox(0,0)[lb]{\smash{{\SetFigFont{12}{14.4}{\rmdefault}{\mddefault}{\updefault}$\longrightarrow$}}}}
\put(4502,15){\makebox(0,0)[lb]{\smash{{\SetFigFont{12}{14.4}{\rmdefault}{\mddefault}{\updefault}$\ti\De$}}}}
\put(5300,680){\makebox(0,0)[lb]{\smash{{\SetFigFont{12}{14.4}{\rmdefault}{\mddefault}{\updefault}$\mu_2$}}}}
\end{picture}
}
\caption{}\label{fig:unfolding}
\end{figure}

We claim that $\ti\De$ has exactly $N$ tree edges. Indeed,
since $\De$ is tight, by Lemma \ref{lm:normalized-diagram}(i) any nontrivial
interior edge of $\De$ either occurs in the boundary loop of a nontrivial 2-cell or is tubular.
Then every tree edge of $\ti\De$ comes either from a tree edge of $\De$ (which is not
changed in the transformation) or from a tubular edge $e$ of $\De$ (which becomes a tree edge after 
unfolding $e$ and contracting the trivial 2-cell $D$ with $\bd D \supset e$).

Let $\nu_1$ be the base vertex of $\ti\De$ and $\ell$ be the boundary loop of $\ti\De$.
We enumerate cells $R_1,\dots, R_m$ of ~$\ti\De$ 
in the order as their boundary edges meet first in ~$\ell$. Let
$$
  \set{v_0} = \De_0 \subset \De_1 \subset \dots \subset \De_m = \ti\De
$$
be a sequence of subdiagrams of $\ti\De$ 
where each $\De_{i}$ is obtained from $\De_{i-1}$ by attaching the topological closure of $R_i$ and 
a segment $s_i$ joining $R_i$ with $\De_{i-1}$. 
We view $s_i$ as a path from a vertex $\mu_i$ in $\bd R_i$ 
to a vertex ~$\xi_i$ in $\bd \De_{i-1}$. The boundary loop of $R_i$ at $\mu_i$ is labelled with a cyclic
shift $c_{\si(i)}'$ of $c_{\si(i)}$ where $\si$ is a permutation on $\set{1,\dots,m}$. Let $r_i$ be
the terminal segment of the boundary loop of $\De_{i-1}$ starting at $\xi_i$. Denote $v_i = \la(s_i r_i)$.
Then we have
$$
  w = v_1^{-1} c_{\si(1)}' v_1 \dots v_m^{-1} c_{\si(m)}' v_m
$$
and from the construction we can easily see that the rose diagram associated with this factorization
is obtained by unfolding from $\ti\De$ successively along paths $r_m$, $r_{m-1}$, $\dots$, $r_1$
each time slicing off one 2-cell $D_i$.

Observe that $s_i r_i$ passes through every tree edge of $\ti\De$ at most twice
and through every other edge of $\ti\De$ at most once.
This implies 
$$
  |u_i|\le \sum_{i=1}^m |c_i|+2N.
$$

Let us prove (ii).
Let $1\le t_1 < t_2 < \dots < t_k \le m$ be an increasing sequence of indices ~$t_i$. 
Let $h_i$ be the component of the intersection $\bd R_i \cap \ell$ that meets first in $\ell$.
We consider a diagram $\De'$ obtained by removal from $\ti\De$ all
2-cells $R_i$ and all arcs $h_i$ with $i\ne t_j$. 
It is easy to see that the boundary label of $\De'$ is equal to 
$\prod_{i=1}^k v_{t_i}^{-1} c_{\si(t_i)}' v_{t_i}$. 
Hence
$$
  \left| \prod_{i=1}^k v_{t_i}^{-1} c_{\si(t_i)}' v_{t_i} \right| \le  \sum_{i=1}^m |c_i| + 2N
$$
\end{proof}

For the proof of Theorem \ref{thm:sol-bound}, we will use Proposition \ref{prop:diagram-unfolding}
in the special case when $w$ is a short
product of conjugates of elements $c_1,\dots,c_m$ in the sense of Definition \ref{def:non-splittable}. 
We start with the following simple observation.

\begin{lemma} \label{lm:trivial-tree-part}
Let $w,c_1,c_2,\dots,c_m$ be cyclically reduced elements of a free group $F_Y$ and
$w$ be a short product of conjugates of $c_1,\dots,c_m$. 
Then any diagram $\De$ over $\set{c_1,\dots,c_m}$ with boundary label $w$
has no tree edges.
In particular, $|w| \le \sum_{i} |c_i|$.
\end{lemma}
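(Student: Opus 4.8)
The plan is to argue by contradiction. Suppose $\De$ has a tree edge $e$ and put $h=\la(e)\in Y^{\pm1}$, so $h\neq 1$; since removing $e$ disconnects $\De$, the edge $e$ is not a loop and its two endpoints are distinct. Cutting $\De$ along $e$ produces two connected, simply connected sub-disks $\De_1,\De_2$ which share exactly one endpoint of $e$, with $e\subseteq\De_1$. Reading the boundary loop of $\De$ from the initial vertex of the unique occurrence of $e$, it has the form $e\cdot q\cdot e^{-1}\cdot r$, where $q$ is the boundary loop of $\De_2$ and $r$ the boundary loop of the part of $\De_1$ other than $e$; hence, as a cyclic word,
$$
  w \;=\; \la(\bd\De)\;=\; U\cdot h\cdot V\cdot h^{-1},\qquad U=\la(r),\quad V=\la(q).
$$
Since $w$ is freely reduced and $U$, $V$ occur in it as contiguous subwords flanked by the single letters $h^{\pm1}$, this is a bona fide decomposition of the freely reduced cyclic word $w$. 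By the van Kampen lemma \cite[proof of Theorem V.1.1]{Lyndon-Schupp} applied to the disks $\De_1,\De_2$, the word $U$ is a product of conjugates of the boundary labels of the nontrivial $2$-cells lying in $\De_1$, and $V$ of those lying in $\De_2$; these two families partition the multiset $\set{c_1,\dots,c_m}$ of all nontrivial $2$-cell labels of $\De$.

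Next I split into cases according to this partition. If both families are non-empty, then $\set{c_i}$ is \emph{properly} partitioned into $\set{c_{p_i}}$ and $\set{c_{q_i}}$ with $U$ a product of conjugates of the $c_{p_i}$'s and $V$ of the $c_{q_i}$'s; thus $w=UhVh^{-1}$ is exactly the form excluded in Definition~\ref{def:non-splittable}, contradicting the assumption that $w$ is a short product of conjugates of $c_1,\dots,c_m$. Otherwise one family is empty; say $\De_1$ contains no nontrivial $2$-cell. Then $\De_1$ is simply connected with only trivial $2$-cells, so $U=\la(r)$ freely reduces to the empty word, and being a reduced subword of $w$ it is empty. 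Hence $w=hVh^{-1}$ literally, so its first letter is $h$ and its last is $h^{-1}$: $w$ is not cyclically reduced (and if $V$ too is empty, $w=hh^{-1}$ is not even reduced), contradicting the hypothesis on $w$. The subcase in which $\De_2$ contains no nontrivial $2$-cell is symmetric ($V$ empty, $w=hh^{-1}U$ not reduced). Therefore $\De$ has no tree edge.

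For the last assertion, replace $\De$ by its tightening $\De'$ (contract trivial edges and trivial $2$-cells, as in Definition~\ref{def:diagram-folding}); since $w$ is reduced this changes neither the boundary label nor the nontrivial $2$-cells, so $\De'$ is still a diagram over $\set{c_1,\dots,c_m}$ with boundary label $w$ and, by what was just proved, has no tree edge. Let $\epsilon$ be a nontrivially-labelled directed edge occurring in $\bd\De'$. One side of $\epsilon$ is free; the other is not (otherwise both $\epsilon$ and $\epsilon^{-1}$ occur in $\bd\De'$ and $\epsilon$ is a tree edge), so it borders a $2$-cell, necessarily nontrivial by Lemma~\ref{lm:normalized-diagram}(i); in particular $\epsilon^{-1}\notin\bd\De'$ and $\epsilon$ occurs in $\bd\De'$ only once, through its unique free side. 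Sending each such $\epsilon$ to one of its positions in the boundary loop of its bordering nontrivial $2$-cell then defines an injection from the set of the $|w|$ nontrivially-labelled directed edges of $\bd\De'$ into $\bigsqcup_i\{\,\text{positions in }\bd D_i\,\}$, where $D_1,\dots,D_m$ are the nontrivial $2$-cells of $\De'$; hence $|w|\le\sum_i|\bd D_i|=\sum_i|c_i|$.

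The delicate point is the case analysis of the second paragraph: one must see precisely that a tree edge with a nontrivial $2$-cell on \emph{both} sides of the cut yields the forbidden splitting of the freely reduced cyclic word $w$ (not merely an equality in $F_Y$), whereas a tree edge with an empty side is ruled out not by ``shortness'' but by cyclic reducedness of $w$ — so both standing hypotheses on $w$ are genuinely used. Everything else is a routine unwinding of the van Kampen construction together with Lemma~\ref{lm:normalized-diagram}.
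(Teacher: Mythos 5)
Your proof is correct and follows essentially the same route as the paper's: cut along a putative tree edge to obtain the factorization $w = UhVh^{-1}$ via the van Kampen decomposition, with cyclic reducedness of $w$ ruling out the degenerate case where one side of the cut carries no nontrivial $2$-cells, so that shortness is contradicted. The length bound is likewise obtained exactly as in the paper, by passing to a tight diagram and observing via Lemma \ref{lm:normalized-diagram}(i) that every boundary edge lies on the boundary of a nontrivial $2$-cell.
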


\begin{proof}
If $e$ is a tree edge of $\De$ then the boundary loop of $\De$, up to a cyclic shift, has the form
$p e q e^{-1}$ where $p$ and $q$ are loops which bound two subdiagrams $\De_1$ and $\De_2$ of $\De$.
Since ~$w$ is cyclically reduced, the labels of $p$ and $q$ are nontrivial elements of $F_Y$ and hence
both ~$\De_1$ and ~$\De_2$ have nontrivial 2-cells. We get a
factorization of $w$ as in Definition \ref{def:non-splittable} which contradicts the assumption
that $w$ is short.

To prove the second statement, 
we take a tight diagram $\De$ 
over $\set{c_1,\dots,c_m}$ with boundary label $w$.
By the first statement and Lemma \ref{lm:normalized-diagram}(i),
any boundary edge of $\De$ belongs to the boundary of
a nontrivial 2-cell. The total length of their boundary labels is $\sum_{i} |c_i|$.
\end{proof}

\begin{corollary} \label{cor:cell-ordering}
Let $w,c_1,c_2,\dots,c_m$ be cyclically reduced elements of a free group $F_Y$ and let ~$w$ be 
a short product of conjugates of $c_i$'s. 
Then, up to re-enumeration and cyclic shifts of $c_i$'s, 
there exists a factorization 
\begin{equation*} 
   w = u_1^{-1} c_1 u_1 \dots u_m^{-1} c_m u_m.
\end{equation*}
such that $|u_i|\le \sum_{i=1}^m |c_i|$ and 
for any increasing sequence of indices $1\le t_1 < t_2 < \dots < t_k \le m$,
$$
  |u_{t_1}^{-1} c_{t_1} u_{t_1} \dots u_{t_k}^{-1} c_{t_k} u_{t_k}|\le \sum_{i=1}^m |c_i|.
$$
\end{corollary}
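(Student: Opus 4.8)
The plan is to realise $w$ by an economical Lyndon--van Kampen diagram and then read off the factorisation from Proposition \ref{prop:diagram-unfolding}. Since $w$ is a product of conjugates of $c_1,\dots,c_m$, the van Kampen lemma provides a diagram over the set $\set{c_1,\dots,c_m}$ whose boundary label is a (possibly unreduced) word representing $w$; folding it and applying the tightening procedure of Definition \ref{def:diagram-folding} produces a tight diagram over $\set{c_1,\dots,c_m}$ with boundary label the freely reduced word $w$ (recall that $w$ is cyclically reduced, hence freely reduced). Among all such tight diagrams I would fix one, call it $\De$, with the least possible number of $2$-cells; since every diagram over $\set{c_1,\dots,c_m}$ has exactly $m$ nontrivial $2$-cells, this amounts to minimising the number of trivial $2$-cells.

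The key point is that this $\De$ has no tree edges and no tubular edges, so that the quantity $N$ appearing in Proposition \ref{prop:diagram-unfolding} equals $0$. The absence of tree edges is exactly Lemma \ref{lm:trivial-tree-part}, which is where shortness of $w$ enters. For tubular edges I would argue by minimality: suppose $e$ is a tubular edge, occurring in the boundary loop of a trivial $2$-cell $D$. By Lemma \ref{lm:normalized-diagram}(i), $D$ has edge type and $D\cup e$ is an annulus whose boundary loop traverses $e$ and $e^{-1}$ (up to trivial edges). Deleting $D$ then only removes a backtrack $e e^{-1}$ from boundary loops, so it leaves the boundary label $w$ and the multiset of nontrivial $2$-cells unchanged while keeping the diagram connected, simply connected and planar; after re-tightening one obtains a tight diagram over $\set{c_1,\dots,c_m}$ with boundary label $w$ and strictly fewer $2$-cells, contradicting the choice of $\De$. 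Hence $N=0$.

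Finally I would apply Proposition \ref{prop:diagram-unfolding} to $\De$ with $N=0$. It yields a rose diagram obtained from $\De$ by unfolding, and the associated factorisation
$$
  w = v_1^{-1} c_{\si(1)}' v_1 \dots v_m^{-1} c_{\si(m)}' v_m,
$$
where $\si$ is a permutation of $\set{1,\dots,m}$ and each $c_i'$ is a cyclic shift of $c_i$, satisfies $|v_i|\le\sum_{j=1}^m|c_j|$ for all $i$, and $|v_{t_1}^{-1} c_{\si(t_1)}' v_{t_1}\dots v_{t_k}^{-1} c_{\si(t_k)}' v_{t_k}|\le\sum_{j=1}^m|c_j|$ for every increasing sequence $1\le t_1<\dots<t_k\le m$. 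Re-enumerating the coefficients so that the new $c_i$ is $c_{\si(i)}'$ and setting $u_i:=v_i$ — which is precisely what ``up to re-enumeration and cyclic shifts'' permits, and which does not change $\sum_i|c_i|$ since cyclic shifts preserve length — gives the claimed factorisation together with both length bounds. The main obstacle is the handling of tubular edges in the second paragraph; everything else is a routine combination of Lemma \ref{lm:trivial-tree-part} and Proposition \ref{prop:diagram-unfolding}, plus the bookkeeping of $\si$ and the cyclic shifts that the statement already absorbs.
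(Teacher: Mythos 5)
Your overall strategy coincides with the paper's: tree edges are excluded by Lemma \ref{lm:trivial-tree-part} (which is exactly where shortness of $w$ enters), tubular edges are removed, and then Proposition \ref{prop:diagram-unfolding} is applied with $N=0$; the final bookkeeping of $\si$ and the cyclic shifts is fine. The one step that does not work as written is your removal of a tubular edge. A tubular edge $e$ is an \emph{interior} edge of $\De$: both sides of $e$ face the trivial $2$-cell $D$ (indeed, by Lemma \ref{lm:normalized-diagram}(i) no nontrivial boundary edge of a tight diagram lies on a trivial $2$-cell). Hence $e$ does not occur in the boundary loop of $\De$, so deleting the open cell $D$ does not ``remove a backtrack $ee^{-1}$ from boundary loops''; it punches a hole in the disk underlying $\De$, so what remains is an annulus, not a connected and simply connected diagram, and your claim that the result is still a diagram over $\set{c_1,\dots,c_m}$ with boundary label $w$ is unjustified.

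The correct operation --- the one the paper performs --- is to excise the whole annulus $D\cup e$: write the boundary loop of $D$ as $p\,e\,q\,e^{-1}$ with $\la(p)=\la(q)=1$; one of the two circles, say $p$, bounds a subdiagram $\De_1$, and one replaces the subdiagram $\De_1\cup D\cup e$ by $\De_1$, gluing along the trivially labelled circles $p$ and $q$. This removes $D$ together with $e$, leaves the boundary label and the multiset of nontrivial $2$-cells unchanged, and produces a genuine planar, connected, simply connected diagram. With this substitution your minimality argument goes through (the paper simply iterates the excision until no tubular edges remain, rather than minimizing the number of $2$-cells), and the rest of your proof is the paper's.
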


\begin{proof}
Take any diagram $\De$ over $\set{c_1,\dots,c_m}$ with boundary label $w$. 
Assume that $e$ is a tubular edge of $\De$. Let $D$ be the trivial 2-cell of 
$\De$ with boundary loop $p e q e^{-1}$ where $\la(p) = \la(q) = 1$.
One of the loops $p$ or $q$ bounds a subdiagram ~$\De_1$ of $\De$.
Then we can remove the annulus $D \cup e$ from $\De$ replacing the subdiagram $\De_1 \cup D \cup e$ by $\De_1$.

Continuing this process we may assume that $\De$ has no tubular edges.
It remains to apply Lemma ~\ref{lm:trivial-tree-part} and Proposition \ref{prop:diagram-unfolding}.
\end{proof}

\begin{proof}[Proof of Theorem \ref{thm:sol-bound}.]
Assume that a quadratic equation $Q=1$ is solvable in $F_A$. Let $\psi \in \Aut_{F_A} (F_{A\cup X})$ 
be an automorphism given by 
Proposition ~\ref{prop:standard-red-bound-orientable} or 
Proposition ~\ref{prop:semistandard-red-bound-nonorientable} such that $Q^\phi$ is
conjugate to a standard orientable or semi-standard non-orientable quadratic word $R$. If $\al$
is a solution of $R=1$ then $\psi\al$ is a solution of $Q=1$ and for any $x \in \Var(Q)$,
$$
  |x^{\psi\al}| \le 4 n(Q) \max_{x \in \Var(Q)} |x^\al| + 2 c(Q).
$$
We have $n(R) \le n(Q)$ and $c(R) \le c(Q)$ and we can always assume that $x^\al = 1$ for every
$x \in \Var(Q) \sm \Var(R)$.
This implies that the statement of the theorem for $Q$ follows
from the statement for $R$. Therefore, it is sufficient to prove the theorem in the case of
a standard or semi-standard $Q$. Let
$$
  Q = [x_{1},y_{1}] [x_{2},y_{2}] \dots [x_g, y_g] 
  c_1 z_2^{-1} c_2 z_2 \dots z_{m}^{-1} c_{m} z_{m}
$$
or
$$
  Q = x_1^2 x_2^2 \dots x_k^2 [x_{k+1},x_{k+2}] \dots [x_{g-1}, x_g] 
  c_1 z_2^{-1} c_2 z_2 \dots z_{m}^{-1} c_{m} z_{m}.
$$

We can further assume that all $c_i$ are cyclically reduced. Indeed,
in case of general $c_i$'s we consider the equation $\bar Q = 1$ where
each $c_i$ is replaced with its cyclically reduced form ${\bar c}_i = u_i^{-1} c_i u_i$.
A solution of $Q=1$ can be obtained from a solution of $\bar Q=1$ by 
the substitution 
$$
  (x_i \mapsto u_1 x_i u_1^{-1}, \ y_i \mapsto u_1 y_i u_1^{-1}, \ 
  z_i \mapsto u_i z_i u_1^{-1} \text{ for all } i).
$$
It is easy to see that the statement of the theorem for $Q=1$ follows from the statement for $\bar Q = 1$.

Now consider two cases.

{\em Case\/} 1: $Q$ is standard orientable, i.e.
$$
  Q = [x_{1},y_{1}] [x_{2},y_{2}] \dots [x_g, y_g] 
  c_1 z_2^{-1} c_2 z_2 \dots z_{m}^{-1} c_{m} z_{m}.
$$
By Proposition \ref{prop:main-reduction}, there exists a solution $\be$ of the 
equation 
$$
  [x_1,y_1] [x_2,y_2] \dots [x_g,y_g] z_1^{-1} c_1 z_1 \dots z_{m}^{-1} c_{m} z_{m} = 1
$$
such that the cyclically reduced form $w$ of $(z_1^{-1} c_1 z_1 \dots z_{m}^{-1} c_{m} z_{m})^\be$
is a short product of conjugates of $c_i$'s. In particular, $|w| \le c(Q)$ by Lemma \ref{lm:trivial-tree-part}.
%
%$$
%  w = (z_1^{-1} c_1 z_1 \dots z_{m}^{-1} c_{m} z_{m})^\be
%$$ 
By Corollary \ref{cor:cell-ordering},
there exist a permutation $\si$ on $\set{1,\dots,m}$ and elements $u_1,\dots,u_{m} \in F_A$
of length $|u_i| \le 2 c(Q)$ such that
$$
  w = u_1^{-1} c_{\si(1)} u_1 \dots u_{m}^{-1} c_{\si(m)} u_{m}
$$
and for any increasing sequence of indices $1\le t_1 < t_2 < \dots < t_k \le m$,
$$
  |u_{t_1}^{-1} c_{\si(t_1)} u_{t_1} \dots u_{t_k}^{-1} c_{\si(t_k)} u_{t_k}|\le c(Q).
$$
This implies by Lemma \ref{lm:permut-coefficients} that there are elements $v_1,\dots,v_{m}$ of length
$|v_i| \le 3 c(Q)$ such that
$$
  w = v_1^{-1} c_1 v_1 \dots v_{m}^{-1} c_{m} v_{m}.
$$
By Corollary \ref{cor:quadratic-standard-cffree}, $w$ can be represented as a product of $g$ 
commutators of elements of length at most $2c(Q)$. Hence we get a solution $\ga$ of the 
equation 
\begin{equation} \label{eq:eqn-with-z1}
  [x_1,y_1] [x_2,y_2] \dots [x_g,y_g] z_1^{-1} c_1 z_1 \dots z_{m}^{-1} c_{m} z_{m} = 1
\end{equation}
such that $|x_i^\ga|, |y_i^\ga| \le 2 c(Q)$ and $|z_i^\ga| \le 3 c(Q)$. 
To get a solution of the original equation $Q=1$, we eliminate $z_1$ by applying 
to the left-hand side of \eqref{eq:eqn-with-z1} the automorphism $\psi\in\Aut (F_{A\cup X})$ defined by 
$$
  \psi = (x_i \mapsto z_{1}^{-1} x_i z_{1}, \ y_i \mapsto z_{1}^{-1} y_i z_{1} \ 
  \text{for all $i$}, \ z_j \mapsto z_j z_{1} \ \text{for $j>1$}).
$$
This gives a solution $\psi^{-1}\ga$ of $Q=1$ with $|x^{\psi^{-1}\ga}| \le 8c(Q)$ for all $x$.

{\em Case\/} 2: $Q$ is semi-standard non-orientable, i.e.
$$
  Q = x_1^2 x_2^2 \dots x_k^2 [x_{k+1},x_{k+2}] \dots [x_{g-1}, x_g] 
  c_1 z_2^{-1} c_2 z_2 \dots z_{m}^{-1} c_{m} z_{m}.
$$
Similarly to Case 1, for some $\ep_1,\dots,\ep_m \in\set{-1,1}$ 
we find a solution $\ga$ of another semi-standard equation 
$$
  x_1^2 \dots x_r^2 [x_{r+1},x_{r+2}] \dots [x_{g-1},x_g] 
  z_1^{-1} c_1^{\ep_1} z_1 \dots z_{m-1}^{-1} c_{m}^{\ep_{m}} z_{m} = 1
$$
where $|x_i^\ga| \le 2c(Q)$ and $|z_i^\ga| \le 3c(Q)$ for all $i$. 
By Lemma ~\ref{lm:semistandard-transition}, there is an automorphism $\phi\in\Aut(F_{A\cup X})$
carrying its left-hand side, up to conjugation, to a word 
$$
  Q_1 = x_1^2 \dots x_k^2 [x_{k+1},x_{k+2}] \dots [x_{g-1},x_g] 
  z_1^{-1} c_1 z_1 z_2^{-1} c_2 z_2 \dots z_{m}^{-1} c_{m} z_{m}
$$
which produces a solution $\ga_1 = \phi^{-1}\ga$ of $Q_1=1$ with 
$$
  |x^{\ga_1}| \le (8g + 12m + 2) c(Q) \quad\text{for all } x.
$$
Finally, we eliminate $z_1$ as in the orientable case which at most triples the bound, and observe
that $8g + 12m +2 \le 12 n(Q)$.
\end{proof}

\section{Bounding parametric solutions} \label{sec:parametric}

In this section we prove Theorem \ref{thm:param-sol-bound}.

As defined in Section \ref{sec:intro}, by a parametric solution of an equation $E=1$ in a free group ~$F_A$
we mean an $F_A$-homomorphism $\eta: F_{A\cup\Var(E)} \to F_{A\cup T}$ such that $E^\eta=1$ where
$T$ is a set of parameters. Here instead of the ``big'' group $F_{A\cup X}$ we consider the group
$F_{A\cup\Var(E)}$ involving only variables occurring in $Q$. 
It will be convenient to change this point of view by introducing formal sets of variables for equations.
We assume that an equation $E=1$ is 
endowed with a formal finite set of variables $V \subset X$ such that $V \supseteq \Var(E)$
(in other words, we admit now fictitious variables $x \in V$ not occurring in $E$).
A parametric solution of such an equation $(E=1,V)$ is then an
$F_A$-homomorphism $\be: F_{A\cup V} \to F_{A \cup T}$ such that $E^\be = 1$.
If $V = \Var(E)$ then we get equations and their parametric solutions in the initial sense.

Transformations of equations are no longer $F_A$-automorphisms 
(or $F_A$-endomorphisms if degenerate transformations are allowed)
of the big free group $F_{A \cup X}$ but homomorphisms $\phi: F_{A \cup V} \to F_{A \cup V_1}$
where $V,V_1 \subset X$ are finite sets of variables. Since we want $\phi$ to be ``potentially invertible''
we require that $\phi$ be a monomorphism.
Moreover, we will require that the condition given by the following definition should be satisfied.

\begin{definition} \label{def:primitive-mons}
We call an $F_A$-monomorphism $\phi: F_{A \cup V} \to F_{A \cup V_1}$ {\em primitive} if the
image of ~$\phi$ is a free factor of $F_{A \cup V_1}$.
\end{definition}

The main ingredient to the proof of Theorem \ref{thm:param-sol-bound} is the following
proposition which may be viewed as an advanced form of Proposition \ref{prop:main-reduction}.

\begin{proposition} \label{prop:main-reduction-strong}
Let $\set{c_1,\dots,c_m}$ be a finite set of cyclically reduced elements of $F_A$.
Suppose that $\eta: F_{A\cup \Var(Q) \cup Z} \to F_{A\cup T}$ 
is a parametric solution of a quadratic equation in $F_A$ of the form
$$
  Q = z_1^{-1} c_1 z_1 \dots z_{m}^{-1} c_{m} z_{m} 
$$
where $Q$ is a coefficient-free quadratic word and $Z = \set{z_1,\dots,z_m}$.

Then there is a coefficient-free quadratic word $R$ equivalent to $Q$, a finite set 
of variables $V \supseteq \Var(R)$
and a parametric solution $\th: F_{A \cup V \cup Z} \to F_{A\cup T}$ of an equation
$$
  R = z_{\si(1)}^{-1} c_{\si(1)}^{\ep_1} z_{\si(1)} \dots z_{\si(m)}^{-1} c_{\si(m)}^{\ep_{m}} z_{\si(m)}
$$
where all $\ep_i =1$ if $Q$ is orientable, $\ep_i \in \set{-1,+1}$ if $Q$ is non-orientable
and $\si$ is a permutation on $\set{1,2,\dots,m}$,
such that the following assertions are true:
\begin{enumerate}
\item
There is a primitive $F_A$-monomorphism $\phi:  F_{A\cup \Var(Q) \cup Z} \to  F_{A \cup V \cup Z}$ and 
an $F_A$-endomorphism $\om \in \End_{F_A}(F_{A\cup T})$ such that 
$$
  (Q^{-1} z_1^{-1} c_1 z_1 \dots z_{m}^{-1} c_{m} z_{m})^\phi
$$
%    \xrightarrow{\phi} 
is conjugate to
$$
  R^{-1} z_{\si(1)}^{-1} c_{\si(1)}^{\ep_1} z_{\si(1)} \dots z_{\si(m)}^{-1} c_{\si(m)}^{\ep_{m}} z_{\si(m)}
$$
and the following diagram is commutative:
$$
  \xymatrix{
    F_{A\cup \Var(Q) \cup Z} \ar[r]^\phi \ar[d]_\eta &
    F_{A \cup V \cup Z} \ar[d]^\th \\
    F_{A\cup T} &
    F_{A\cup T} \ar[l]_\om
  }
$$
\item
Let $\bar R$ denote the word obtained by removing from $R$ all variables $x$ with $x^\th=1$ and 
performing all subsequent cancellations.
Then $\bar R[\th]$ is cyclically reduced (and hence $\bar R[\th]$
is the cyclically reduced form of $R^\th$; recall that $W[\th]$ denotes the formal word
obtained after substitution in $W$ of values $x^\th$ of all variables $x$).
\item
There is a Lyndon--van Kampen diagram $\De$ with boundary label $\bar R^\th$ over the set
$\set{c_1^{\ep_1}$, $\dots$, $c_{m}^{\ep_{m}}}$ 
folded from the rose diagram associated with factorization
\begin{equation} \label{eq:R-fact}
  \bar R^\th = 
	      (z_{\si(1)}^\th)^{-1} c_{\si(1)}^{\ep_1} z_{\si(1)}^\th \dots 
	      (z_{\si(m)}^\th)^{-1} c_{\si(m)}^{\ep_{m}} z_{\si(m)}^\th
\end{equation}
such that 
any tree or tubular edge of $\De$ is labelled by a sigle parameter letter and the total
number of tree and tubular edges is less than $m$.
\end{enumerate}
\end{proposition}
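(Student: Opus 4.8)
The plan is to carry out, with full control of transformation homomorphisms, the parametric counterpart of the elimination process of Section~\ref{sec:simple-elimination}, to run it along the very same lines as the proof of Proposition~\ref{prop:main-reduction}, and then to extract the diagram $\De$ of~(iii) from the terminal factorization by the folding machinery of Section~\ref{sec:diagrams}. The only genuinely new difficulty will be controlling the tree and tubular edges of $\De$, which I address last.

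First I set up the bookkeeping. A state of the process is a coefficient-free quadratic word $R$, a finite set of variables $V\supseteq\Var(R)$, and an evaluation $\vartheta\colon F_{A\cup V}\to F_{A\cup T}$ such that $R^\vartheta$ is a product of conjugates of $c_1,\dots,c_m$ (an unsigned such product, in the non-orientable case), together with the current permutation $\si$ and signs $\ep_i$; during the process the variables $z_i$ are tracked only implicitly, as the conjugators realising $R^\vartheta$ as such a product. To each elementary move $(R,\vartheta)\to(R_1,\vartheta_1)$ I attach a primitive $F_A$-monomorphism $\phi$ between the ambient groups and an $F_A$-endomorphism $\om$ of $F_{A\cup T}$ realising the commuting square of~(i); the moves are the three of Section~\ref{sec:simple-elimination} together with the two ``non-short splitting'' moves from the proof of Proposition~\ref{prop:main-reduction}. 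The key modification is that whenever a move extracts a pivot element $w\in F_{A\cup T}$ (the cancelled piece in a cancellation reduction, or the piece $h$ in a non-short splitting) I give the new variable the value of a \emph{fresh parameter} $t\in T$ and let $\om$ send $t\mapsto w$; this is what will force the structural edges of $\De$ to be single parameter letters. Each such $\phi$ is, after an evident change of basis, the inclusion of a free factor, and a free factor of a free factor is again a free factor, so the accumulated $\phi$ stays a primitive monomorphism. The degenerate transformation is handled not by deleting a variable from the word (which would lower the genus of $R$ and break equivalence with $Q$) but by keeping it in $R$ and letting $\vartheta$ send it to $1$, which keeps every $\phi$ injective and is the reason the passage to $\bar R$ appears in the statement.

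Running the process is then a largely routine adaptation of Proposition~\ref{prop:main-reduction}: cancellation and cyclic reductions bring $\bar R[\vartheta]$ to cyclically reduced form, and then, while $\bar R^\vartheta$ fails to be short (resp.\ short unsigned), splitting moves isolate the repeated piece into the value of a single variable and the appropriate non-short move is applied; the lexicographic parameter $(|\bar R^\vartheta|,|\bar R[\vartheta]|)$ strictly decreases, so the process terminates. At the end, after possibly padding $R$ with extra variables sent to $1$ by $\vartheta$ (as in the proof of Corollary~\ref{cor:quadratic-standard-cffree}), $R$ is equivalent to $Q$, while $\bar R$, obtained by discarding the $\vartheta$-trivial variables, realises $\bar R^\vartheta$ as a short (unsigned) product of conjugates. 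Reintroducing the $z_i$ via the terminal factorization of $\bar R^\vartheta$, choosing $\si,\ep_i$ accordingly and extending $\vartheta$ to $\th$ on $Z$ by the conjugators (with $\phi$ on $Z$ built, as in Lemma~\ref{lm:permut-coefficients}, to make the factorizations match at the word level), one obtains the equation for $R$ and the parametric solution $\th$; composing the attached $\phi$'s and $\om$'s gives the commuting square of~(i), and~(ii) holds by construction.

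For~(iii), form the rose diagram $\De_0$ associated with the terminal factorization~\eqref{eq:R-fact}, fold it and tighten it (Definition~\ref{def:diagram-folding}) to a tight diagram $\De$ over $\set{c_1^{\ep_1},\dots,c_m^{\ep_m}}$ with boundary label $\bar R^\th$. Shortness of $\bar R^\th$ and Lemma~\ref{lm:trivial-tree-part} give that $\De$ has no tree edges. Each tubular edge is the core of a trivial annulus whose inner subdiagram has trivial boundary label, hence, since no single $c_i^{\ep_i}\neq1$ can bound, contains at least two of the $m$ nontrivial $2$-cells; the inner subdiagrams of distinct tubular edges are nested or disjoint, so simple connectivity and Lemma~\ref{lm:normalized-diagram} force a forest structure on the $m$ cells and therefore fewer than $m$ tubular edges in total. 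The hard part — the main obstacle of the whole proof — is to show that each of these tubular edges carries a single parameter letter: the fresh-parameter device makes the conjugator arcs $z_i^\th$ products of single parameter letters, and constant-labelled petal edges lie on nontrivial $2$-cells and so cannot be cores of trivial annuli in a tight diagram; turning this into a proof means maintaining through the entire elimination an invariant to the effect that the diagram associated with the current state has all its tree and tubular edges labelled by single parameter letters — with the splitting moves and the fresh-parameter device precisely preserving it — and transporting this invariant across moves using Proposition~\ref{prop:diagram-braid-autos} and the counting already carried out in Corollary~\ref{cor:cell-ordering}. Everything else is a lengthy but essentially mechanical adaptation of the arguments of Sections~\ref{sec:simple-elimination} and~\ref{sec:diagrams}.
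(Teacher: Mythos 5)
There is a genuine gap, and it sits at the center of your argument for (iii). You claim that the elimination can be run until $\bar R^\th$ is a \emph{short} (unsigned) product of conjugates, and then invoke Lemma \ref{lm:trivial-tree-part} to conclude that $\De$ has no tree edges. In the parametric setting this cannot be achieved. When $\bar R^\th = u\,h\,v\,h^{-1}$ is a non-short splitting in which $h$ and $h^{-1}$ are the two values of one and the same variable (Case 1 in the proof of Proposition \ref{prop:main-reduction}), the only move that removes the splitting is the degenerate substitution $x\mapsto 1$; that move is not a generalization --- the original parametric solution does not factor through the new one --- so it is forbidden here. Your fresh-parameter device applied to this configuration replaces $h$ by a letter $t$ but leaves the splitting $u\,t\,v\,t^{-1}$, and hence the corresponding tree edge of $\De$, in place; moreover once $|h|=1$ the lexicographic parameter no longer strictly decreases, so your loop ``while $\bar R^\th$ fails to be short'' does not terminate. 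This is precisely why assertion (iii) allows up to $m-1$ tree \emph{and} tubular edges labelled by single parameters: the tree edges are genuinely unavoidable. The paper accepts them, relabels each surviving tree edge by a fresh parameter, and bounds the total number of tree and tubular edges together by observing that every connected component of $\De$ minus the tree edges, and every empty-label piece produced from a tubular edge, contains at least one nontrivial $2$-cell, whence $k+r<m$ by simple connectivity. Your count, which addresses tubular edges only on the false premise that tree edges are absent, does not establish (iii).

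A second, self-acknowledged, gap: you defer the verification that every structural edge carries a single parameter letter to an ``invariant maintained through the entire elimination,'' which you do not formulate or prove. The paper does not maintain such an invariant; it performs explicit diagram surgery \emph{after} the cancellation and cyclic-reduction phase: splitting variables so that the label of a tree edge is the value of a single variable and then generalizing that value to a fresh parameter; unfolding each tubular annulus along a path to the base vertex, refolding, and generalizing the resulting arc label to a fresh parameter (with a rearrangement of coefficients via Proposition \ref{prop:diagram-braid-autos}); and finally contracting the tree vertices of $\De$, which requires transferring cancelled subwords and merging parameters $t_1t_i\mapsto t_i$. This last step, needed to ensure the tree/tubular edges are single edges rather than paths, is entirely absent from your sketch. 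The remaining bookkeeping in your proposal (treating degenerate variables via $\bar R$ rather than deleting them, accumulating primitive monomorphisms) does match the paper's strategy.
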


\begin{proof}
We consider first the case of orientable $Q$ (so all $\ep_i$ are 1).

We describe a certain transformation process.
At any moment, we will have the following data:
\begin{itemize}
\item 
A quadratic word $R^*$ of the form 
$$
  R^* = 
  R^{-1} z_{\si(1)}^{-1} c_{\si(1)} z_{\si(1)} \dots z_{\si(m)}^{-1} c_{\si(m)} z_{\si(m)}
$$
where $R$ is a coefficient-free quadratic word equivalent to $Q$;
\item
A finite set $V$ of variables such that $V \supseteq \Var(R)$.
\item
A parametric solution $\th: F_{A\cup V \cup Z} \to F_{A\cup T}$ of the equation $R^*=1$.
%\begin{equation} \label{eq:R-eqn}
%  R = z_{\si(1)}^{-1} c_{\si(1)} z_{\si(1)} \dots z_{\si(m)}^{-1} c_{\si(m)} z_{\si(m)}
%\end{equation}
% \item
% A Lyndon--van Kampen diagram $\De$ labelled $\bar R^\th$ over the set $c_1$, $\dots$, $c_{m}$
% folded from the rose diagram associated with the factorization \eqref{eq:R-fact}.
\item 
A primitive $F_A$-monomorphism $\phi: F_{A \cup \Var(Q) \cup Z} \to F_{A \cup V \cup Z}$ and an endomorphism
$\om \in \End_{F_A}(F_{A\cup T})$ satisfying (i).
\end{itemize}

We start with 
$
  R_0^* = Q^{-1} z_1^{-1} c_1 z_1 \dots z_{m}^{-1} c_{m} z_{m}
$, 
$V_0 = \Var(Q)$ and $\th_0 = \eta$ given by the hypothesis of the proposition.
For $\phi$ and $\om$, we take the identity maps.

A principal distinction from the proof of Proposition \ref{prop:main-reduction} is that we cannot
use degenerate transformations now. We can have variables $x$ with $x^\th=1$ which we 
call {\em degenerate}. Instead of $|R[\th]|$ as one of the inductive parameters, we
use $|\bar R[\th]|$ where $\bar R$ is obtained by removal of all degenerate variables from $R$ and
performing subsequent cancellations. Variables $x \in \Var(R) \sm \Var(\bar R)$
which are not degenerate are called {\em cancelled}.

Our inductive parameter now is the pair $(|R^\th|, |\bar R[\th]|)$ with 
lexicographic ordering. 
The whole transformation process consists of steps 1--5 described below. 

We start with describing several elementary transformations of triples $(R^*,V,\th)$ of the described form.
There will be two types of them. 

A {\em substitution} is given by a new set of 
variables $V_1 \supseteq V$,
an $F_A$-monomorphism $\psi: F_{A\cup V\cup Z} \to F_{A\cup V_1\cup Z}$ 
and a homomorphism $\th_1: F_{A \cup V_1 \cup Z} \to F_{A\cup T}$ such that $\th = \psi \th_1$.
The new quadratic word $R_1^*$ is defined as the cyclically reduced form of $(R^*)^\psi$.
In most cases when substitutions are defined, we change only $R$ and do not change the coefficient part 
$z_{\si(1)}^{-1} c_{\si(1)}^{\ep_1} z_{\si(1)} \dots z_{\si(m)}^{-1} c_{\si(m)}^{\ep_{m}} z_{\si(m)}$.

A {\em generalization} is given by an endomorphism $\tau\in\End_{F_A}(F_{A\cup T})$ and
a new parametric solution $\th_1$ of $R^*=1$ such that $\th = \th_1 \tau$.
In this case we get a new triple $(R_1^*,V_1,\th_1)$ where $R_1^* = R^*$ and $V_1 = V$.

Note that in both cases existence of $\phi$ and $\om$ satisfying (i) for a triple $(R^*,V,\th)$
automatically implies one for the new triple $(R_1^*,V_1,\th_1)$. So we will not care about
condition ~(i).

{\em Transferring cancelled subwords.} Application condition: 
a word of the form $x^\ep D$ occurs in ~$R$ where $x \in \Var(\bar R)$ 
and $D$ disappears in $\bar R$. We apply the substitution $\psi = (x^\ep \to x^\ep D^{-1})$ to $R$ and
do not change ~$V$ and ~$\th$. 
The transformation transfers $D$ to another location in $R$. (Observe that $x^\ep D x^{-\ep}$ cannot occur in $R$ 
since $x$ would be cancelled otherwise.)

{\em Cancellation reduction}. 
Application condition: a non-trivial cancellation in $\bar R[\th]$ occurs between 
the values of two neighboring variables $x^\ep$ and $y^\de$ ($\ep,\de=\pm1$) and
$x^\ep y^\de$ occurs also in ~$R$.
If $x \ne y$ then for some $u, v$ and $w\ne 1$,
$$
  (x^\ep)^\th = uw \quad\text{and}\quad (y^\de)^\th = w^{-1} v
$$
We introduce a new variable $z \notin V$, take $V_1 = V \cup \set{z}$ and define $\psi$ and $\th_1$ by
$$
  \psi = (x^\ep \mapsto x^\ep z, \ y^\de \mapsto z^{-1} y^\de)
$$
and
$$
  (x^\ep)^{\th_1} = u, \quad (y^\de)^{\th_1} = v, \quad z^{\th_1} = w \quad\text{and}\quad
  h^{\th_1} = h^\th \text{ for } h \ne x,y,z.
$$
The case $x=y$ is treated in a similar way (see the proof of Proposition \ref{prop:main-reduction}).

Observe that both operations do not change $R^\th$,
transferring cancelled subwords does not change also $\bar R[\th]$ and cancellation reduction
decreases the length of $\bar R[\th]$ by $2|w|$ where $w$ is the cancellable part.

{\em Step \/} 1.
Assume that $\bar R[\th]$ has a cancellation between the values of two 
neighboring variables $x^\ep$ and $y^\de$. Then a word of the form
$x^\ep D y^\de$ occurs in $R$ where $D$ disappears in ~$\bar R$. 
We transfer ~$D$ to another location 
so that $x^\ep$ and $y^\de$ become neighbors in $R$ and then apply cancellation reduction
decreasing $|\bar R[\th]|$. We repeat the procedure until 
$\bar R[\th]$ becomes freely reduced.

To make $\bar R[\th]$ cyclically reduced, we use one more transformation. 

{\em Conjugation}. Take a new variable $y \notin V$, take $V_1 = V \cup \set{y}$ and define
$\psi: F_{A \cup V \cup Z} \to F_{A \cup V_1 \cup Z}$ by
$$
  \psi = (x \mapsto y^{-1} x y \text{ for } x \in \Var(R), \ z_i \to z_i y \text{ for } i=1,\dots,m)
$$
To define $\th_1$, we take any element $u \in F_{A\cup T}$ for the value $y^{\th_1}$ of $y$ and set 
according to equality $\th = \psi \th_1$:
$$
  x^{\th_1} = u x^\th u^{-1} \text{ for } x \in \Var(R) \quad\text{and}\quad 
  z_i^{\th_1} = z_i^{\th} u, \ i=1,\dots,m.
$$
For this transformation, we have $R_1^{\th_1} = u^{-1} R^\th u$.

{\em Step \/} 2.
If $R^\th$ is not cyclically reduced then using conjugation we replace it with its 
cyclically reduced form. Then jump back to Step 1.

We can assume now that $R$ and $\th$ satisfy condition (ii).
In the rest of the proof, we show how to achieve (iii). 
We observe for the future that using conjugation we can replace $\bar R[\th]$ 
with any its cyclic shift not increasing the inductive parameter $(|R^\th|, |\bar R[\th]|)$.
We introduce yet another transformation.

{\em Splitting a variable.} Let $x \in\Var(\bar R)$.
By the definition of $\bar R$ we have $x^\th \ne 1$.
Take a new variable $y \notin V$, take $V_1 = V \cup \set{y}$ and apply to $R$
the substitution $\psi = (x \mapsto xy)$.
To define ~$\th_1$, we take any values $x^{\th_1}$ and $y^{\th_1}$ such that the product
$x^{\th_1} y^{\th_1}$ is reduced and equals ~$x^\th$. The values of all other variables
are unchanged.

Starting from now we fix any diagram $\De$ with boundary label $\bar R[\th]$ over the set $\set{c_1,\dots,c_m}$
folded from the rose diagram associated with factorization \eqref{eq:R-fact}. 
Transformations below will include also change of $\De$.

We introduce a transformation which changes the coefficient part.

{\em Rearranging coefficients.} Let $\De_0$ be a rose diagram obtained from $\De$ by unfolding, and let 
$$
  \bar R[\th] = v_{\si(1)}^{-1} c_{\si(1)} v_{\si(1)} \dots v_{\si(m)}^{-1} c_{\si(m)} v_{\si(m)}
$$
be the associated factorization of $\bar R[\th]$ into a product of conjugates of $c_i$'s.
(Note that we can always assume that a cyclic shift of $c_i$ coincides with $c_i$ in this
factorization, by performing extra unfolding operations on the rose diagram.)

By Lemma \ref{prop:diagram-braid-autos}, there 
is an automorphism $\psi \in \Aut(F_{A \cup V \cup Z})$ changing only variables in $Z$ such that
$$
  z_1^{-1} c_1 z_1 \dots z_{m}^{-1} c_{m} z_{m} \xrightarrow{\psi}
  z_{\si(1)}^{-1} c_{\si(1)} z_{\si(1)} \dots z_{\si(m)}^{-1} c_{\si(m)} z_{\si(m)}
$$
and for $\th_1 = \psi^{-1} \th$ we have $z_i^{\th_1} = v_i$. We apply the substitution $\psi$ to get a new
triple $(R^*,V,\th_1)$ where the solution $\th$ and the coefficient part in $R^*$
are only changed.

{\em Step \/} 3.
Let $e$ be a tree edge of $\De$ and let $h = \la(e) \ne 1$.
The edge $e$ divides ~$\De$ into the union 
$\De = \De_1 \cup e \cup \De_2$ of $e$ and two subdiagrams ~$\De_1$ and ~$\De_2$.
Passing to a cyclic shift of $\bar R[\th]$ if needed (which can be performed using conjugation) 
we assume that 
$$
  \bar R[\th] = u h v h^{-1}
$$ 
where occurrences of $h$ and $h^{-1}$ are the labels of ~$e$ and ~$e^{-1}$, respectively.
Splitting variables if necessary we may assume that $h$ and $h^{-1}$ are values of single variables, that is,
$$
  \bar R = U x_1 V x_2^\de, \quad 
  U^\th = u, \quad x_1^\th = h, \quad V^\th = v \quad\text{and}\quad (x_2^\de)^\th = h^{-1}.
$$
Note that $U$ and $V$ are labels of boundary loops
of subdiagrams $\De_1$ and $\De_2$. The set of 2-cells of $\De$ is partitioned into the set of 
2-cells in $\De_1$ and in $\De_2$. Let
$$
  U^\th = w_{i_1}^{-1} c_{i_1} w_{i_1} \dots w_{i_k}^{-1} c_{i_k} w_{i_k}
$$
and
$$
  V^\th = w_{i_{k+1}}^{-1} c_{i_{k+1}} w_{i_{k+1}} \dots w_{i_m}^{-1} c_{i_m} w_{i_m}
$$
where 
$$
  \set{1,2,\dots,m} = \set{i_1,\dots,i_k} \uplus \set{i_{k+1},\dots,i_m}
$$
be factorizations obtained from unfoldings of ~$\De_1$ and ~$\De_2$.
Then 
$$
  \bar R[\th] = (w_{i_1}^{-1} c_{i_1} w_{i_1}) \dots (w_{i_k}^{-1} c_{i_k} w_{i_k}) 
    (h w_{i_{k+1}}^{-1} c_{i_{k+1}} w_{i_{k+1}} h^{-1}) \dots (h w_{i_m}^{-1} c_{i_m} w_{i_m} h^{-1}).
$$
is a factorization associated to an unfolding of $\De$.
We apply first rearrangement of coefficients so that equation $R^*=1$ gets the form
$$
  R = E F \quad\text{where}\quad 
  E = z_{i_1}^{-1} c_{i_1} z_{i_1} \dots z_{i_k}^{-1} c_{i_k} z_{i_k}, \quad
  F = z_{i_{k+1}}^{-1} c_{i_{k+1}} z_{i_{k+1}} \dots z_{i_m}^{-1} c_{i_m} z_{i_m}
$$
and 
$$
  E^\th = U^\th = u, \quad F^\th = h V^\th h^{-1} = hvh^{-1}.
$$
Consider two cases.

{\em Case\/} 1: $x_1=x_2$. Since $x_1^\th = (x_2^{-\de})^\th \ne 1$ we have $\de=-1$.
%We assume that $h$ is not a parameter letter (that is, either $|h|>1$ or $h$ is a one-letter constant). 
%{\em If $h$ is a parameter letter then we skip consideration of $e$ and check for another tree edge of
%of $\De$.}

We apply a generalization transformation which replaces $h$ with a new parameter $t$.
The new parametric solution $\th_1$ is defined by
$$
  \th_1: \begin{cases} 
         x_1 \mapsto t \\
	  z_j \mapsto z_j^\th h t^{-1} & \text{for } j=i_{k+1}, \dots, i_m \\
	  y \mapsto y & \text{for } y \in \Var(R) \sm \set{x_1} \text{ and } y = z_{i_1},\dots,z_{i_k}
         \end{cases}
$$
In $\De$, we change the label of $e$ to $t$.
Note that we do not change the inductive parameter $(|R^\th|, |\bar R[\th]|)$
since $h$ is the label of a single edge $e$ and hence $|h|=1$.

After performing the transformation we start a new iteration of Step 3 checking for another tree edge of $\De$. 
In general, after splittings of variables new tree edges may appear in $\De$. 
However, the total length of labels of tree edges is not changed. Therefore, after finitely 
many iteration steps we either process all tree edges of $\De$ or come to Case 2 where the inductive parameter
decreases.

{\em Case\/} 2: $x_1 \ne x_2$. Then $x_1$ occurs either in $U$ or in $V$. Without loss of
generality we assume that $x_1$ occurs in $U$ (if $x_1$ occurs in $V$ then we can pass 
to a cyclic shift of $\bar R[\th]$ by conjugation and then come to a symmetric situation). 
Let $U = U_1 x_1^{-1} U_2$.

First we apply the substitution
$$
  \psi_1 = (x_1 \mapsto x_1 E^{-1} U_1).
$$
We get 
\begin{align*}
  R^{-1} E F
    &= x_2^{-\de} V^{-1} x_1^{-1} U_2^{-1} x_1 U_1^{-1} E F \\
    &\xrightarrow{\psi_1}
      x_2^{-\de} V^{-1} U_1^{-1} 
      E x_1^{-1} U_2^{-1} x_1 F
\end{align*}
For the new value $x_1^{\th_1}$ of $x_1$ we have
$$
  x_1^{\th_1} = (x_1 U_1^{-1} E)^\th 
    = U_2^\th
$$
Next we apply another substitution 
$$
  \psi_2 = (z_{i_j} \mapsto z_{i_j} x_1^{-1} U_2 x_1, \ j=1,\dots,k)
$$
to get
$$
      x_2^{-\de} V^{-1} U_1^{-1} 
      E x_1^{-1} U_2^{-1} x_1 F 
  \xrightarrow{\psi_2}
  x_2^{-\de} V^{-1} U_1^{-1} x_1^{-1} U_2^{-1} x_1 E F .
$$
After application of $\psi_1$ and $\psi_2$ we get a new triple $(R_1^*,V,\th_1)$ with
$R_1 = x_1^{-1} U_2 x_1 U_1 V x_2^\de$ and
$$
  R_1^{\th_1} = (U_2 U_1 V x_2^\de)^\th
$$
which implies 
$$
  |R_1^{\th_1}| < |R^\th|.
$$
We jump next to Step 1. Step 3 is finished.

At this point, we produce
a triple $(R^*,V,\th)$ satisfying (ii) and a diagram ~$\De$ 
with boundary label $\bar R[\th]$ over the set $\set{c_1,\dots,c_m}$ 
folded from the rose diagram associated with \eqref{eq:R-fact}.
The diagram $\De$ has the property that
the label of any its tree edge $e$ is a single parameter letter ~$t$. 
Moreover, there are exactly two occurrences of $t$ in $\bar R[\th]$ and both
are the values of one variable $x \in \Var(\bar R)$.

{\em Step \/} 4. Let $e$ be a tubular edge of $\De$. Let $D$ be the trivial 2-cell with
boundary loop $e p e^{-1} q$ where $p$ and $q$ are loops with empty labels and $p$ bounds
a subdiagram $\De_1$ of $\De$. Without loss of generality we assume that there are no tubular
edges in $\De_1$. 
We choose any non self-intersecting path $s$ 
joining the base vertex $\nu_0$ with the start of $e$ and unfold $\De$
along the path $se p e^{-1} s^{-1}$ as shown in Fig.~\ref{fig:tubular-unfolding} (note that ~$D$ is
contracted into an edge after this procedure). Then we perform folding of the resulting diagram 
reducing cancellation in the newly appeared copies $s_1e_1 p_1 e_1^{-1} s_1^{-1}$ and 
$s_2e_2 p_2 e_3^{-1} s_3^{-1}$ of $se p e^{-1} s^{-1}$ (Fig.~\ref{fig:tubular-unfolding}).
The resulting diagram $\De'$ is the union of two subdiagrams $\Th$ and $\De_2$ with 
$\Th \cap \De_2 = \set{\nu_0}$. The subdiagram $\Th$ is obtained from the union of $\De_1$ and 
$s_1e_1$ by adding trivial 2-cells so that the complement $\Th-\De$ consists of annuli formed
by the trivial 2-cells and tubular edges that become all edges of the path $s_1 e_1$.
The subdiagram $\De_2$ is obtained from the complement $\De-\De_1$ by contracting the path $q$ into
a vertex (to do the contraction, we introduce trivial edges where needed to remove self-intersections in $q$).

Observe that the boundary label of $\Th$ is empty and the boundary label of $\De'$ is equal to 
the boundary label of $\De$. Since both $\De$ and $\De'$ can be unfolded form a common rose
diagram ~$\De_0$ we can perform rearranging coefficients so that factorization \eqref{eq:R-fact}
is replaced by the factorization associated to $\De_0$. We replace also $\De$ with $\De'$.

Similar to Step 3 we further replace occurrences of the label of $s_1e_1$ in the values $x^\be$
of variables $x$ by a new parameter letter (only values of variables $z_i$ are changed for which
the corresponding coefficient $c_i$ is the boundary label of a 2-cell in $\De_1$).

We perform the described procedure for all tubular edges of $\De$.
After this, $\De$ becomes the union of subdiagrams $\hat\De$ and $\Th_1,\dots,\Th_r$ 
with a common vertex $\nu_0$
and having no other intersections. Each subdiagram $\Th_i$ has empty label and a single tubular edge labelled by
a parameter letter. It is not hard to see that after the whole procedure no tree edges appear in $\De$.

\begin{figure}[h]% 
\begin{picture}(0,0)%
\includegraphics{fig10.pstex}%
\end{picture}%
\setlength{\unitlength}{0.00087489in}%
\begingroup\makeatletter\ifx\SetFigFont\undefined%
\gdef\SetFigFont#1#2#3#4#5{%
  \reset@font\fontsize{#1}{#2pt}%
  \fontfamily{#3}\fontseries{#4}\fontshape{#5}%
  \selectfont}%
\fi\endgroup%
{\renewcommand{\dashlinestretch}{30}%
\begin{picture}(6366,2319)(0,-10)%
\put(864,1800){\makebox(0,0)[lb]{\smash{{\SetFigFont{12}{14.4}{\rmdefault}{\mddefault}{\updefault}$q$}}}}
\put(864,1485){\makebox(0,0)[lb]{\smash{{\SetFigFont{12}{14.4}{\rmdefault}{\mddefault}{\updefault}$p$}}}}
\put(842,1120){\makebox(0,0)[lb]{\smash{{\SetFigFont{12}{14.4}{\rmdefault}{\mddefault}{\updefault}$\De_1$}}}}
\put(924,735){\makebox(0,0)[lb]{\smash{{\SetFigFont{12}{14.4}{\rmdefault}{\mddefault}{\updefault}$e$}}}}
\put(1000,390){\makebox(0,0)[lb]{\smash{{\SetFigFont{12}{14.4}{\rmdefault}{\mddefault}{\updefault}$s$}}}}
\put(857,90){\makebox(0,0)[lb]{\smash{{\SetFigFont{12}{14.4}{\rmdefault}{\mddefault}{\updefault}$\nu_0$}}}}
\put(2372,390){\makebox(0,0)[lb]{\smash{{\SetFigFont{12}{14.4}{\rmdefault}{\mddefault}{\updefault}$\De_1$}}}}
\put(3570,1570){\makebox(0,0)[lb]{\smash{{\SetFigFont{12}{14.4}{\rmdefault}{\mddefault}{\updefault}$p_2$}}}}
\put(3241,430){\makebox(0,0)[lb]{\smash{{\SetFigFont{12}{14.4}{\rmdefault}{\mddefault}{\updefault}$s_2e_2$}}}}
\put(3797,430){\makebox(0,0)[lb]{\smash{{\SetFigFont{12}{14.4}{\rmdefault}{\mddefault}{\updefault}$s_3e_3$}}}}
\put(2920,30){\makebox(0,0)[lb]{\smash{{\SetFigFont{12}{14.4}{\rmdefault}{\mddefault}{\updefault}$s_1e_1$}}}}
\put(2154,1110){\makebox(0,0)[lb]{\smash{{\SetFigFont{12}{14.4}{\rmdefault}{\mddefault}{\updefault}$\longrightarrow$}}}}
\put(4929,1110){\makebox(0,0)[lb]{\smash{{\SetFigFont{12}{14.4}{\rmdefault}{\mddefault}{\updefault}$\longrightarrow$}}}}
\put(5560,1725){\makebox(0,0)[lb]{\smash{{\SetFigFont{12}{14.4}{\rmdefault}{\mddefault}{\updefault}$\De_2$}}}}
\put(5740,770){\makebox(0,0)[lb]{\smash{{\SetFigFont{12}{14.4}{\rmdefault}{\mddefault}{\updefault}$\Th$}}}}
\put(5810,428){\makebox(0,0)[lb]{\smash{{\SetFigFont{12}{14.4}{\rmdefault}{\mddefault}{\updefault}$\De_1$}}}}
\end{picture}
}
\caption{}\label{fig:tubular-unfolding}
\end{figure}

{\em Step \/} 5. The final procedure is 
elimination of tree vertices of $\De$, that is,
vertices that do not belong to the boundary of any 2-cell of $\De$.

Let $\nu$ be a tree vertex of $\De$. 
Since $\bar R[\th]$ is cyclically reduced, the valence of $\nu$ is at least ~2.
We assume without loss of generality that $\nu$ is distinct from the base vertex $\nu_0$ of $\De$
(otherwise using conjugation we can move $\nu_0$ to any non-tree boundary vertex of $\De$).

Let $e_1,\dots,e_d$ be all directed edges starting at $\nu$, and 
let $x_1,\dots,x_d$ be the corresponding variables, so $x_i^\th = t_i$ is a parameter letter which
is the label of $e_i$. 
Since $\nu\ne\nu_0$, all occurrences of variables ~$x_i$ in $\bar R$ are of the form 
 $\dots x_i^{-1} x_{i+1} \dots $ $(i \bmod d)$. Hence all occurrences of $x_i$ in ~$R$ have
the form $\dots x_i^{-1} D_i x_{i+1}\dots$ $(i \bmod d)$ where all $D_i$ disappear
in $\bar R$. We first apply to ~$R$ the substitution
$$
  (x_2 \mapsto D_1^{-1} x_1 x_2, \ x_3 \mapsto D_2^{-1} D_1^{-1} x_1 x_3, \ \dots, \
    x_k \mapsto D_k^{-1} \dots D_1^{-1} x_1 x_k)
$$ 
This eliminates $x_2$ from $\bar R$ (it either disappears in $R$ or becomes cancelled in $\bar R$).
For the new solution $\th_1$ we get $x_i^{\th_1} = t_1 t_i$ for $i=2,\dots,d$. 
All occurrences of parameter letters $t_i$ in the values of other variables (which can be 
only $z_j$'s) are of the form $\dots t_i^{-1} t_{i+1} \dots $ $(i \bmod d)$.
Hence we can perform a generalization replacing each $t_1 t_i$ with a single parameter $t_i$. 

After performing this operation, the length of $\bar R[\th]$ is decreased.
We get also a new diagram ~$\De_1$ which is obtained from $\De$ by contracting the edge $e_1$ into a vertex.

We repeat the procedure until we get rid off all tree vertices of $\De$.

The description of the transformation process is finished. Let $\De$ be a diagram obtained
after all steps 1--5. To prove the proposition, we 
have only to estimate the total number of tree and tubular edges of $\De$.

Let $e_1$, $\dots$, $e_k$ be all tree edges of $\De$ and let $r$ be the number of subdiagrams $\Th_i$
with empty label produced at Step 4. Each of the subdiagrams $\Th_i$ has at least one non-trivial
2-cell since otherwise it should be contracted to a vertex. The same is true for all 
connected components of the complement $\De - \cup_i e_i$.  This implies that the number
of these connected components is at most $m-r$. On the other hand, this number is 
precisely $k+1$ since each ~$e_i$ joins two components of $\De - \cup_i e_i$ and $\De$ 
is simply connected. Hence $k+r < m$.

The proof of Proposition \ref{prop:main-reduction-strong} is completed in the case when
$Q$ is orientable.
In the case of non-orientable $Q$ the only difference in the argument is that we admit 
inerses of coefficient factors. (In Case 2 at Step 3, it may happen
that $x_1$ occurs in $R$ twice with the same exponent $+1$ or $-1$. 
Then the corresponding transformation inverses ~$E$ which occurs in the coefficient part.)
\end{proof}

\begin{proof}[Proof of Theorem \ref{thm:param-sol-bound}]

We start with the case when $Q$ is a standard orientable quadratic word.

We assume that $Q$ has at least one coefficient. If $Q$ is coefficient-free
then there exists a parametric solution $\eta_0\in\Hom(F_{\Var(Q)},F_T)$ of the equation $Q=1$ such that
the value of each variable is either a parameter letter or trivial
and any ordinary solution $\al$ of $Q=1$ in {\em any} coefficient group $F_A$ may be
represented as $\phi\al\om$ where $\phi\in \Stab(Q)$ and $\om\in\Hom(F_T,F_A)$
(see Theorem 4 in Section 5 of a survey \cite{Grigorchuk-Kurchanov-1993}).
This implies that $\eta_0$ is a generalization of any other parametric parametric solution $\eta$ 
of $Q=1$ (since $\eta$ is an ordinary solution in $F_{A \cup T}$).
In this case, the statement of Theorem ~\ref{thm:param-sol-bound} holds with bound
$|x^{\eta_0}| \le 1$. 

Let $\eta$ be a parametric solution of a standard quadratic equation $Q=1$ where
$$
  Q =  [x_1, y_1] [x_2, y_2] \dots [x_g, y_g] c_1 z_2^{-1} c_2 z_2 \dots z_m^{-1} c_m z_m
$$
Similarly to the proof of Theorem \ref{thm:sol-bound} we may assume that all $c_i$ are cyclically reduced.
We adjust $Q$ and $\eta$ by introducing an extra variable $z_1$ with $z_1^\eta = 1$
and replacing $c_1$ in $Q$ with $z_1 ^{-1} c_1 z_1$.
By Proposition ~\ref{prop:main-reduction-strong} with $Q := ([x_1, y_1] \dots [x_g, y_g])^{-1}$
we find an equation of the form 
$$
  R = z_{\si(1)}^{-1} c_{\si(1)}^{} z_{\si(1)} \dots z_{\si(m)}^{-1} c_{\si(m)}^{} z_{\si(ml)},
$$
a finite set of variables $V \supseteq \Var(R)$
and a parametric solution $\th: F_{A \cup V \cup Z} \to F_{A\cup T}$ 
which satisfy conditions (i)--(iii) of that proposition.

Let a primitive $F_A$-monomorphism $\phi:  F_{A\cup \Var(Q_0) \cup Z} \to  F_{A \cup V \cup Z}$ 
and $\om \in \End_{F_A}(F_{A\cup T})$ be as in (i), that is, up to conjugation we have
\begin{equation} \label{eqn:param-transforms}
  Q
    \xrightarrow{\phi} 
  R^{-1} z_{\si(1)}^{-1} c_{\si(1)} z_{\si(1)} \dots z_{\si(m)} c_{\si(m)} z_{\si(m)}
\end{equation}
and the following diagram is commutative:
\begin{equation} \label{eq:param-diagram}
  \xymatrix{
    F_{A\cup \Var(Q_0) \cup Z} \ar[r]^\phi \ar[d]_\eta &
    F_{A \cup V \cup Z} \ar[d]^\th \\
    F_{A\cup T} &
    F_{A\cup T} \ar[l]_\om
  }
\end{equation}
Our strategy is to further transform the equation
$R = z_{\si(1)}^{-1} c_{\si(1)} z_{\si(1)} \dots z_{\si(m)}^{-1} c_{\si(m)} z_{\si(m)}$
and its parametric solution $\th$ so that we come back to the initial equation 
$Q = 1$. After that, with
a slight adjustment, $\th$ will give a desired generalization of $\eta$. 
We will find an ``ecomonic'' transformation which, together with conditions (ii) and (iii) of 
Proposition ~\ref{prop:main-reduction-strong}, will provide the required bound on
the size of the resulting generalization of $\eta$.

As in the proof of Proposition \ref{prop:main-reduction-strong}, each moment
we have the following data:
\begin{itemize}
\item 
A quadratic word $R^*$ of the form 
$$
  R^* = 
  R^{-1} z_{\si(1)}^{-1} c_{\si(1)} z_{\si(1)} \dots z_{\si(m)}^{-1} c_{\si(m)} z_{\si(m)}
$$
where $R$ is a coefficient-free quadratic word equivalent to $[x_1, y_1] \dots [x_g, y_g]$;
\item
A parametric solution $\th: F_{A\cup V \cup Z} \to F_{A\cup T}$ of the equation $R^*=1$.
\item 
A primitive $F_A$-monomorphism $\phi: F_{A \cup \Var(Q) \cup Z} \to F_{A \cup V \cup Z}$ and an endomorphism
$\om \in \End_{F_A}(F_{A\cup T})$ with \eqref{eqn:param-transforms} and \eqref{eq:param-diagram}.
\end{itemize}
Note that we do not include the set $V$ of formal variables here 
since we do not need to introduce new variables and $V$ will not change.

In a similar way, we will use two types of elementary transformations: substitutions and generalizations.
During the transformation process, condition (i) of Proposition ~\ref{prop:main-reduction-strong}
will be automatically held by construction.

We start now with the pair $(R^*,\th)$ obtained after application 
of Proposition ~\ref{prop:main-reduction-strong}.
Observe that conditions (ii) and ~(iii) imply the following bound on the total length of values $x^\th$ of 
variables in $x \in \Var(\bar R)$:
$$
  \sum_{x \in \Var(\bar R)} |x^\th| \le \frac12 c(Q) + m.
$$

{\em Step\/} 1: {\em Transforming the coefficient part.}
Let $\De$ be the diagram satisfying condition (iii) of Proposition ~\ref{prop:main-reduction-strong}.
We unfold $\De$ using Proposition \ref{prop:diagram-unfolding} and in a similar way as in the proof
of Theorem ~\ref{thm:sol-bound} we change the coefficient
part of equation $R^*=1$ and the parametric solution ~$\th$ using Proposition \ref{prop:diagram-braid-autos} 
so that the equation gets
the form 
$$
  R = z_1^{-1} c_1 z_1 \dots z_m^{-1} c_m z_m
$$
and we have
$$
  |z_i^\th| \le 3 c(Q) + 4m \quad\text{for all } i=1,2,\dots,m.
$$

{\em Step\/} 2: {\em Transforming $\bar R$ to the standard form. }
We apply the procedure described in the proof of Proposition \ref{prop:semistandard-red-bound-inverse} 
to make $\bar R$ a product of commutators. 
Since our transformation should apply to $R$ we mimic application of related
Nielsen automorphisms to $\bar R$ as application of automorphisms to $R$ in the following way.

Assume that $x^\ep y^\de$ occurs in $\bar R$ an we want
to apply to $\bar R$ a related Nielsen automorphism $\rho=(x^\ep \to x^\ep y^{-\de})$. 
There is a subword $x^\ep W y^\de$ of $R$ where $W$ is deleted in $\bar R$. In particular, $W^\th = 1$. 
Then application of an automorphism $(x^\ep \to x^\ep y^{-\de} W^{-1})$ to ~$R$
produces a new word ~$R_1$ such that ${\bar R}_1 = {\bar R}^\rho$.

At this step, 
we change the values of parametric solution $\th$ on variables of ~$x \in \Var(\bar R)$ only
(and will not change them until the final Step 4). According to
Lemma \ref{lm:cffree-reduction} we get
$$
  |x^\th| \le 2 c(Q) + 4m \quad \text{for all } x \in \Var(\bar R).
$$

{\em Step\/} 3: {\em Transforming the deleted part to the standard form.}
Recall that variables in $\Var(R) \sm \Var(\bar R)$ are divided into two types: 
{\em degenerate} variables $x$ with $x^\th=1$ and {\em cancelled} ones which cancel in $R$ after
removal of degenerate variables. 

Let
$$
  R = W_0 x_1^{\ep_1} W_1 x_2^{\ep_2} \dots W_{k-1} x_k^{\ep_k} W_k
$$
where $\bar R = x_1^{\ep_1} x_2^{\ep_2} \dots x_k^{\ep_k}$ and $W_i$ are 
deleted in $\bar R$. Using substitutions $(x_i^{\ep_i} \to x_i^{\ep_i} W_i^{-1})$
and $(x_{i+1}^{\ep_{i+1}} \to W_{i}^{-1} x_{i+1}^{\ep_{i+1}})$ we can move $W_i$'s not changing $\bar R$.
Since $\bar R$ is a product of commutators, it is easy to see that
we can collect all ~$W_i$'s together at any location between $x_i^{\ep_i}$ and $x_{i+1}^{\ep_{i+1}}$.
We reduce $R$ to the form
$$
  R = W \bar R.
$$

Let $x,y \in \Var(W)$ and $\si = (x^\ep \mapsto x^\ep y^\de) \in Aut(F_{A \cup V \cup Z})$
be a Nielsen automorphism related to $R$ (i.e.\ $(x^\ep y^{-\de})^{\pm1}$ occurs in $W$).
Application of $\si$ does not change $\bar R$ whenever ~$x$ is cancelled 
or both $x$ and $y$ are degenerate.
Observing that two cancelled variables cannot ``cross'' in $W$ and using Nielsen automorphisms 
of the above form we transform $W$ to a product of commutators 
$$
  [y_1, y_2] \dots [y_{2r-1}, y_{2r}]
$$
where at least one variable $y_j$ or $y_{j+1}$ is degenerate in each commutator $[y_i,y_{i+1}]$.

Finally, we apply a generalization to $\th$ by assigning a single parameter letter 
to each cancelled variable ~$x$.
After the transformation, all of $R$ is written as a product of commutators and we get 
$$
  |x^\th| \le 1 \quad\text{for all } x \in \Var(R) \sm \Var(\bar R)
$$

{\em Step\/} 4: {\em Eliminating $z_1$.} We do this in the same way as in the proof of Theorem \ref{thm:sol-bound}.
The resulting bound increases by $6 c(Q) + 8 m$ for $|x^\th|$ when $x \in \Var(R)$ and by
$3 c(Q) + 4m$ for $|z_i^\th|$.

We have ``almost'' produced the desired
generalization $\th$ of the initial parametric solution ~$\eta$,
with the only difference that $\th$ is formally defined 
on a larger set of variables $V$. This issue is solved by the following observation.

\begin{lemma}
Let $Q \in F_{A \cup X}$ be a standard quadratic word with at least one coefficient. 
If $\phi: F_{A\cup \Var(Q)} \to F_{A\cup V}$ is a primitive $F_A$-monomorphism and $Q^\phi$
is conjugate to $Q$ then $\im\phi = F_{A\cup \Var(Q)}$.
\end{lemma}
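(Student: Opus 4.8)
The plan is to exploit the primitivity of $\phi$ to manufacture a retraction, then transport the relation ``$Q^\phi$ is conjugate to $Q$'' along it, reducing the statement to a rigidity property of standard quadratic words that carry a coefficient. Concretely, since $\phi$ is primitive I write $F_{A\cup V}=\im\phi * L$ and let $\pi\colon F_{A\cup V}\to\im\phi$ be the associated retraction; note that $\pi$ fixes $A$ pointwise (as $A\subseteq\im\phi$), that $\Var(Q)\subseteq V$, and that $\pi$ fixes each coefficient $c_i$. I put $\psi:=\phi^{-1}\circ\pi|_{F_{A\cup\Var(Q)}}$, an $F_A$-endomorphism of $F_{A\cup\Var(Q)}$ (here $\pi$ maps $F_{A\cup\Var(Q)}$ into $\im\phi$, and $\phi^{-1}$ denotes the inverse of the isomorphism $\phi\colon F_{A\cup\Var(Q)}\xrightarrow{\ \sim\ }\im\phi$). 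Writing $Q^\phi=w^{-1}Qw$ and applying $\pi$, using $\pi(Q^\phi)=Q^\phi$ and $\pi(c_i)=c_i$, one gets $Q^\phi=\pi(w)^{-1}\,\pi(Q)\,\pi(w)$, where $\pi(Q)$ is just $Q$ with each variable $v$ replaced by $\pi(v)$; applying $\phi^{-1}$ then gives $Q=u^{-1}\,Q^\psi\,u$ with $u=\phi^{-1}(\pi(w))$. Hence $Q^\psi$ is conjugate to $Q$ inside $F_{A\cup\Var(Q)}$.

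The heart of the argument is the following rigidity statement, which is where the hypothesis ``at least one coefficient'' is used: \emph{if $Q$ is a standard quadratic word with a coefficient, then any $F_A$-endomorphism $\psi$ of $F_{A\cup\Var(Q)}$ with $Q^\psi$ conjugate to $Q$ is an automorphism}. I would prove this by the same surface/Euler-characteristic bookkeeping that underlies Proposition~\ref{prop:quadratic-standard} and Definition~\ref{def:quadratic-equivalent}: an $F_A$-endomorphism fixing $A$ with $Q^\psi\sim Q$ induces a self-map of the presentation $2$-complex of $\langle A\cup\Var(Q)\mid Q\rangle$ (equivalently of $\bar S_Q$) preserving its Euler characteristic, and a proper homomorphic image of a quadratic word can only decrease its complexity (genus, number of boundary components), while the presence of a coefficient forces the standard-coefficient tuple of $Q$ to be reproduced exactly; so $\psi$ cannot factor through a proper free factor and must be surjective, hence an automorphism since finite-rank free groups are Hopfian. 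Put differently: $Q^\psi$ is conjugate to the quadratic word $Q$, so its cyclically reduced form is $Q$ itself, and the orientability, genus and coefficient data recorded by $Q$ pin $\psi$ down up to an element of $\Stab(Q)$.

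Granting rigidity, $\psi$ is an automorphism, so $\pi|_{F_{A\cup\Var(Q)}}=\phi\circ\psi$ is an isomorphism of $F_{A\cup\Var(Q)}$ onto $\im\phi$. To finish I run the symmetric argument with the retraction $\pi_K\colon F_{A\cup V}\to F_{A\cup\Var(Q)}$ (available because $F_{A\cup\Var(Q)}$ is the free factor spanned by $A\cup\Var(Q)\subseteq A\cup V$): pushing $Q^\phi=w^{-1}Qw$ through $\pi_K$ shows $Q^\xi$ is conjugate to $Q$ for $\xi:=\pi_K\circ\phi$, so $\xi$ is an automorphism of $F_{A\cup\Var(Q)}$ by rigidity again, whence $\pi_K$ restricts to an isomorphism $\im\phi\xrightarrow{\ \sim\ }F_{A\cup\Var(Q)}$. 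Replacing $\phi$ by $\phi\circ\xi^{-1}$ (which has the same image, is still an $F_A$-monomorphism, and still satisfies $Q^\phi\sim Q$) we may assume $\pi_K\circ\phi=\mathrm{id}$, i.e.\ $\phi(v)\in v\cdot\langle\langle V\setminus\Var(Q)\rangle\rangle$ for every $v\in\Var(Q)$. A normal-form (Bass--Serre) analysis of the conjugacy $Q^\phi\sim Q$ in the free product $F_{A\cup V}=F_{A\cup\Var(Q)}*F_{V\setminus\Var(Q)}$—using that $Q$, being cyclically reduced in $F_{A\cup\Var(Q)}$ and nontrivial, fixes a unique vertex of the Bass--Serre tree—then forces all these correction terms to be trivial, i.e.\ $\im\phi=F_{A\cup\Var(Q)}$.

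The step I expect to be the real obstacle is the rigidity statement: proving that a standard quadratic word carrying a coefficient admits no proper ``self-endomorphism up to conjugacy'' seems to genuinely require the classification of compact surfaces (or an explicit Euler-characteristic count) rather than a soft argument. Once rigidity is in hand, both the retraction reduction in the first paragraph and the final normal-form cleanup are routine.
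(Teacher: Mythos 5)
Your reduction in the first paragraph is clean and correct: the retraction $\pi$ onto $\im\phi$ does produce an $F_A$-endomorphism $\psi=\phi^{-1}\circ\pi|_{F_{A\cup\Var(Q)}}$ with $Q^\psi$ conjugate to $Q$. But the argument then rests entirely on the rigidity statement, and what you offer for it is not a proof. The inference ``Euler characteristic is preserved and the coefficient tuple is reproduced, hence $\psi$ is surjective'' has no actual mechanism behind it: the fact that a proper homomorphic image of a quadratic word has smaller complexity (Proposition \ref{prop:Wicks} and the surrounding discussion) concerns substitutions $F_X\to F_X$ under which the image is again a quadratic \emph{word}; an arbitrary $F_A$-endomorphism $\psi$ does not induce any map of $\bar S_Q$ and need not send $Q$ to anything quadratic, so there is no Euler-characteristic bookkeeping to run. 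Note also that your $\psi$ is a priori neither injective nor surjective, so you really do need the full strength of the claim ``$Q^\psi\sim Q$ implies $\psi\in\Aut$'' for arbitrary $F_A$-endomorphisms. Since the case $V=\Var(Q)$ of the lemma is itself a special case of that claim, you have in effect reduced the lemma to a strictly stronger unproved assertion. The paper avoids this entirely: it first extends $\phi$ to an $F_A$-\emph{automorphism} of $F_{A\cup V}$ (possible because $\im\phi$ is a free factor of complementary corank), then applies Whitehead peak reduction (a modified Proposition I.4.24 of \cite{Lyndon-Schupp}) to the cyclic word $Q$ together with the letters of $A$; since no Whitehead automorphism strictly decreases these lengths, $\phi$ factors as a product of level Whitehead automorphisms, each of which visibly preserves the free factor $F_{A\cup\Var(Q)}$. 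If you want to keep your architecture, the rigidity statement would have to be proved by some such peak-reduction or Turner-type (test element rel $F_A$) argument, which is where all the work lies.

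A secondary gap is the final ``Bass--Serre cleanup.'' After normalizing so that $\pi_K\circ\phi=\mathrm{id}$, the conclusion that the correction terms vanish is not routine, and in fact this is exactly where the hypothesis ``at least one coefficient'' must be used a second time: for coefficient-free $Q=[x,y]$ and $\phi:x\mapsto txt^{-1},\ y\mapsto tyt^{-1}$ one has $\pi_K\circ\phi=\mathrm{id}$, $Q^\phi$ conjugate to $Q$, and $\im\phi$ a proper free factor different from $F_{A\cup\Var(Q)}$ --- so the lemma is genuinely false without coefficients even though your normalization succeeds. Your one-sentence appeal to the Bass--Serre tree does not explain how the presence of the $c_i$ (equivalently, of the subgroup $F_A$ inside $\im\phi$, interleaved with the variables in the cyclic word $Q$) blocks this phenomenon; that needs to be spelled out before the argument can be considered complete.
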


\begin{proof}
It follows from Definition \ref{def:primitive-mons} that a primitive $F_A$-monomorphism
$\phi: F_{A\cup \Var(Q)} \to F_{A\cup V}$ can be extended to an $F_A$-automorphism of 
$F_{A\cup V}$. Assume that $\phi\in \Aut_{F_A} (F_{A\cup V})$ and $Q^\phi$ is conjugate to $Q$.
We apply a slightly modified version of Proposition I.4.24 in \cite{Lyndon-Schupp} to
one cyclic word $u_1 = Q$ and the tuple of non-cyclic words $u_i$, $i\ge 2$, consisting of
all letters of $A$. 
Observe that there is no elementary Whitehead automorphism $\rho \in \Aut(F_{A\cup V})$ 
which does not increase the length of all $u_i$ and strictly decreases it for at least one $i$.
Then ~$\phi$ can be represented as a product $\rho_1 \rho_2 \dots \rho_r$ 
of elementary Whitehead automorphisms ~$\rho_i$ which do not change the length of each $u_i$.
We can eliminate permutations and exponent sign changes of generators, so
each $\rho_i$ can be assumed to be an elementary Whitehead $F_A$-automorphism of $F_{A\cup V}$.
An easy inductive argument shows that then for each $i$, the image $Q^{\rho_1\dots\rho_i}$ 
(viewed as a cyclic word) is a quadratic
word equivalent to $Q$ and $(F_{A\cup \Var(Q)})^{\rho_i} = F_{A\cup \Var(Q)}$.
This implies that $(F_{A\cup \Var(Q)})^{\phi} = F_{A\cup \Var(Q)}$.
\end{proof}

The lemma shows that as long as we have a commutative diagram \eqref{eq:param-diagram} where 
$\phi \in \Stab(Q)$ then 
then we can restrict it to $F_{A\cup \Var(Q)}$:
$$
  \xymatrix{
    F_{A\cup \Var(Q)} \ar[r]^{\ti\phi} \ar[d]_\eta &
    F_{A \cup \Var(Q)} \ar[d]^{\ti\th} \\
    F_{A\cup T} &
    F_{A\cup T} \ar[l]_\om
  }
$$
This means precisely that $\ti\th$ is a generalization of $\eta$. 
The proof of Theorem \ref{thm:param-sol-bound} is finished in the case of a 
standard orientable quadratic equation $Q=1$.

The case when $Q$ is semi-standard non-orientable is treated in a similar way with the 
following changes:
\begin{itemize}
\item 
The word $R^*$ under transformation has the form 
$$
  R^* = 
  R^{-1} z_{\si(1)}^{-1} c_{\si(1)}^{\ep_1} z_{\si(1)} \dots z_{\si(m)}^{-1} c_{\si(m)}^{\ep_{m}} z_{\si(m)}.
$$
\item
After Step 1, the equation gets the form
$$
  R = z_1^{-1} c_1^{\ep_1} z_1 \dots z_m^{-1} c_m^{\ep_m} z_m
$$
for some $\ep_1,\dots,\ep_m = \pm1$.
\item
In Step 2, we reduce $R$ to a semi-standard form.
\item
In Step 3, we reduce $W$ to a semi-standard form 
$$
  y_1^2 \dots y_k^2 [y_{k+1}, y_{k+2}] \dots [y_{r-1},y_r]
$$
where each $y_i$ in $y_i^2$ is degenerate and either $y_i$ or $y_{i+1}$ is degenerate in 
each commutator $[y_i, y_{i+1}]$. 
After the reduction, we move $W$ to the right of the last square factor in ~$\bar R$ so that
the whole word $R$ would be written in a semi-standard form.
\item
Before Step 4, we apply Lemma \ref{lm:semistandard-transition} to get $R^* = Q$.
\end{itemize}

We prove the theorem in the general case. Let
$Q=1$ be any quadratic equation in $F_A$.

Let $\phi \in \Aut_{F_A}(F_{A\cup \Var(Q)})$ be an automorphism given by 
Propositions ~\ref{prop:standard-red-bound-orientable} or ~\ref{prop:semistandard-red-bound-nonorientable} 
such that $Q^\phi$ is conjugate to a standard or semi-standard
quadratic word $R$ equivalent to $Q$. 
Then $\phi$ defines a one-two-one correspondence $\eta \mapsto \phi\eta$ between 
parametric solutions $\eta$ of the equation $(R=1,\Var(Q))$ with formal set of variables $\Var(Q)$ and
parametric solutions $\phi\eta$ of the equation $Q=1$. 

Recall that a parametric solution $\eta_1: F_{A\cup \Var(E)} \to F_{A\cup T}$ of $E=1$ is
a generalization of another parametric solution $\eta_2$ of the same equation $E=1$ 
if there are an automorphism $\psi \in\Stab(E)$ and an endomorphism $\om \in \End_{F_A} (F_{A \cup T})$
such that $\eta_2 = \psi\eta_1 \om$. We extend this definition to parametric solutions of equations
$(E=1,V)$ with formal set of variables by taking instead of $\Stab(E)$ the group
$$
  \Stab(E,V) = \setof{\psi \in \Aut_{F_A} (F_{A \cup V})}{E^\psi \text{ is conjugate to } E}.
$$
Clearly, the correspondence $\eta \mapsto \phi\eta$ preserves the relation ``$\eta_1$ is 
a generalization of $\eta_2$'' in the new extended version. 

For a parametric solution $\th$ of $R=1$, let $\hat\th$ denote the parametric solution of $(R=1,\Var(Q)$
defined by extending $\th$ in the following natural way: for each variable $x \in \Var(Q)\sm\Var(R)$ we choose
a parameter letter $t$ (which does not occur in parametric words $y^\th$ for $y \in \Var(R)$)
and set $x^{\hat\th} = t$. Clearly, any parametric solution of $(R=1,\Var(Q))$ has a generalization of
the form ~$\hat\th$ for some ~$\th$. 
It is also obvious that the correspondence $\th \mapsto \hat\th$ 
preserves the relation ``$\th_1$ is a generalization of $\th_2$''.

Now let $\eta$ be an arbitrary parametric solution of $Q=1$. To find the required 
generalization ~$\eta_1$ of $\eta$, we
pass on to the parametric solution
$\psi^{-1} \xi$ of $(R=1,\Var(Q))$ and take its generalization of the form ~$\hat\th$ for some $\th$. 
By the statement of the theorem for the equation $R=1$, there is a generalization $\th_1$ of $\th$
such that for any $x \in \Var(R)$
$$
  |x^{\th_1}| \le 
  \begin{cases}
   8 (c(R) + 2 n(R)) & \text{if $Q$ is orientable,} \\
   36 n(R) (c(R) + 2 n(R)) & \text{if $Q$ is non-orientable.}
  \end{cases}
$$
We take $\eta_1 = \psi\hat\th_1$. By the bounds on $\psi$ in 
Propositions 
~\ref{prop:standard-red-bound-orientable} or ~\ref{prop:semistandard-red-bound-nonorientable}
(and using inequalities $n(R)\le n(Q)$ and $c(R)\le c(Q)$),
for any $x \in \Var(Q)$ we have
\begin{align*}
  |x^{\eta_1}| &\le 4 \sum_{x \in \Var(Q)} |x^{\hat\th_1}| + 2 c(Q) \\
	      & \le
  \begin{cases}
   34 n(R) (c(R) + 2 n(R)) & \text{if $Q$ is orientable,} \\
   146 n(R)^2 (c(R) + 2 n(R)) & \text{if $Q$ is non-orientable.}
  \end{cases}
\end{align*}
as required.
\end{proof}

%\bibliography{../groups}

\end{document}